\newcommand{\ass}{\quad\mbox{as}\quad}
\newcommand{\inn}{{\quad\hbox{in } }}
\newcommand{\ttt}{\tilde }
\newcommand{\TT}{{\mathcal T}  }
\newcommand{\nn}{ {\nabla}  }
\newcommand{\pp}{ {\partial} }
\newcommand{\vp}{\varphi}
\newcommand{\R} {\mathbb R}
\newcommand{\cuad}{{\sqcap\kern-.68em\sqcup}}
\newcommand{\DD}{{\mathcal D}}
\newcommand{\foral}{\quad\mbox{for all}\quad}
\newcommand{\ve}{\varepsilon}
\newcommand{\be}{\begin{equation}}
\newcommand{\ee}{\end{equation}}
\newcommand{\la}{\lambda}
\newcommand{\equ}[1]{(\ref{#1})}
\newtheorem{lemma}{Lemma}[section]
\newtheorem{prop}{Proposition}[section]
\newtheorem{theorem}{Theorem}
\newtheorem{corollary}{Corollary}
\newtheorem{remark}{Remark}[section]
\newcommand{\bremark}{\begin{remark} \em}
\newcommand{\eremark}{\end{remark} }
\numberwithin{equation}{section}
\begin{document}

\title[Bubble towers in the critical heat equation]{Existence and stability of infinite time bubble towers in the energy critical heat equation}

\author[M. del Pino]{Manuel del Pino}
\address{\noindent   Department of Mathematical Sciences University of Bath,
Bath BA2 7AY, United Kingdom \\
and  Departamento de
Ingenier\'{\i}a  Matem\'atica-CMM   Universidad de Chile,
Santiago 837-0456, Chile}
\email{m.delpino@bath.ac.uk}

\author[M. Musso]{Monica Musso}
\address{\noindent   Department of Mathematical Sciences University of Bath,
Bath BA2 7AY, United Kingdom}
\email{m.musso@bath.ac.uk}

\author[J. Wei]{Juncheng Wei}
\address{\noindent  Department of Mathematics University of British Columbia, Vancouver, BC V6T 1Z2, Canada
}  \email{jcwei@math.ubc.ca}

\begin{abstract} We consider
the energy critical heat equation in $\R^n$ for $n\ge 7$
$$\left\{
\begin{aligned}
u_t   & = \Delta u+ |u|^{\frac 4{n-2}}u   \inn\ \R^n \times (0, \infty),  \\
u(\cdot,0) &   = u_0  \inn \R^n,
\end{aligned}\right.
$$
which corresponds to the $L^2$-gradient flow of the Sobolev-critical energy
$$
J(u) = \int_{\R^n} e[u] , \quad  e[u] :=  \frac 12 |\nn u|^2  -   \frac {n-2}{2n} |u|^{\frac {2n}{n-2} }.
$$
Given any $k\ge 2$ we find an initial condition $u_0$ that leads
to sign-changing  solutions with {\em multiple blow-up at a single point} (tower of bubbles) as $t\to +\infty$.
 It has the form
of a superposition with alternate signs of singularly scaled {\em Aubin-Talenti solitons},
$$
u(x,t) =  \sum_{j=1}^k  (-1)^{j-1} {\mu_j^{-\frac {n-2}2}} U \left( \frac {x}{\mu_j} \right)\,  +\,  o(1) \ass t\to +\infty
$$
where $U(y)$ is the standard soliton
 $
 U(y)  =   
 \alpha_n\left ( \frac 1{1+|y|^2}\right)^{\frac{n-2}2}$
and
 $$\mu_j(t) = \beta_j t^{- \alpha_j}, \quad  \alpha_j =    \frac 12   \Big ( \, \left( \frac{n-2}{n-6}\right)^{j-1} -1 \Big). $$
Letting $\delta_0$ the Dirac mass, we have energy  concentration of the form
$$
e[ u(\cdot, t)]- e[U]  \rightharpoonup     (k-1) S_n\,\delta_{0} \ass t\to +\infty
$$
where $S_n=J(U)$.
The initial condition can be chosen radial and compactly supported. We establish the codimension $k+ n (k-1)$ stability of
this phenomenon for  perturbations of the initial condition that have
 space decay $u_0(x) =O( |x|^{-\alpha})$, $\alpha > \frac {n-2}2$, which yields finite energy of the solution.


\end{abstract}

\maketitle

\section{Introduction}
This paper deals with the analysis of solutions that exhibit {\em infinite time blow-up} in the energy critical heat equation
\be\left\{
\begin{aligned}
u_t   & = \Delta u+ |u|^{p-1}u   \inn\ \R^n \times (0, \infty),  \\
u(\cdot,0) &  =u_0 \inn \R^n
\end{aligned}\right.
\label{F}\ee
where $n\ge 3 $ and $p$ is the critical Sobolev exponent
$p= \frac{n+2}{n-2}$.
We are interested in solutions $u(x,t)$ globally defined in time such that
$$ \lim_{t\to +\infty } \| u(\cdot ,t)\|_{L^\infty (\R^n)} = + \infty.$$
The {\em energy functional} associated to \equ{F} is the functional
$$
J(u) = \int_{\R^n} e[u] , \quad e[u]:=  \frac 12|\nn u|^2  -   \frac 1{p+1}|u|^{p+1}
$$
which represents a Lyapunov functional for \equ{F} in the sense that $t\mapsto J(u(\cdot ,t)) $ is decreasing. In fact for a solution globally defined in time we must have $J(u(\cdot ,t))\ge 0$ for all $t$ and hence the value $\lim_{t\to+\infty }  J(u(\cdot, t) )$ exists and it is
nonnegative.

\medskip
 The behavior at infinity for
finite energy solutions is of course connected to steady states, namely solutions of the Yamabe equation
 \be\label{Y}
 \Delta u  +  |u|^{\frac 4{n-2}}u = 0 \inn \R^n.
 \ee
 $u(\cdot,t)$ as $t\to +\infty$ is along sequences $=t_n \to+\infty$, of Palais-Smale type for the energy $J$. 
 An application of the classical Struwe's profile decomposition \cite{struwe} (in a form given in \cite{gerrard})
tells us that passing to a subsequence,  there are finite energy solutions $U_1$, \ldots, $U_k$ of \equ{Y}, positive scalars $\mu_j(t) $ and points
$\xi_j(t)$
such that  for $i\ne j$,
$$
 \Big |\log \frac{\mu_1}{\mu_j}(t)\Big|   + \frac {\xi_i-\xi_j} {\mu_i}(t) \to +\infty \ass t=t_n\to +\infty
$$
and
\be \label{PP} u(x,t)  =   \sum_{j=1}^k  \frac 1{\mu_j(t)^{\frac{n-2}2}} U_j \left (\frac{x-\xi_j(t) } {\mu_j(t)}  \right)   + o(1)\ass t=t_n\to +\infty   \ee

This information is vague, since no information can be directly drawn from the centers and the scaling parameters. Even worse, the steady states could in principle depend on the particular sequence chosen. It is therefore a natural question to understand in which precise ways a profile decomposition like
\equ{PP} can take place as well as its stability properties.

\medskip
The purpose of this paper is to exhibit a family of solution whose soliton resolution is made out of least energy steady states, all centered
at a single point, thus exhibiting multiple blow-up at distinct rates in the form of a ``tower of bubbles''. The solutions we build here are presumably the unique soliton resolutions
possible in the radial case, but this is not known. We analyze stablity of this phenomenon, establishing its universality under small finite energy perturbations.

\medskip
 We recall that all
 positive entire solutions of the equation
 are given by the family of {\em Aubin-Talenti solitons}
 \be
 U_{\mu,\xi} (x)  =   \mu^{-\frac {n-2}2} U\left (\frac {x-\xi}{\mu}   \right )
 \label{bubbles} \ee
 where $U(y)$ is the {\em standard bubble soliton}
 \be\label{U}
 U(y)  =   \alpha_n \left ( \frac 1{1+|y|^2}\right)^{\frac{n-2}2} , \quad \alpha_n = (n(n-2))^{\frac 1{n-2}}.
 \ee
The main characteristic of the critical exponent $p$ is that the energy functional is invariant under the scalings
$u_\mu(x) = \mu^{-\frac{n-2}2} u(\mu^{-1}x)$. In particular we have that
$ J(U_{\mu,\xi}) = J(U)=: S_n$.
The functions $U_{\mu,\xi}$ are steady states of \equ{F}.  In fact
  $\pm  U_{\mu,\xi}$  are precisely the least energy nontrivial solutions of \equ{Y}.
  The only radial solutions of \equ{Y} are given by the functions  $\pm  U_{\mu,0}$. In particular for a radially symmetric solution of \equ{F},
  decomposition \equ{PP} would read as a ``tower of bubbles'' of the form
\be \label{PP1} u(x,t)  =   \sum_{j=1}^k  \frac {\sigma_j}{\mu_j(t)^{\frac{n-2}2}} U \left (\frac{x } {\mu_j(t)}  \right)   + o(1)\ass t\to +\infty.   \ee
where  $\sigma_j\in \{-1,+1\}$ and $\mu_k(t)\ll \cdots\ll \mu_1(t)$. In fact, H. Matano and F. Merle have obtained that in \equ{PP1} signs are alternate: either  $\sigma_j = (-1)^j$ for all $j$ or $= (-1)^{j-1}$ for all $j$ \cite{merle}.

\medskip
In this paper we construct for each given $k\ge 2$ a solution  of \equ{F} with profile decomposition \equ{PP1} and $\sigma_j= (-1)^{j-1}$.
and analyze its stability in the class of all non-radial functions.


\begin{theorem} \label{teo1} Let $n\ge 7$, $k\ge 1$. There exists
a radially symmetric initial condition $u_0(x)$ such that the solution of Problem $\equ{F}$ blows-up in infinite time  exactly at $0$
with a profile of the form
\be
u(x,t) =  \sum_{j=1}^k  (-1)^{j-1} {\mu_j^{-\frac {n-2}2}} U \left( \frac {x}{\mu_j} \right)\,  +\,  o(1) \ass t\to +\infty
\label{forma}\ee
and for certain positive numbers $\beta_j$, $j=1,\ldots, k$ we have
\be
 \mu_j(t)\ =\  \beta_ j t^{- \alpha_j }\, (1+ o(1))  ,
\label{aaaa}\ee
where
$$\, \alpha_j  \, =\, \frac 12  \left( {n-2 \over n-6}\right)^{j-1} -\frac 12  , \quad j=1,\ldots, k.  $$

\end{theorem}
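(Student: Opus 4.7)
I would follow the now-standard inner--outer parabolic gluing scheme developed in earlier work of the authors on infinite-time blow-up for critical equations, adapted here to a tower of alternating-sign bubbles concentrated at a single point. Start with the approximate solution $U^*(x,t):=\sum_{j=1}^k (-1)^{j-1}\mu_j(t)^{-(n-2)/2}U(x/\mu_j(t))$ with $\mu_j(t)$ to be determined, and compute the error $\mathcal{E}:=-\partial_t U^*+\Delta U^*+|U^*|^{p-1}U^*$. Since each bubble is a steady state, the error comes solely from the time derivatives (producing a sum of scaling modes $\sum_j(-1)^{j-1}\dot\mu_j Z_{n+1,\mu_j}$) and from the cross-terms of the nonlinearity. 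Near bubble $j$, linearization of $|U^*|^{p-1}U^*$ around $(-1)^{j-1}U_{\mu_j}$ yields the dominant interaction source $p\,U_{\mu_j}^{p-1}\sum_{i\ne j}(-1)^{i-1}U_{\mu_i}$, whose leading contribution comes from the next larger bubble $U_{\mu_{j-1}}$, which on the scale $|x|\sim\mu_j$ is the almost-constant $\alpha_n\mu_{j-1}^{-(n-2)/2}$.

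Testing $\mathcal{E}$ against each scaling mode $Z_{n+1,\mu_j}$ and computing the projection after rescaling $y=x/\mu_j$ produces a reduced ODE system of the schematic form
\be
\label{redODEplan}
c_0\,\dot\mu_j(t)\ =\ c_n\,\mu_j^{(n-4)/2}\,\mu_{j-1}^{-(n-2)/2}\,+\,\text{l.o.t.},\quad j=1,\ldots,k,
\ee
with the convention $\mu_0\equiv+\infty$ (no contribution for $j=1$, so that $\mu_1$ is almost constant). Substituting $\mu_j(t)=\beta_j t^{-\alpha_j}$ converts \equ{redODEplan} into the recurrence $(n-6)\alpha_j=(n-2)\alpha_{j-1}+2$ with $\alpha_1=0$, whose explicit solution is $\alpha_j=\tfrac12\bigl(((n-2)/(n-6))^{j-1}-1\bigr)$; the $\beta_j$ are then obtained algebraically from $(\beta_{j-1})$ and the ratio $c_n/c_0$. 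The restriction $n\ge 7$ enters here: it guarantees $(n-2)/(n-6)>1$, so that the scales are strictly ordered $\mu_k\ll\cdots\ll\mu_1$ and the exponents $\alpha_j$ are increasing and positive for $j\ge 2$. The alternating sign pattern $(-1)^{j-1}$ is precisely what produces the correct sign of $c_n$ so that the $\mu_j$ are decreasing (attractive interaction).

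Next, write $u=U^*+\varphi$ and decompose $\varphi=\psi(x,t)+\sum_{j=1}^k\eta_j(x,t)\,\phi_j(x/\mu_j(t),t)$, with $\eta_j$ a smooth cutoff supported in $|x|\lesssim R\mu_j$, $R$ large. The resulting equation splits into an outer problem for $\psi$, to be solved in a weighted $L^\infty$-space with spatial weight $|x|^{-a}$ for some $(n-2)/2<a<n-2$ and polynomial decay in time, and $k$ inner problems
$$\mu_j^2\,\partial_t\phi_j\,=\,\Delta_y\phi_j+pU(y)^{p-1}\phi_j+h_j(y,t)\foral y\in\RR^n,$$
whose right-hand sides $h_j$ contain the localized error and the inner--outer coupling. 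The radial linearization $\mathcal{L}_U:=\Delta+pU^{p-1}$ has one-dimensional kernel spanned by $Z_{n+1}(y)=-\tfrac{n-2}{2}U(y)-y\cdot\nabla U(y)$, and the invertibility theory for $\partial_\tau-\mathcal{L}_U$ in polynomially weighted $L^\infty$-norms delivers a solution $\phi_j$ as soon as the orthogonality $\int_{\RR^n}h_j(\cdot,t)Z_{n+1}=0$ is imposed. This solvability condition is precisely \equ{redODEplan}, with small $\varphi$-dependent corrections. Taking $t_0$ large and running a contraction mapping on the tuple $(\psi,\phi_1,\ldots,\phi_k,\mu_1,\ldots,\mu_k)$ over $t\in(t_0,\infty)$ produces the solution, with $u_0:=u(\cdot,t_0)$ automatically radial; compact support for $u_0$ is then obtained by a truncation beyond $|x|\gg\mu_1(t_0)$ together with continuous dependence.

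The hardest step is closing the fixed-point loop between the linearized ODE for $(\mu_j)$ and the inner/outer linear solvers. The reduced system \equ{redODEplan} admits $k$ distinct decay rates $\alpha_j$ that must be simultaneously preserved under perturbation; invertibility of the linearized ODE by a right inverse bounded in appropriate polynomial time-weighted norms, together with compatibility of those weights with the ones used in the $\psi$- and $\phi_j$-estimates, is what ultimately pins down the numerology leading to the exponents $\alpha_j$. This is the parabolic analog of the spectral-gap analysis in the elliptic tower-of-bubbles constructions, and the dimensional threshold $n\ge 7$ appears exactly at the level of requiring the power-law scales $t^{-\alpha_j}$ to dominate the errors produced by the nonlinear interactions at every level of the tower.
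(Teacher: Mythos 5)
Your plan matches the paper's strategy in all essential respects: the raw tower ansatz, identification of the leading cross‐interaction $p\,U_{\mu_j}^{p-1}U_{\mu_{j-1}}(0)$, the solvability/orthogonality condition giving $\mu_j\dot\mu_j=-c\,(\mu_j/\mu_{j-1})^{(n-2)/2}$ and thence the recurrence $(n-6)\alpha_j=(n-2)\alpha_{j-1}+2$ with $\alpha_1=0$, the alternating signs as the source of $c>0$, and the inner--outer gluing closed by a fixed point. The one organizational device in the paper that you skip is the elliptic corrector $\varphi_0=\sum_{j\ge2}\varphi_{0j}\chi_j$, which pre-solves the $\mathcal L_U$-equation sourced by $-\partial_t U_j+f'(U_j)U_{j-1}(0)$ at the fixed rates $\vec\mu_0$; this reduces the inner source from size $\lambda_j^{(n-2)/2}$ to size $\lambda_j^{(n-2)/2}t^{-\sigma}$ (the paper's $D_j[\vec\mu_1,\vec\xi]$), which is what makes the weights $\|\cdot\|_{j,\sigma}$ and $\|\cdot\|_{*,a,\sigma',\beta'}$ compatible across the inner/outer/ODE loop. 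If you keep the raw tower you have to absorb the full $\lambda_j^{(n-2)/2}$ source directly in the inner linear theory, which still determines the same ODE but makes the smallness bookkeeping on the fixed-point map substantially harder to close; building in the corrector is the cleaner route.
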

Radial symmetry is not necessary in our construction.
In fact, a by-product of the proof is a {\em codimension $k+ n (k-1)$}-stability result:
the bubble-tower phenomenon persists for initial data chosen in a {\em codimension $k+ n (k-1)$}-manifold of finite-energy perturbations.
Let $u_0(x)$ be the radial initial condition for the solution in Theorem \ref{teo1}.
There exist  smooth,  compactly supported radial functions
$\omega_\ell (x), \ \ell =1,\ldots , \  N_k =k+ n (k-1)  $
 such that the following property holds.

\begin{corollary} \label{coro1}
Let $\alpha > \frac{n-2}2$.
Then for any sufficiently small $\delta>0$ and any $n$-symmetric function
function $z_*(x)$    such that
\be\label{decay}  |z_*(x)| \ \le \   \frac \delta {1 +|x|^\alpha}   \ee
there exist $N_k$ scalars $c_\ell  (z_*)$ and a point $q= q(z_*) \in \R^n$ with $|c_i| + |q|= O(\delta)$,
such that the solution $u(x,t)$ of problem $\equ{F}$ with initial condition
\be u(x,0)\, = \, u_0(x)\, +\, z_*(x)  \,+\, \sum_{\ell =1}^{N_k} c_\ell  (z_*)\, \omega_\ell  (x)  \label{u1}\ee
is globally defined in time and has the  expansion $\equ{forma}$ where $\mu_j(t)$ is as in $\equ{aaaa}$.

\end{corollary}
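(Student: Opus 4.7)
The plan is to extend the inner-outer gluing construction that proves Theorem \ref{teo1} to non-radial perturbations. Write
$$
u(x,t) = \sum_{j=1}^k (-1)^{j-1} \mu_j(t)^{-\frac{n-2}{2}} U\Big( \frac{x - \xi_j(t) - q}{\mu_j(t)} \Big) + \phi(x,t),
$$
with scales $\mu_j(t)>0$, centers $\xi_j(t)\in\R^n$, and a global translation $q\in\R^n$ treated as a free parameter. Split the remainder as $\phi = \psi + \sum_{j=1}^k \eta_j \tilde\phi_j$, where $\eta_j$ is a cutoff localising to the scale $\mu_j$, $\tilde\phi_j(y,t)$ is an inner correction in the coordinates $y=(x-\xi_j-q)/\mu_j$, and $\psi(x,t)$ is a global outer correction which at $t=0$ must match $z_* + \sum_\ell c_\ell\omega_\ell$ (together with the mismatch of the bubble ansatz at $t=0$). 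Problem \equ{F} then splits into an outer parabolic equation for $\psi$, $k$ inner equations for the $\tilde\phi_j$, and a nonlinear ODE system for $(\mu_j,\xi_j)$ imposed by Fredholm solvability on the inner problems.

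\medskip

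The inner linearised operator $L_0=\Delta+pU^{p-1}$ has an $(n+1)$-dimensional bounded kernel spanned by the scaling mode $Z_0=\frac{n-2}{2}U+y\cdot\nn U$ and the translation modes $Z_i=\partial_{y_i}U$. Requiring $L^2$-orthogonality of each inner right-hand side to $Z_0,\ldots,Z_n$ at every time $t$ produces the $(n+1)k$ ODEs for $(\mu_j,\xi_j)$, which admit the particular solution of Theorem \ref{teo1} with $\xi_j\equiv 0$ and $\mu_j=\beta_j t^{-\alpha_j}$. Linearising the system at this solution reveals an $(n+1)k$-dimensional unstable manifold at $+\infty$; of these directions, the $n$ corresponding to a joint translation of the entire configuration are absorbed by the free parameter $q$, leaving exactly $N_k = k + n(k-1)$ genuine obstructions — the $k$ individual rescalings and the $n(k-1)$ relative translations of the bubbles — which must be neutralised by the $c_\ell$.

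\medskip

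Choose the $\omega_\ell$ so that the matrix of projections of the heat evolutions $e^{t\Delta}\omega_\ell$ onto the $N_k$ dangerous directions is invertible; smooth, compactly supported radial bumps concentrated near suitable length scales suffice. For each small $(c_\ell,q,z_*)$ with $z_*$ satisfying \equ{decay}, run the contraction-mapping argument of Theorem \ref{teo1} in weighted norms on the triple $(\psi,\tilde\phi_j,(\mu_j,\xi_j))$. The hypothesis $\alpha>\frac{n-2}{2}$ is exactly what places $z_*$ in the outer weighted space with pointwise decay matching the tail of the slowest bubble $\mu_1$ and implying finite energy, so the outer linear theory developed for Theorem \ref{teo1} applies verbatim. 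The fixed point yields a solution modulo $(n+1)k$ residual obstructions depending in a $C^1$ fashion on $(c_\ell,q)$, with linearisation at the origin invertible by the choice of the $\omega_\ell$ and by the non-degeneracy of spatial translation. The implicit function theorem then produces unique small $c_\ell(z_*)$ and $q(z_*)=O(\delta)$ killing them, and the resulting $u$ inherits the expansion \equ{forma}--\equ{aaaa}.

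\medskip

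The principal obstacle is simultaneously maintaining the correct spatial decay of the outer correction $\psi$ — so that its contribution does not destroy the tail structure of the bubble tower — and establishing the non-degeneracy of the finite-dimensional reduction. The former requires a weighted parabolic theory for the heat equation with potential $pU^{p-1}$ perturbed by a sign-changing multi-scale ansatz, in which the $|x|^{-\alpha}$ decay of $z_*$ propagates uniformly as $t\to\infty$. The latter amounts to invertibility of a nearly triangular $N_k\times N_k$ linear map coupling the $k$ scaling projections and the $n(k-1)$ relative translation projections; this reduces to the distinctness of the rates $\alpha_j$ and a careful bookkeeping of cross-projections between bubbles of widely separated scales. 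Once both points are settled — they are natural extensions of the analysis already carried out for Theorem \ref{teo1} — Corollary \ref{coro1} follows from the same fixed-point framework.
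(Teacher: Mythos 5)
Your overall strategy is the paper's: inner--outer gluing, treating the global translation $q$ as a free parameter absorbing $n$ directions, a fixed point argument, and then the implicit function theorem to solve for $(c_\ell,q)$. The count $N_k = k + n(k-1)$, split into $k$ scaling-type constraints and $n(k-1)$ relative-translation constraints, is also right. But the mechanism you give for the $k$ scaling obstructions is not the correct one, and this is a genuine gap.

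You attribute all $(n+1)k$ potential obstructions to orthogonality of the inner right-hand sides to the \emph{bounded kernel} of $L_0 = \Delta + pU^{p-1}$ (you call the scaling element of the kernel $Z_0$; the paper calls it $Z_{n+1}$), and you claim the resulting ODE system for $(\mu_j,\xi_j)$ has an $(n+1)k$-dimensional unstable manifold. In the paper this is false for the scaling direction. The equation $d_{j,n+1}=0$ reduces, for $j\ge 2$, to $\dot\mu_{1j} + \frac{n-4}{2}\,\frac{\alpha_j}{t}\,\mu_{1j} + M_{j,n+1}=0$, a dissipative ODE whose homogeneous solutions decay like $t^{-\frac{n-4}{2}\alpha_j}$; the paper solves it \emph{forward} with $\mu_{1j}(t_0)=0$, so it contributes \emph{no} constraint. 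The $k$ scaling constraints have an entirely different parabolic origin: $L_0$ has a unique positive eigenvalue $\lambda_0>0$ with a positive, exponentially decaying eigenfunction (the paper's $Z_0$, which is \emph{not} in the bounded kernel). In the rescaled time $\tau$ the $Z_0$-projection $p(\tau)$ of each inner solution obeys $\frac{dp}{d\tau} - \lambda_0\, p = q(\tau)$, whose only bounded solution forces the initial $Z_0$-projection to equal $\ell_j = \int_{\tau_0}^\infty e^{-\lambda_0 s}\langle h_j(\cdot,s),Z_0\rangle\,ds$, a linear functional of the remaining data. These $\ell_j$ are the $k$ scaling constraints, and the corresponding $\omega_j$ are cutoff, rescaled copies of this positive eigenfunction $Z_0$ placed at each scale $\mu_j^0(t_0)$ — not generic radial bumps chosen to make a projection matrix invertible. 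Since your proposal never mentions $\lambda_0$ or its eigenfunction, the correction directions you would pick for the $k$ scaling obstructions would be wrong and the reduced finite-dimensional problem would not close. Your account of the $n(k-1)$ translation constraints (the $\xi_j$ equations are integrated backward from $t=+\infty$, fixing $\xi_j(t_0)$ and yielding the $\nabla\tilde\omega_j$ corrections) does agree with the paper.
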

Thus in order for a small perturbation of $u_0$ to lead to a $k$-bubble tower as $t\to +\infty$  it should
satisfy the $k+ n (k-1) $ scalar constraints $c_\ell (z_*) = 0$. These constraints define $C^1$-functionals in the natural topology for $z_*$ with linearly independent differentials at $z_*=0$, and hence a local $C^1$ manifold with codimension $k+ n (k-1)$  (see Remark \ref{rem}).
   Condition  $\alpha > \frac {n-2}2$ in \equ{decay} is
 sharp to obtain finite energy of the initial condition \equ{u1}, namely
$J(u(\cdot ,0)) <+\infty$. (Finite-time blow-up is expected when $\alpha \leq \frac{n-2}{2}$. See \cite{fk}.)

\medskip
 Theorem \ref{teo1} for $k=1$ is actually trivial, just taking $$u(x,t)= U(x).$$  We observe that corollary \ref{coro1} essentially recovers the 1-codimensional stability of steady states in the energy space established for $n\ge 7$ in \cite{cmr}.

\medskip
  The formal analysis in \cite{fk} yields that no infinite time blow-up of positive solutions of \equ{F} should be present in dimensions $n=5$ or higher, while it should be possible for $n=3,4$. We rigorously established this for $n=3$ in \cite{dmw}. Blow-up by bubbling in finite time for \equ{F} was formally analyzed in \cite{fhv} and rigorous constructions achieved in
 \cite{schweyer,dmw1}. Global unbounded solutions like in Theorem \ref{teo1} are regarded as ``threshold solutions'' for the dynamics of \equ{F}. We refer to \cite{fk, PY1, PY2, qs} and references therein.

 \medskip
 The solutions built in this paper seem to be the first examples of {\em multiple blow-up at a single point} in Problem \equ{F}.
  Related phenomena has been detected in the elliptic Brezis-Nirenberg problem  $\Delta u + |u|^{\frac 4{n-2}}u + \la u  =0$ in a ball. See
  \cite{iv1,iv2} and also \cite{ddm,gjp,mp} for multiple bubbling in the slightly supercritical case. In the parabolic critical setting a construction of ancient solutions with multiple blow-up backwards in time for the Yamabe flow was achieved in \cite{dds}.

\medskip
Blow-up by bubbling (time dependent, energy invariant, asymptotically singular scalings of steady states) is a phenomenon that arises in various problems of parabolic and dispersive nature. This has been an intensively studied topic in the harmonic map flow \cite{LW} and critical heat equations \cite{fhv, dmw1, mm1, mm2, qs}.
Profile decompositions of the type \equ{PP} are well-known to arise in dispersive contexts such as the energy critical wave equation
 \cite{BG, dkm, dkm2,dkm3, KS1}.  The actual classification problem  is known as the {\em soliton resolution conjecture}, see \cite{Cote, DJKM, KNS, Schuur, Tao1}. See also  \cite{DK,kenigmerle,kst, KS2, MR, RR} for related results. 
Constructions of two-bubble solutions in wave and Schr\"odinger energy critical equations under radial symmetry have been recently achieved in \cite{j2,j3,j4,j5}.

\medskip
The method of this paper is close in spirit to the analysis in the works
\cite{cdm,ddw,dmw,dmw1,dmw2}, where the {\em inner-outer gluing method} is employed. That approach consists of reducing the original problem to solving a basically uncoupled system, which depends in subtle ways on the parameter choices (which are governed by relatively simple ODE systems). The new challenge in this paper is to deal with drastic differences in blow-up rates at the same place. A novel topology for both inner and outer problems is introduced. See the norms (\ref{norm1*}) and (\ref{cotaphi}) below. The analysis of ``bubble towers'' in this paper may be useful in  energy critical geometric equations such as the harmonic map flow \cite{LW, Topping, vander} and also in dispersive settings like those mentioned above.

\medskip
Finally, we mention that the problem of blow-up in finite or infinite time for more general $p>1$ in \equ{F} is a classical subject after the seminal work by Fujita \cite{F}. Various different scenarios have been discovered or discarded in the supercritical case. See for instance \cite{collot4,collot1,dmw2,hv,hv1,mm2, MS} and the book \cite{qs}.

\medskip
The rest of this paper will be devoted to the proofs of the above results. We will simultaneously prove Theorem \ref{teo1} and Corollary \ref{coro1}. In \S \ref{s2} we will build a sufficiently accurate
 first approximation for the solution and estimate the error of approximation. In \S \ref{s3} we formulate an ansatz for
 the solution and the {\em inner-outer gluing system} for its unknown. In \S \ref{s4} we discuss the necessary linear theories, and finally solve the problem by means of a fixed point argument in \S \ref{s5}.

\medskip
Throughout this paper,  $\chi(s)$ will denote a smooth cut-off function such that \be \label{cut}
\chi (s)\, =\,  \begin{cases}1 & \hbox{ for }  s\leq 1, \\
0 & \hbox{ for }  s\geq 2 \end{cases} \ee
and, for a set $\Omega \subset \R^n$, ${\bf 1}_\Omega$ will denote the characteristic function defined as
\be \label{funo}
{\bf 1}_{\Omega} (x) \, = \,
\begin{cases} 1 & \hbox{ for }  x \in \Omega, \\
0 & \hbox{ for }  x \in \R^n \setminus \Omega \end{cases} .\ee
 \section{A first approximation and the ansatz}\label{s2}

In what follows we write  Problem \equ{F}  in the equivalent form

\be\left\{
\begin{aligned}
S[u]   \,:=\,  -u_t+ \Delta u+ f(u)  &= 0 \ \, \inn \R^n \times (t_0, \infty),  \\
u(\cdot,t_0) &  =u_0 \inn \R^n
\end{aligned}\right.
\label{F1}\ee
where
$$ f(u)=
|u|^{p-1}u= |u|^{\frac 4{n-2}} u $$ and the initial time $t_0>0$ is left as a parameter which will be later on taken sufficiently large.
The difference is just convenient cosmetics, since then the function $u(x,t+t_0)$ will solve the original problem \equ{F}.
We thus look for a solution $u(x,t)$ of \equ{F1} which looks like a tower of bubbles of the form \equ{forma} centered at $q_0=0$ as $t\to +\infty$.

\medskip
Let us consider $k\ge 2$,  $k$ positive functions
$$\mu_{k}(t) <  \mu_{k-1}(t) <\cdots  <   \mu_1(t)  \inn (t_0,\infty) $$
which will later be chosen, such that as $t\to +\infty,$
\be \label{defpar}
\mu_1(t)\to 1 , \quad
 \frac{\mu_{j+1}(t)}{\mu_{j}(t)}\to  0 \foral j=1,\ldots,k-1  .
\ee
Let us also consider $k$ points $\xi_j$, such that as $t \to +\infty$
\be \label{defpoint}
{|\xi_j (t)| \over \mu_j (t)} \to 0, \quad j=1, \ldots ,k.
\ee
Let us observe that these assumptions on $\mu_j (t)$ and $\xi_j (t)$ imply that
$$
\xi_1 (t) \to 0, \quad {\xi_{j+1} (t) - \xi_{j} (t) \over \mu_{j} (t) } \to 0 \foral j=1, \ldots ,k-1,
$$
a fact that will be used later on.
We denote in what follows
$$\vec \mu = (\mu_1,\ldots,\mu_k) \quad {\mbox {and}} \quad \vec \xi = (\xi_1 , \ldots , \xi_k) .
$$
Let us set
$
\bar \chi (x , t ) = \chi  \left({|x| \over \sqrt{t}} \right)
$
with $\chi(s)$ as in \equ{cut}.
Consistently with \equ{forma}, we write
\be\label{u*}
\bar U  \, = \,   
\bar \chi  \,  \sum_{j=1}^k U_j
\ee
where
$$ U_j (x,t) \, = \,  \frac{ (-1)^{j-1}} { \mu_j  (t)^{{n-2 \over 2}} }\, U\left({ x -\xi_j (t) \over \mu_j (t) } \right)
$$
and $U(y)$ is given by \equ{U}.
 We will get an accurate first approximation to a solution of \equ{F1} of the form  $\bar U  + \varphi_0$  that reduces the part of the error $S[\bar U ]$ created by
the interaction of the bubbles $U_j$ and $U_{j-1}$, $j=2,\ldots, k$. To get the correction $\vp_0$  we will need to fix
the parameters $\mu_j$ at main order around certain explicit values.

\medskip 	
Let us consider the geometric averages
$$
\bar \mu_j :=  \sqrt{\mu_j \mu_{j-1}}, \quad j=2,\ldots, k
$$
  and introduce  the cut-off functions
\be \label{defchij} \chi_j (x, t) \, =\, \left\{
\begin{aligned}
& \chi \left( {2 |x -\xi_j (t) | \over \bar \mu_j } \right) -\chi \left( { |x- \xi_j (t) | \over 2 \bar \mu_{j+1} } \right)  &j&=2, \ldots  , k-1, \\
& \chi \left( {2 |x- \xi_j (t) | \over \bar \mu_k } \right)  \  &j&=k.
\end{aligned}\right.
\ee
We observe that they have the property that
$$
\chi_j (x, t)\, =\, \left\{
\begin{aligned}
0  & \ \hbox{ if }\   \qquad\qquad |x - \xi_j (t)|\le   2 \bar \mu_{j+1},    \\
1 &  \  \hbox{ if }\   \ 4 \bar \mu_{j+1}\le   |x - \xi_j (t)|\le   \frac 12 \bar \mu_{j},\\
0  &  \  \hbox{ if }  \ \qquad\qquad |x - \xi_j (t)| \ge    \bar\mu_{j},
\end{aligned}\right.
$$
with the convention  
$\bar \mu_{k+1} = 0$.
We look for a correction $\vp_0$ of the form
\be \label{defvarphi0}
\varphi_0  =  \sum_{j=2}^k \varphi_{0j}  \chi_j  ,
\ee
where
$$
\varphi_{0j} (x,t ) = {(-1)^{j-1} \over \mu_j(t)^{n-2 \over 2} } \, \phi_{0j} \left({x - \xi_j (t)\over \mu_j (t) },t \right)
$$
for certain functions $\phi_{0j} (y,t)$ defined in entire $y\in \R^n$ which we will suitably determine.
Let us write
\begin{equation}\label{e0}
\begin{aligned}
S(\bar U+ \varphi_0 ) &= S(\bar U ) + {\mathcal L}_{\bar U}  [\varphi_0] + N_{\bar U} [\varphi_0 ]
\end{aligned}
\end{equation}
where
$$ 
 \begin{aligned}{\mathcal L}_{\bar U} [\varphi_0 ] = &-\pp_t \varphi_0 + \Delta_x \varphi_0 +  f'(\bar U) \varphi_0, \\
 N_{\bar U} [\varphi_0 ] = &f(\bar U +\vp_0) - f'(\bar U) \varphi_0 - f(\bar U).
\end{aligned}
$$
Using the homogeneity of the function $f$, we observe that
\be \label{e1}
\begin{aligned}
S(\bar U  ) &= -\sum_{j=1}^k \pp_t (\bar \chi   U_j )   + \bar \chi ^p \, f(\sum_{j=1}^k U_j ) - \bar \chi  \sum_{j=1}^k f(  U_j )   \\
&+ (\Delta_x \bar \chi) (\sum_{j=1}^k U_j ) + 2 (\nabla_x \bar \chi ) (\sum_{j=1}^k \nabla_x U_j)\\
&= \bar E_1 + \bar E_2
\end{aligned}
\ee
where
$$
\begin{aligned}
\bar E_1&= \bar \chi \left[ -\sum_{j=1}^k ( \pp_t U_j )  + f (\sum_{j=1}^k U_j ) - \sum_{j=1}^k U_j \right] \\
\bar E_2&= \left( \bar \chi^p - \bar \chi \right) f (\sum_{j=1}^k U_j ) + ( \Delta_x - \pp_t  ) (\bar \chi ) (\sum_{j=1}^k U_j ) + 2 (\nabla_x \bar \chi ) (\sum_{j=1}^k \nabla_x U_j ).
\end{aligned}
$$
We decompose $\bar E_1 $ in different regions defined by the cut-off functions $\chi_j$ introduced in \eqref{defchij} as follows
\begin{equation}\label{e11}
\begin{aligned}
\bar E_1 &= -(\pp_t U_1 ) \bar \chi + \sum_{j=2}^k \left[ -\pp_t  U_j + f' (U_j) U_{j-1} (0) \right] \chi_j + \bar E_{11}  ,
\end{aligned}
\end{equation}
where
\begin{equation}\label{e11n}
\begin{aligned}
\bar E_{11} &= \sum_{j=2}^k \left[ f' (U_j ) (\sum_{l\not= j, j-1} U_l ) + f' (U_j)  \big( U_{j-1} - U_{j-1}(0) \big) \right] \chi_j  \\
&+\sum_{j=2}^k \left[ N_{U_j} \big(\sum_{l\not= j, j-1} U_l \big) -\sum_{l\not= j} f( U_l ) \right] \chi_j   - \bar \chi \sum_{j=2}^k (1-\chi_j ) \pp_t U_j \\
&+  \bar \chi  \big[  f\big(\sum_{j=1}^k U_j \big)   - \sum_{j=1}^k  f (U_j)  \big]  \big(1-  \sum_{l=2}^k \chi_l \big)  ,
\end{aligned}
\end{equation}
with
$$N_{U_j} \big(\sum_{l\not= j, j-1} U_l \big) = f\big(\sum_{l=1}^k U_l \big) - f(U_j ) - f' (U_j) \big(\sum_{l\not= j} U_l \big) . $$

Next we write ${\mathcal L}_{\bar U } [\varphi_0]$ using the form of $\varphi_0$ in \eqref{defvarphi0} as follows
\begin{equation}\label{e2}
\begin{aligned}
  {\mathcal L}_{\bar U } [\varphi_0]&=  \sum_{j=2}^k \left[ \Delta_x \varphi_{0j} + f'(U_j) \varphi_{0j} \right] \chi_j  \\
  &+ \sum_{j=2}^k p (f'(\bar U) - f'(U_j) ) \varphi_{0j} \chi_j   + \sum_{j=2}^k \left[ 2 \nabla_x \varphi_{0j} \nabla_x (\chi_j ) + \Delta_x (\chi_j )  \varphi_{0j} \right] \\
  &- \sum_{j=2}^k \pp_t (\varphi_{0j} \chi_j ).
  \end{aligned}
  \end{equation}
Replacing \eqref{e1}, \eqref{e11}, \eqref{e2} into \eqref{e0}, and reorganizing properly the terms, we obtain
\begin{equation}\label{e3}
\begin{aligned}
S(\bar U+ \varphi_0) &= - \bar \chi   \pp_t U_1  +\sum_{j=2}^k \left[ \Delta_x \varphi_{0j} +  f'(U_j) \varphi_{0j} -\pp_t U_j + f' (U_j) U_{j-1} (0) \right] \chi_j  \\
& + \bar E_{11} + \bar E_2  + \sum_{j=2}^k p (f'(\bar U)- f'(U_j) ) \varphi_{0j} \chi_j   \\
& + \sum_{j=2}^k \left[ 2 \nabla_x \varphi_{0j} \nabla_x (\chi_j  ) + \Delta_x (\chi_j  ) \varphi_{0j} \right]
  - \sum_{j=2}^k \pp_t (\varphi_{0j} \chi_j  )  +  N_{\bar U} [\varphi_0 ], \\
\end{aligned}
\end{equation}
where $\bar E_{11}$ and $\bar E_2$ are defined respectively in \eqref{e11n} and \eqref{e1}.
The functions $\varphi_{0j}$ will be chosen to  eliminate at main order the terms in the first line of \equ{e3}, after conveniently restricting the range of variation of $\mu$ and $\vec \xi$,
\begin{equation}\label{e4}
\begin{aligned}
E_j [\vp_{0j}; \vec\mu, \vec \xi ] \ :=&\ \Delta_x \varphi_{0j} + f'(U_j) \varphi_{0j} -\pp_t U_j + f' (U_j) U_{j-1} (0) \\
=&\ {(-1)^{j-1} \over \mu_j^{n+2 \over 2}} \Big[ \Delta_y \phi_{0j} + p U (y)^{p-1} \phi_{0j} + \mu_j \dot \mu_j Z (y)  \\
&\ - p U^{p-1} (y) \left( \frac{\mu_j}{  \mu_{j-1} } \right)^{n-2 \over 2} U(0) +\mu_j \dot \xi_j \cdot \nabla U(y) \Big]_{y= \frac {x- \xi_j (t) }{\mu_j}}
\end{aligned}
\end{equation}
where 
$Z_{n+1} (y) = {n-2 \over 2} U(y) + y\cdot \nabla U(y). $
The elliptic equation (for a radially symmetric function $\phi(y)$)
\be\label{ss}
\Delta_y \phi  + p U(y)^{p-1}  \phi  + h_j(y,\mu) = 0  \inn \R^n
\ee
where $$h_j(y,\mu) = \mu_j \dot \mu_j Z_{n+1}  (y) -
p U (y)^{p-1} \left( \frac{\mu_j}{  \mu_{j-1} } \right)^{n-2 \over 2} U(0)$$
has a solution with $ \phi(y)\to 0 \ass |y|\to \infty$ if and only if $h_j$ satisfies the solvability condition
 $$\int_{\R^n} h_j(y,\mu)Z_{n+1} (y)\, dy = 0 .$$
The latter conditions hold if the parameters $\mu_j(t)$ satisfy  the following relations:
\be \label{systemmu0}
\mu_1=1, \quad  \mu_{j} \dot \mu_{j} =-c \la_j ^{n-2 \over 2} ,\quad \la_j = {\mu_j \over \mu_{j-1} } \foral   j=2, \ldots , k,
\ee
where
\be \label{defc}
c\,= \, - U(0) {p \int_{\R^n} U^{p-1} Z_{n+1}  \, dy \over \int_{\R^n} Z_{n+1}^2 dy} \, = \,   U(0)\frac{n-2}2 {  \int_{\R^n} U^p \, dy \over \int_{\R^n} Z_{n+1} ^2 dy} >0 .
\ee
 We  let  $\vec\mu_0 = ( \mu_{01}, \ldots \mu_{0k} )$ be the  solution of \equ{systemmu0} in $(t_0,\infty)$
given by
\be \label{mu0}
\mu_{0j} (t) =\beta_j t^{-\alpha_j} , \quad  t\in (t_0,\infty)
\ee where
$$
\alpha_j = {1\over 2} \left ({n-2 \over n-6} \right)^{j-1} - {1\over 2}, \quad j=1,\ldots, k
$$
and the numbers $\beta_j$ are determined by the recursive relations
$$
\beta_1 = 1, \quad  \beta_j =  \left({n-2 \over n-6} \alpha_{j-1}+ { 2 \over n-6} \right)^2 \,  \beta_{j-1}^{n-2 \over n-6}. \quad
$$
From \eqref{systemmu0}, we see that setting
$$
\la_{0j}(t) = {\mu_{0j} \over \mu_{0,j-1}}(t)
$$
we have
$$
 h_j(y,\mu_0) =
      \la_{0j}^{n-2 \over 2}\bar h(y), \quad   \bar h(|y|)=    c p U(0) U (y)^{p-1} +Z_{n+1}(y).
 $$
 Since $\int_{\R^n} \bar h Z_{n+1}  \, dy = 0$,  there exists a radially symmetric solution
 $\bar \phi(y)$ to the equation
$$
 \Delta \bar \phi + p U(y)^{p-1}  \bar \phi +\bar h(|y|) =0  \quad {\mbox {in}} \quad \R^n.
$$
 such that $ \bar\phi (y) = O(|y|^{-2})$ as $|y|\to +\infty$.
 Indeed, writing with some abuse of notation  $\bar\phi(y) = \bar\phi(|y|)$ the above equation becomes
 \be\label{k1}
 \mathcal L [\bar\phi] :=  \bar\phi''(\rho)  +  \frac {n-1}\rho \bar\phi'(\rho)   = - \bar h(\rho) , \quad \rho\in (0,\infty)
 \ee
We observe that   $\mathcal L [Z_{n+1} ]=0$ and that there is a second linearly independent $\ttt Z(\rho)$ with $\mathcal L [\ttt Z]=0$, with
$Z(\rho) = O(\rho^{2-n}) $ as $\rho\to 0$ and $\ttt Z(\rho) =O(1) $ as $\rho \to +\infty$, which we can choose so that
the variation of parameters formula
\be\label{formula}
\bar\phi(\rho)   =  \ttt Z(\rho)  \int_\rho^\infty\bar h(r) Z_{n+1} (r) r^{n-1}dr  +     Z_{n+1} (\rho)  \int_0^\rho \bar h(r) \ttt Z(r) r^{n-1}dr
\ee
gives a solution of \equ{k1}. Since $ \int_0^\infty\bar h(r) Z_{n+1} (r) r^{n-1}dr = 0 $ the above solution is regular at the origin and satisfies
$ \bar\phi (\rho) = O(\rho^{-2})$ as $\rho \to +\infty$.

\medskip
 Then we define $\phi_{0j}(y,t)$ as
 \be \label{defphi0j}
\phi_{0j} (y,t) =    \la_{0j}^{n-2 \over 2} \, \bar \phi (y).
\ee
Thus $\phi_{0j}$ solves equation \equ{ss}.

\medskip
In what follows we let the parameters $\mu_j(t)$  in \eqref{defpar} have the  form
$\vec \mu = \vec \mu_0 + \vec \mu_1 $ or
\be \label{muj0}
\mu_j(t) = \mu_{0j}(t) + \mu_{1j}(t), \quad
\ee
where the parameters  $\mu_{1j}(t)$ to be determined satisfy for some small and fixed $\sigma>0$
\be\label{assmu}
\mu_{0j}|\dot \mu_{1j} (t)| \  \le\    \la_{0j}(t)^{\frac{n-2}2 } t^{-\sigma}  , \quad
\ee
Condition \equ{assmu} implies
$$
\lim_{t \to \infty} {\mu_{1j} (t)  \over \mu_{0j} (t)  } = 0.
$$
We will also assume that the points $\xi_j$ in \eqref{defpoint} satisfy
$$
\mu_{0j}|\dot \xi_{j} (t)| \  \le\    \la_{0j}(t)^{\frac{n-2}2 } t^{-\sigma}  .
$$
It is  convenient to write
$$
\la_j(t) = {\mu_j \over \mu_{j-1} }(t) = \la_{0j}(t) + \la_{1j}(t), \quad j=2, \ldots , k.
$$
We observe that for some positive number $c_j$ we have
$$
\la_{0j} (t) =  c_jt^{-  {2\over n-6} \left({n-2 \over n-6}\right)^{j-2} }.
$$
With these choices, we have that $E_j[\vp_{0j}; \vec\mu_0, \vec 0 ] =0 $. The expression $E_j[\vp_{0j}; \vec \mu , \vec \xi ]$ in \eqref{e4} can be decomposed as
$$
\begin{aligned}
E_j[\vp_{0j}; \vec \mu_0 +\vec \mu_1, \vec \xi ] =&  \left[  \mu_j \dot \mu_j  - \mu_{0j} \dot \mu_{0j} \right]  Z_{n+1} (y_j)  -
p U^{p-1} (y_j) \left[ \la_j^{n-2 \over 2} - \la_{0j}^{n-2 \over 2} \right]  U(0) \\
+& \mu_j \dot \xi_j \cdot \nabla U(y_j) \\
=& \mu_j^{-\frac{n+2}2} D_j[\vec \mu_1  ]  +   \Theta_j [\vec \mu_1,\vec \xi], \quad   y_j =  { x -\xi_j (t) \over \mu_j (t) }  
 \end{aligned}$$
where for $j=2, \ldots , k$
\begin{equation}\label{e55}
\begin{aligned}
 D_j[\vec \mu_1  , \vec \xi  ] = & (\dot \mu_{0j} \mu_{1j} + \mu_{0j} \dot \mu_{1j} )  Z_{n+1} (y_j)  +    \frac{n-2}2 p U^{p-1} (y_j) U(0) \la_{0j}^{n-4 \over 2}   {\mu_{1j} \over \mu_{0, j-1} } + \mu_j \dot \xi_j \cdot \nabla U(y) ,\\
\Theta_j[\vec \mu_1,\vec \xi] =& - \pp_t (\mu_{1j}^2) Z_{n+1} (y_j) - p U^{p-1} (y_j) U(0) {n-2 \over 2} \la_{0j}^{n-2 \over 2}{\mu_{1j-1} \over \mu_{0j-1} }  \\
&\ + p U^{p-1} (y_j)\la_{0j}^{n-2 \over 2}\, O\big(  {\mu_{1j} \over \mu_{0j-1} } - {\mu_{1j-1} \over \mu_{0j-1} }\big)^2.
 \end{aligned}
\end{equation}
with this choice of $\mu_{0j}$, we observe that the first term in \eqref{e11n} takes the more explicit form
$$
- \bar \chi  \pp_t U_1 = {\bar \chi \over \mu_1^{n+2 \over 2} } (1+ \mu_{11} ) [ \dot \mu_{11} Z_{n+1} (y_1) + \dot \xi_1 \cdot \nabla U (y_1) ] , \quad y_1 = {x- \xi_1 (t) \over \mu_1 }.
$$
Define
\be \label{D0}
D_1 [\vec \mu_1, \vec \xi ] = \dot (1+ \mu_{11} ) [ \dot \mu_{11} Z_{n+1}(y_1) + \dot \xi_j \cdot \nabla U(y_1) ] , \quad y_1 = {x - \xi_1 (t) \over \mu_1 }.
\ee
We define our approximate solution to be given by $u_* = u_* [\vec \mu_1 , \vec \xi ]$ as
\be \label{defustar}
u_*  = \bar U + \varphi_0
\ee
where $\bar U$ is defined by \eqref{u*} and $\varphi_0$ has the form \eqref{defvarphi0}, with $\phi_{0j}$ defined by \eqref{defphi0j}, and $\mu_j$ defined by \eqref{muj0} and \eqref{mu0}.

\section{The inner-outer gluing system }\label{s3}
We consider the approximation $u_* =u_*[\vec\mu_1 , \vec \xi ]$ in \equ{defustar} built in the previous section and want to find a solution  of equation \equ{F1} in the form $u = u_* +\vp $.
The problem becomes
\be\left\{
\begin{aligned} &S[u_*+\vp] = \\
&-\vp_t  + \Delta \vp  +    f'(u_*) \vp   +    N_{u_*}[\vp] +  S[u_*]  = 0 \inn \R^n \times (t_0, \infty) \\
&\vp(\cdot,t_0)   =\vp_* \inn \R^n.
\end{aligned}\right.
\label{F2}\ee
where
$$
N_{u_*}[\vp]= f(u_*+\vp ) - f'(u_*)\vp - f(u_*)
$$
the function $\vp_*(x)$ is an initial condition to be determined, and  in \equ{F1} we have $u_0 = u_*(\cdot ,0) +  \vp_*   $.

\medskip
We consider the cut-off functions  $\eta_j, \ \zeta_j$, $ j=1,\ldots, k,$ defined as
\be \label{newcuts}\begin{aligned}
 \eta_j (x,t) &=   \chi \left (  \frac{|x - \xi_j (t)|} {  R\mu_j (t)  } \right ) \\
 \zeta_j (x,t) &=   \chi \left (  \frac{|x- \xi_j (t)|} {  R\mu_j (t) } \right )  -    \chi \left ( \frac {|x- \xi_j (t)|} {R^{-1} \mu_j(t)  } \right )
\end{aligned} \ee
We observe that
$$\eta_j(x,t)\ =\ \left\{
\begin{aligned}1 & \ \hbox{ for } \   |x- \xi_j (t) |\le   R\mu_j(t), \\  0& \ \hbox{ for }\
|x- \xi_j (t)|\ge   2R\mu_j(t)  \end{aligned} \right.$$
and
$$\zeta_j(x,t)\ =\ \left\{
\begin{aligned}1 & \ \hbox{ for } \  \qquad\qquad 2R^{-1}\mu_j(t)\le    |x - \xi_j (t)|\le   R\mu_j(t) \\  0& \ \hbox{ for }\
|x |\ge   2R\mu_j(t)\  \hbox{ or }  \  \ |x- \xi_j (t) |\le    R^{-1} \mu_j(t) . \end{aligned} \right.$$
We will in addition choose  an  $R$ to be a $t$-dependent, slowly growing function, say
\be    \quad R(t) = t^\ve , \quad t>t_0 \label{epsilon}\ee
where $\ve>0$ will be later on fixed sufficiently small.

\medskip
We consider functions $\phi_j(y,t)$ $j=1,\ldots, k$ defined for $|y|\le  3R$  and a function $\psi(x,t)$ defined in $\R^n\times (t_0,\infty)$.
We look for a solution $\vp(x,t)$ of \equ{F2} of the form
\be
\vp  =   \sum_{j=1}^k  \vp_j  \eta_j  \, +\  \Psi,
\label{foorma}\ee
where
$$
\vp_j(x,t) \ =\  \frac {(-1)^{j-1}}{\mu_j^{\frac{n-2}2}} \phi_j \left (\frac {x- \xi_j (t) }{\mu_j (t) },t \right).
$$
Let us substitute $\vp$ given by $\equ{foorma}$ into equation \equ{F2}. We get
$$
\begin{aligned} S[u_*  +\vp] =&
  \sum_{j=1}^k \eta_j ( -   \pp_t \vp_j  + \Delta_x \vp_j +  f'(U_j)  \vp_j  + \zeta_j f'(U_j)  \Psi +  \mu_j^{-\frac{n+2}2} D_j  [\vec \mu_1, \vec \xi ]  )  \\
 & -\Psi_t +   \Delta_x \Psi +   V  \Psi   +  B[\vec \phi]  + \mathcal N(\phi,\Psi;\vec \mu, \vec \xi ) +  E^{out}
\end{aligned}
$$
Here we denote for $\vec \phi= (\phi_1,\ldots, \phi_k),$ $\vec \mu= (\mu_1,\ldots, \mu_k)$, $\vec \xi = (\xi_1 , \ldots , \xi_k )$
\be \label{poto}
\begin{aligned}
&B[\vec \phi] = \sum_{j=1}^k  2\nn_x\eta_j \nn_x \vp_j  +  ( -\pp_t \eta_j + \Delta_x \eta_j) \vp_j +
 \sum_{j=1}^k \eta_j (f'(u_*)- f'(U_j))\vp_j\\
 & \quad \quad + \dot \mu_j {\partial \over \partial \mu_j} \varphi_j \eta_j +\dot \xi_j \nabla_{\xi_j} \varphi_j \eta_j \\
 & \mathcal N( \vec \phi , \psi; \vec \mu, \vec \xi )   =     N_{u_*} \big ( \sum_{j=1}^k  \vp_j  \eta_j  \, +\  \psi   \big), \quad
 V =  f'(u_*) -  \sum_{j=1}^k \zeta_j f'(U_j), \\ &E^{out} =   S[u_*] - \sum_{j=1}^k \mu_j^{-\frac{n+2}2} D_j[\vec \mu_1, \vec \xi ] \eta_j  .
\end{aligned}
\ee
where $D_j[\vec \mu_1, \vec \xi ]$ is the operator defined in \equ{e55} and \eqref{D0}.

We will have that $S[u_* + \vp ] = 0 $ if the following system of $k+1$ equations is satisfied.
\be \label{inner}
 -   \mu_j^2 \pp_t \phi_j  + \Delta_y \phi_j +  pU(y)^{p-1}  \phi_j  + \zeta_j U(y)^{p-1}  \mu_j^{\frac{n-2}2} \Psi +   D_j[\vec\mu_1 , \vec \xi ] = 0
\ee
\be \label{outer}
-\Psi_t  + \Delta_x\Psi   +  V \Psi  +  B[\vec\phi] +    \mathcal N ( \phi , \Psi; \vec \mu, \vec \xi)  +   E^{out} = 0
\ee
In the next sections we will find a solution to this system with the appropriate size. We will be able to do that only choosing
properly  the parameters  $\vec\mu_1$ and the points $\vec \xi$. We shall formulate this problem creating a system involving the parameters as a part of the
unknowns.

\section{The linear outer and inner problems}\label{s4}

In order to solve system \equ{inner}-\equ{outer}, in this section we find inverses and corresponding estimates for their main linear parts.

\subsection{The linear outer problem}
In this section we consider the issue of finding estimates through barriers for the unique solution of
\be\label{heat}\left \{
\begin{aligned}
\psi_t \ = &\ \Delta_x \psi  +  g(x,t)   \inn \R^n \times (t_0,\infty)\\
\psi (\cdot,t_0) \ = &\ 0 \inn \R^n
\end{aligned}\right.
\ee
given by Duhamel's formula
\be\label{duh}
\psi(x,t) = \TT^{out}[g] (x,t)=   \frac 1{ (4\pi)^{\frac n2} }\int_{t_0}^t    \frac {ds}{ (t-s)^{\frac n2 } }  \int_{\R^n}    e^{ -\frac {|x-y|^2 }{4(t-s)}} g(y,s)\, dy .
\ee
where $g \in L^\infty (\R^n \times (0,\infty)) $.
The class of right hand sides $g$ that we want consider in this section are  those needed to solve the outer problem.
We begin with a class of right hand sides that are better expressed in selfsimilar form.
Let us consider the function
$$
g_0(x,t) =  \frac 1{t^{d+1}} h\left ( \frac x{\sqrt{t}}  \right)
$$
where $0\le d\le \frac n2 $. We assume that for a positive function $h$ we have
\be
|g(x,t) | \le  g_0(x,t)
\label{ll}
\ee

\begin{lemma} \label{lema1}
There exists a constant $C$ such that for
all $g$ in equation $\equ{heat}$ that satisfies $\equ{ll}$ and the solution $\psi$ given by $\equ{duh}$ we have that
\begin{enumerate}
\item \label{pp}
 If $h$ is compactly supported  then
$$
|\psi (x,t)|  \ \le \ \frac C{t^d} e^{-\frac{|x|^2} {4t}}
$$

\item \label{pp2}
If  for some $m> 2d $ we have
$
h(z) =  \frac 1{ 1+ |z|^m}  ,
$
then
$$
|\psi (x,t)|  \ \le \  \frac { Ct^{\frac m2 -d} } { t^{\frac m2} + |x |^m}
$$

\end{enumerate}

\end{lemma}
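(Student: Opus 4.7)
Since the heat kernel in \eqref{duh} is positive, $|\psi|\le \TT^{out}[g_0]$, so it is enough to produce an explicit nonnegative supersolution $\bar\psi$ of \eqref{heat} with source $g_0$, and conclude $\TT^{out}[g_0]\le \bar\psi$ by the parabolic maximum principle (both sides decay at spatial infinity, which justifies the comparison on the whole of $\R^n\times (t_0,T)$ for each $T>t_0$). Because $g_0$ is self-similar under the parabolic rescaling $(x,t)\mapsto (\lambda x,\lambda^2 t)$, the natural ansatz is
\[\bar\psi(x,t) \;=\; A\, t^{-d}\, H\!\left(\tfrac{x}{\sqrt{t}}\right),\]
and a direct calculation gives, with $\eta=x/\sqrt{t}$,
\[(\partial_t-\Delta)\bar\psi \;=\; A\, t^{-d-1}\,(LH)(\eta), \qquad (LH)(\eta) := -d\, H(\eta) - \tfrac12\,\eta\cdot\nabla H(\eta) - \Delta H(\eta).\]
The problem is thus reduced to producing a smooth $H\ge 0$ with $LH(\eta) \ge c_0\, h(\eta)$ pointwise on $\R^n$, for a suitable constant $c_0>0$.

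\textbf{Part (1).} I would take $H(\eta) = e^{-|\eta|^2/4}$, for which a short computation gives the clean identity $LH = (n/2 - d)\, e^{-|\eta|^2/4}$, strictly positive since $d<n/2$. As $\mathrm{supp}\,g_0$ lies in a parabolic cone $\{|x|\le R_0\sqrt{t}\}$ determined by $\mathrm{supp}\,h$, the Gaussian factor is bounded below on that support by $e^{-R_0^2/4}$, and choosing $A$ large enough in terms of $\|h\|_\infty$, $R_0$ and $n/2-d$ gives $(\partial_t-\Delta)\bar\psi \ge g_0$ everywhere; the initial condition $\bar\psi(\cdot,t_0)\ge 0$ is automatic. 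This finishes Part~(1).

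\textbf{Part (2).} For $h(\eta)=(1+|\eta|^m)^{-1}$ I would first try $H(\eta) = (1+|\eta|^2)^{-m/2}$. The asymptotics go as hoped: at $\eta=0$ the Laplacian term dominates and gives $LH(0)=mn-d>0$, while for $|\eta|\to\infty$ the drift term dominates and yields $(LH)(\eta)\sim (m/2-d)\,|\eta|^{-m}\sim (m/2-d)\,h(\eta)>0$, the leading coefficient being positive precisely because of the standing hypothesis $m>2d$. The intermediate range is the delicate point: the quadratic in $|\eta|^2$ that controls the sign of $LH$ on this profile is not automatically nonnegative for all admissible $(n,d,m)$. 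I would handle this by adding a Gaussian correction and working with $H(\eta) = (1+|\eta|^2)^{-m/2} + B\, e^{-|\eta|^2/4}$: by Part~(1) the extra term contributes $B(n/2-d)e^{-|\eta|^2/4}$ to $LH$, which dominates any localized negative contribution of the power-law piece on a bounded annulus provided $B$ is taken large. Fixing $B$ first and then $A$ in that order yields $LH \ge c_0\,h$ uniformly on $\R^n$, and the comparison argument concludes. The hard part is exactly this uniform positivity check in the intermediate regime; the rest of the argument — self-similar ansatz, heat-kernel positivity, and maximum principle — is standard.
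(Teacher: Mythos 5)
Your proposal follows exactly the same route as the paper: pass to the self-similar ansatz $\bar\psi = A\,t^{-d}H(x/\sqrt t)$, reduce to the stationary inequality $LH\ge c_0 h$, and build supersolutions. For Part~(2) the two arguments are structurally identical (truncated power law corrected by a Gaussian near the origin; the paper invokes Part~(1) to supply the Gaussian, you add $B\,e^{-|\eta|^2/4}$ directly). Where they diverge is Part~(1): the paper writes down an exact radial solution of $L_{n/2}[f]+h=0$ by a variation-of-parameters formula and then uses the sign of $(d-\tfrac n2)f$ to get a supersolution for \emph{all} $d\le n/2$; you use the homogeneous solution $H=e^{-|\eta|^2/4}$ directly.

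The gap in your argument is the endpoint $d=n/2$, which the hypothesis $0\le d\le n/2$ allows and which the paper does use (the bound $t^{-n/2}e^{-|x|^2/(4t)}$ appears explicitly in items (2)–(3) of Proposition~\ref{merda} and in Lemma~\ref{lema2} via $\bar b=n/2$). Your identity $LH=(\tfrac n2 - d)H$ gives $LH\equiv 0$ at $d=n/2$, so the pure Gaussian is only a solution, not a strict supersolution, and the comparison argument produces no control over a nonzero source. The same defect propagates to your Part~(2): the correction term contributes $B(\tfrac n2-d)e^{-|\eta|^2/4}$, which vanishes at $d=n/2$, so the localized negativity of the power-law profile in the intermediate annulus cannot be absorbed there. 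To close this you would need to follow the paper and construct an exact solution of the ODE at the critical exponent (or a supersolution with some additional slowly-varying factor), rather than rely on a homogeneous Gaussian. It is worth being aware that $d=n/2$ is a genuinely borderline case — the mass $\int_{t_0}^t\!\int g_0$ grows like $\log t$ there, so the conclusion is sharp and one cannot expect a cheap argument to give it — which is precisely why the paper reaches for the explicit ODE formula at that exponent. For $d<n/2$ strictly your argument is correct and slightly more elementary than the paper's.
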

\proof
We look for a supersolution of problem \equ{heat}, for $g$ satisfying \equ{ll}  of
the form
$$
\psi(x,t) =  \frac 1{t^{d} }  f\left ( \frac x{\sqrt{t}} \right)
$$
The differential inequality
$$
-\psi_t +  \Delta \psi  + g_0(x,t)  \le  0
$$
is equivalent to the following relation for $f(\xi)$
\be
L_d [f] +  h(\xi) \le 0  \quad \xi\in (0,\infty)
\label{aaa}\ee
where
$$
L_d [f]  = f''(\xi)  +  \frac{n-1} {\xi}{f'(\xi)}   +   d f(\xi)  + \frac 12 \xi f'(\xi)
$$
Let us assume that $d\le \frac n2$ and that $h$ is compactly supported. For $d=\frac n2$ the following function is an exact solution.
$$
f(\xi)=  e^{-\frac {\xi^2} 4}  \int_0^\xi e^{\frac {\rho^2} 4}\rho^{1-n} d\rho  \int_\rho^\infty e^{-\frac {s^2} 4} h(s) s^{n-1}ds
$$
This $f$ is also a positive supersolution to equation \equ{aaa}. Inequality \equ{pp} then immediately follows.

\medskip
Let us now assume that $h(\xi) = \frac 1{ 1+ \xi^m}   $ and choose as a supersolution  for all $\xi$ sufficiently large a function of the form $\bar f(\xi)  = \frac C{\xi^m}$ for a large enough large $C$. In fact for $m> 2d $    we will have $L_d[\bar f] + h  < 0$ for  $\xi \ge M $.  If we consider the usual smooth cut-off function $\chi(s)$ we have then that
\be \label{co} f(\xi) =  (1 - \chi (\xi-M )) \bar f (\xi)  + \bar f_1  (\xi) \ee
 will satisfy  the  differential inequality  $L_d[f] + h\le 0$ in case that
$$\begin{aligned}
L_d[f] + h = & L_d[\bar f_1]  +    2\chi'  \bar f'  +   (\chi''   +  \frac {n-1}{\xi}\chi' +  \frac 12 \xi \chi'  )  \bar f \\ &+
(1-\chi) (L_d[\bar f]  +  h) + \chi h \le    \quad  L_d[\bar f_1]  + h_c(\xi) \le 0
\end{aligned} $$
where the function $h_c$ is compactly supported. Using \equ{pp} we then find a positive supersolution $\bar f_1$   of $L_d[\bar f_1]  + |h_c(\xi)| \le 0 $
with a Gaussian decay. From here and \equ{co},  relation \equ{pp2} readily follows. \qed

\bigskip
Next we consider a class of right hand sides $g$ which satisfy \equ{ll} for a class of functions  $g_0$ which are not of self-similar form.

Let  us consider a positive function
$\la(t)$  such that for some $a>0$
\be \dot \la(t) =   t^{-a-1}  (1 + o(1))  \ass t\to + \infty, \label{ccla}\ee
and a point $\xi (t)$ such that
$$
{ t |\dot \xi (t) |\over \la (t)} =o(1) , \quad {\mbox {as}} \quad t \to +\infty.
$$
For numbers $m>2$ and $\alpha < m$ we  consider the function  $g_1(x,t)$  given by
\be\label{g01}
 g_1(x,t)  =  \frac 1{\la(t)^{2+\alpha} } \frac 1{1+ |y|^{m}} \chi\left (\frac {|x|} {\sqrt{t}}  \right) , \quad y= \frac{x-\xi (t) }{\la(t)} \, .
\ee
We consider again functions $g$ with
\be \label{lll}
|g(x,t)|\ \le \ g_1(x,t) \foral (x,t)\in (t_0, \infty)
\ee
and find suitable barriers for \equ{heat} dependent on whether $m<n$ or $m>n$.

\begin{lemma}\label{lema2}
Let us assume that $m\ne n$.
There exists a $C>0$ such that for all $g$  satisfying
 $\equ{lll}$ with $g_1$ given by $\equ{g01}$, the solution $\psi(x,t)$ of $\equ{heat}$ given by $\equ{duh}$
 satisfies the inequality
 $$
|\psi(x,t)| \, \le \,  C\, \Big [
\frac { \la^{-\alpha}} {1 + |y|^{\bar m -2 }}   +       \frac 1{t^b} e^{-\frac{|x|^2}{ 4t}}  
\Big ],  \quad y= \frac {x-\xi (t) }
{\la(t)}
$$
where $\bar m = \min\{m,n\},$ and  setting $ b= a (\bar m -2 -\alpha)+ \frac {\bar m -2 }2 $, we write
  $$ \quad   \bar b =  \begin{cases} \frac n 2 &\quad{ if } \quad b> \frac n 2   \\ b& \quad{ if }\quad  b< \frac n 2
  \end{cases}   . $$
Moreover, we have the following local estimate on the gradient
$$
|\nabla_x \psi (x,t) | \, \leq \, C \, \la^{-1} R^{-1} \frac { \la^{-\alpha}} {1 + |y|^{\bar m -2 }}, \quad {\mbox {in}} \quad |{x-\xi (t) \over \la (t)} |<R.
$$

\end{lemma}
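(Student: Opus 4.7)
Since the equation is linear and $|g|\le g_1$, the comparison principle applied to $\bar\psi\pm\psi$ reduces the estimate to the construction of a nonnegative supersolution $\bar\psi$ of $-\bar\psi_t+\Delta_x\bar\psi+g_1\le 0$ with $\bar\psi(\cdot,t_0)\ge 0$, having the claimed upper bound. I would seek $\bar\psi$ as the sum of two pieces, $\bar\psi=\bar\psi_{in}+\bar\psi_{out}$: an inner elliptic barrier anchored at the moving frame $(\xi(t),\lambda(t))$ that controls the source $g_1$ on its support, and an outer self-similar Gaussian profile, produced by Lemma \ref{lema1}, that captures the far-field behavior and repairs the residual left by truncating $\bar\psi_{in}$.

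\textbf{Construction of the inner barrier.} The first step is to find a smooth radial profile $\Phi\ge 0$ on $\R^n$ with $\Phi(y)\le C(1+|y|)^{-(\bar m-2)}$ and $-\Delta_y\Phi\ge c_0(1+|y|^m)^{-1}$. When $m<n$, the explicit choice $\Phi(y)=(1+|y|^2)^{-(m-2)/2}$ works: a direct calculation gives $-\Delta\Phi=(m-2)(1+|y|^2)^{-m/2-1}[n+(n-m)|y|^2]$, positive and of the right order since $n-m>0$. When $m>n$, the function $(1+|y|^m)^{-1}$ is integrable over $\R^n$ and its Newtonian convolution supplies a solution $\Phi$ with decay $|y|^{-(n-2)}$, matching $\bar m=n$. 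Setting $\bar\psi_{in}(x,t)=A_1\lambda(t)^{-\alpha}\Phi(y)$, hypothesis (\ref{ccla}) together with $t|\dot\xi|/\lambda\to 0$ yield
\[ |\partial_t\bar\psi_{in}|\le CA_1\lambda^{-\alpha}t^{-1}(1+|y|)^{-(\bar m-2)},\quad \Delta_x\bar\psi_{in}\le -c_0A_1\lambda^{-\alpha-2}(1+|y|^m)^{-1}. \]
Inside the support of $g_1$, where $|y|\le C\sqrt t/\lambda$, one has $\lambda^2(1+|y|)^2/t\le C$, so the Laplacian term dominates the time derivative; choosing $A_1$ sufficiently large then produces $-\partial_t\bar\psi_{in}+\Delta_x\bar\psi_{in}+g_1\le 0$ throughout the support of $g_1$.

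\textbf{Outer barrier via Lemma \ref{lema1}.} Outside the support of $g_1$ the source vanishes, but $\bar\psi_{in}$ alone is not a supersolution because its time derivative ceases to be dominated by its Laplacian. I would truncate $\bar\psi_{in}$ smoothly at the intermediate scale $|x-\xi|\sim\sqrt t$; this produces a commutator error bounded by $Ct^{-b-1}\mathbf{1}_{|x-\xi|\sim\sqrt t}$, since at the truncation scale both $\bar\psi_{in}$ and $\sqrt t\,|\nabla\bar\psi_{in}|$ have size $t^{-b}$ with $b=a(\bar m-2-\alpha)+(\bar m-2)/2$. This residual is exactly self-similar, of the form $t^{-(d+1)}h(x/\sqrt t)$ with $h$ compactly supported and $d=b$. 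If $b<n/2$, Lemma \ref{lema1}(1) gives a Gaussian supersolution $\bar\psi_{out}\le Ct^{-b}e^{-|x|^2/4t}$. If $b>n/2$, the residual is also bounded by $Ct^{-n/2-1}\mathbf{1}_{|x-\xi|\sim\sqrt t}$, which is the borderline case permitted by Lemma \ref{lema1} ($d=n/2$); this is precisely the origin of the cap $\bar b=\min\{b,n/2\}$. Adding, $\bar\psi_{in}+\bar\psi_{out}$ is the desired global supersolution.

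\textbf{Gradient estimate and main difficulty.} The local gradient bound in $|y|<R$ follows from standard interior parabolic regularity: rescaling $\tilde\psi(z,\tau)=\lambda^\alpha\psi(\xi+\lambda z,t+\lambda^2\tau)$ on a parabolic cylinder of unit size about $(y_0,0)$, the rescaled problem has uniformly bounded source and solution by the $L^\infty$ bound just established, so interior parabolic gradient estimates yield $|\nabla_z\tilde\psi(z,0)|\le CR^{-1}(1+|z|)^{-(\bar m-2)}$; undoing the rescaling gives the claim. The delicate part of the argument is the scaling bookkeeping across the two regions: one must verify that the residual source produced by the truncation of $\bar\psi_{in}$ is genuinely self-similar with precisely the exponent $d=b$ forecast by the lemma, and that this dovetails correctly with the dichotomy $b<n/2$ vs.\ $b>n/2$ in Lemma \ref{lema1}, which reflects whether the source is large enough to drive the temporal decay of $\psi$ or whether the intrinsic $t^{-n/2}$ decay of the heat kernel takes over.
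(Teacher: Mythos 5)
Your overall architecture is the same as the paper's: take an elliptic barrier $P$ with $-\Delta_y P \gtrsim (1+|y|^m)^{-1}$ and decay $(1+|y|)^{-(\bar m-2)}$, set the inner supersolution $\bar\psi_{\mathrm{in}}=\lambda^{-\alpha}P\bigl((x-\xi)/\lambda\bigr)$, truncate at scale $|x-\xi|\sim\sqrt t$, feed the commutator residual into Lemma \ref{lema1} to obtain the Gaussian tail, and derive the gradient estimate by rescaling to a unit parabolic cylinder and invoking standard interior estimates. Your scaling bookkeeping for the truncation error, $b=a(\bar m-2-\alpha)+(\bar m-2)/2$, and the role of the cap $\bar b=\min\{b,n/2\}$ are correct and match the paper.

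However, there is a genuine gap in your treatment of the inner region when $m>n$. You claim that inside the support of $g_1$ (i.e.\ $|y|\le C\sqrt t/\lambda$) the inequality $\lambda^2(1+|y|)^2/t\le C$ guarantees that the Laplacian term dominates the time derivative. That sufficient condition is correct only when $m=\bar m$, i.e.\ when $m<n$. Comparing $|\partial_t\bar\psi_{\mathrm{in}}|\sim\lambda^{-\alpha}t^{-1}(1+|y|)^{-(\bar m-2)}$ against $\lambda^{-\alpha-2}(1+|y|)^{-m}$, what you actually need is $\lambda^2(1+|y|)^{m-\bar m+2}/t\ll 1$. For $m>n$ this is $\lambda^2(1+|y|)^{m-n+2}/t$, and at the outer edge of the support, where $|y|\sim\sqrt t/\lambda$, this quantity behaves like $(t/\lambda^2)^{(m-n)/2}\to\infty$, so the differential inequality fails there no matter how large you take $A_1$. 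The paper handles this with two additional ingredients you have omitted: first it reduces WLOG to $m\le n+\sigma$ with $\sigma$ arbitrarily small (legitimate since $\bar m=n$ is unchanged and a faster-decaying source can only help); second it only asserts the inner supersolution inequality on the strictly smaller set $|x-\xi|<t^{1/2-\varepsilon}$, and then separately estimates the error $E(x,t)$ in the residual annulus $t^{1/2-\varepsilon}<|x-\xi|\lesssim\sqrt t$ by an amount of size $t^{-1-b}(1+|t^{-1/2}(x-\xi)|^{n-2})^{-1}$, so that the annulus contribution is also absorbed by the Lemma \ref{lema1} piece. Without this restriction and the separate annulus bound, your construction of $\bar\psi_{\mathrm{in}}$ is not a supersolution on the whole support of $g_1$ when $m>n$, and the argument does not close.
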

\proof
We need to find a positive supersolution to
\be\label{su1}
-\psi_t +  \Delta \psi  + g_0(x,t) \le 0 \inn \R^n\times (t_0,\infty).
\ee
Let us consider  the radial solution $P(z)$ of the equation
$$
-\Delta_z P (z) =  \frac 1{1+ |z|^m}  \inn \R^n
$$
given by the formula
$$P(z) =  2 \int_{|z|}^\infty  \frac {dr}{r^{n-1}} \int_0^r   \frac {\rho^{n-1}d\rho} { 1+     \rho^{m}} .$$
Clearly we have
$$
 |z\cdot \nn P(z)| +   P(z)  \ \le \ \frac C{1+|z|^{\bar m -2}}
$$
where $\bar m = \min\{m, n\}$. Let us assume first $m>n$.
With no loss of generality we take $m \le n+ \sigma$ for an arbitrarily small, fixed  $\sigma>0$. Let us define
$$ \bar \psi (x,t) =  \la(t)^{-\alpha} P\left ( \frac {x-\xi (t) }{\la(t)}    \right ) $$
and compute for $| x| < t^{\frac 12}$, using assumptions \equ{ccla},
$$\begin{aligned}
 E(x,t)\ :=&\ - \bar \psi_t  + \Delta \bar \psi  +  g_1(x,t) \,\\ =&\  \,
  \la^{-\alpha} \Big [ \frac {\dot \la}{\la}
   y\cdot \nn_y P(y) + \alpha \frac {\dot \la}{\la}  P(y)  + \nabla_y P(y) \cdot {\dot \xi \over \la}  -  \frac 1{\la^2} \frac 1{ 1+     |y |^{n+\sigma}}\Big ] \\
< &\  \frac {\la^{-\alpha}}{1 + |y|^{n-2}}\Big [  \frac c t \,  -\,   \frac 1{\la^2}\frac 1{ 1+     |y |^{2+\sigma}}       \Big ] \, ,\quad y= \frac {x-\xi(t)}{\la(t)}.
\end{aligned}$$
Reducing the value of $\sigma$ if necessary, we get that the above quantity is negative provided that $|x-\xi | <  t^{\frac 12 -\ve}$, with an arbitrarily small $\ve>0$.  
Now, for $|x-\xi |> t^{\frac 12 -\ve}$ we have
$$\begin{aligned}
E(x,t) \le  &\  \frac  c t   \frac { \la^{n -2 -\alpha } }{ |x-\xi |^{n  -2}}\\   \le  &\
\frac c{t^{1+  (n -2 -\alpha)a +\frac{n-2}2}}  \frac 1{    | t^{-\frac 12}  (x-\xi )|^{n -2}  + t^{-(n-2)\ve}   } \\ \le  & \
\frac c{t^{1+  (n -2 -\alpha)a +\frac{n-2}2 -(n-2)\ve }}  \frac 1{    | t^{-\frac 12}  (x-\xi )|^{n -2}  + 1   }\\ \le  &\
\frac c{t^{1+b}}   \frac 1{    | t^{-\frac 12}  (x-\xi )|^{n-2}  + 1   }\, .
\end{aligned}$$
Now, we write
$
\psi    = \chi((x-\xi )/\sqrt{t}) \bar \psi + \bar\psi_1
$.
Then relation \equ{su1} amounts to finding a positive  $\bar\psi_1$
satisfying
$$
- \pp_t \bar\psi_1  + \Delta \bar\psi_1  + \frac 1{t^{1+b}} h((x-\xi)/\sqrt{t})
$$
for a certain smooth, compactly supported $h(\xi)$. Then Lemma \ref{lema1} provides a positive supersolution of the type
$$
\bar\psi_1(x,t) = \frac C{t^{1+\bar b}} e^{-\frac{|x-\xi|^2} {4t}} .
$$
The proof is concluded in the case $m>n$. The proof for $m<n$ is the same, in fact slightly simpler since we can directly
take in the above argument  $\sigma=\ve =0$.

\medskip
To get the local estimate for the gradient, we write $\psi (x,t) = \tilde \psi ({x-\xi \over \la} , \tau)$, with ${d \tau \over dt} = \la^{-2} (t)$, and observe that
$$
\tilde \psi_\tau = \Delta_z \tilde \psi +\tilde g (z,\tau ), \quad |\tilde g (z,\tau ) | \leq {\la^{-\alpha} \over 1+ |z|^m} \, \chi ({\la z \over \sqrt{t} } ).
$$
We have already established that
$$
|\tilde \psi (z,\tau ) | \leq C {\la^{-\alpha} \over 1+|z|^{m-2} } , \quad |z| <R.
$$
Standard parabolic estimates give, for $\tau_1 > \tau (t_0)$, for fixed $M>0$,
$$
\begin{aligned}
\| \nabla_z \tilde \psi (\cdot , \tau_1) \|_{L^\infty (B_M (0))} \, & \, \leq C \left[ \| \tilde \psi \|_{L^{\infty} (B_{2M} (0) \times (\tau_1 - 1, \tau_1 ))} +  \| \tilde g \|_{L^{\infty} (B_{2M} (0) \times (\tau_1 - 1, \tau_1 ))}\right]\\
&\, \leq C \la^{-\alpha}.
\end{aligned}
$$
In the original variables, we get for $t >t_0$,
$$
R \la | \nabla_x \psi (x,t) | \leq C \frac { \la^{-\alpha}} {1 + |y|^{\bar m -2 }}, \quad {\mbox {for}} \quad |{x-\xi \over \la } |<R.
$$

\qed

\medskip
We apply the results above to derive estimates of the solution of \equ{heat} for a right hand side $g$ controlled by several different weights, that are designed ad-hoc to treat the outer problem \eqref{outer}.
Let us fix $\sigma>0$,  $a>0$ and $\beta$ with
$$
0<a<n-2, \quad 2< \beta < n
$$
and
 define the following weights:
\be \label{omega1}\left\{ \begin{aligned}
\omega_{11} (x,t)\ =& \ { t^{-1-\sigma}  \over (1+ |x-\xi_1  | )^{2+a} }\,  \chi
\left( { |x-\xi_1  | \over \sqrt{t}} \right) \\  \omega_{11}^* (x,t)\ =&\ { t^{-1-\sigma} \over (1+ |x-\xi_1  | )^{a } }\,  \chi
\left( { |x -\xi_1 | \over \sqrt{t}} \right)
\end{aligned}\right.\ee
and for $2\le j\le k$,
\be \label{omega1j}\left\{ \begin{aligned}
\omega_{1j} (x,t)\ =&  \   {t^{-\sigma } \over \mu_j^{n+2 \over 2}}  \, { {\la_j^{{n-2 \over 2} } \over (1+ |{x -\xi_j \over \mu_j} | )^{2+a} }}\,  \chi
\left( { |x-\xi_j | \over \bar \mu_j} \right) \\  \omega_{1j}^* (x,t) \ =&  \    {t^{-\sigma } \over \mu_j^{n-2 \over 2}}  \, { {\la_j^{{n-2 \over 2} } \over (1+ |{x -\xi_j \over \mu_j} | )^{a } }}\,  \chi
\left( { |x-\xi_j| \over \bar \mu_j} \right)
\end{aligned}\right.\ee
\be \label{omega2j} \left\{ \begin{aligned}
\omega_{2j} (x,t) \ =&  \,  { t^{-\sigma} \over \bar \mu_j^{n+2 \over 2}}  \, { \la_j^{{n-2 \over 4} } \over (1+ |{x -\xi_j  \over \bar \mu_j} | )^{n } } \\  \omega_{2j}^* (x,t)\ =&  \  {t^{-\sigma} \over \bar \mu_j^{n-2 \over 2}}  \, { \la_j^{{n-2 \over 4} } \over (1+ |{x -\xi_j \over \bar \mu_j} | )^{n-2 } }
\end{aligned}\right.\ee
and
\be \label{omega3}\left\{ \begin{aligned}
\omega_3 (x,t)\ =&\  {1\over \left( \sqrt{t} + |x| \right)^{\beta}} \\
\omega^*_3 (x,t)\  =&\  {1\over \left( \sqrt{t} + |x| \right)^{\beta -2}}
\end{aligned}\right. .\ee

We claim that the following estimates hold
\begin{prop}\label{merda}

Let us consider $g$ and $\psi$ as in $\equ{heat}$ and $\equ{duh}$.
 There exists a $C>0$ such that:

\begin{enumerate}

\item\label{a1}
 If  \  $
|g(x,t)| \le  \omega_{11}(x,t) $ \ then
$$
\quad |\psi(x,t) | \ \le\  C \Big ( \omega_{11}^* (x,t) + \frac 1{t^{ 1+ \frac a2 + \sigma}} e^{-\frac{|x-\xi_1 |^2}{4t}} \Big ).$$

\medskip
\item\label{a2}
If  for $j\ge 2$  \  $|g(x,t)| \le  \omega_{1j}(x,t) $ \ then
 $$ \qquad\quad |\psi(x,t)  |\  \le\  C \Big (\omega_{1j}^*(x,t) +  \omega_{2j}^*(x,t)  +  \frac 1{t^{\frac n2}} e^{-\frac{|x-\xi_j |^2}{4t}} \Big ) .$$

\medskip
\item\label{a3}
If  for $j\ge 2$  \  $|g(x,t)| \le  \omega_{2j}(x,t) $ \ then
 $$ |\psi(x,t)  |\  \le\  C \Big (\omega_{2j}^*(x,t)   +  \frac 1{t^{\frac n2}} e^{-\frac{|x-\xi_j |^2}{4t}} \Big ).\qquad $$

\medskip
\item\label{a4}
If  \  $
|g (x,t) | \le  \omega_{3} (x,t)$\  then\  $$ |\psi (x,t) |\  \le\  C \omega_3^* (x,t).\qquad \quad\qquad\qquad$$

\end{enumerate}
\end{prop}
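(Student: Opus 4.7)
My strategy is to construct, in each of the four cases, an explicit supersolution $\bar\psi(x,t)$ of
\begin{equation*}
-\pp_t\bar\psi + \Delta_x\bar\psi + |g(x,t)| \le 0 \inn \R^n\times(t_0,\infty), \quad \bar\psi(\cdot,t_0)\ge 0,
\end{equation*}
and then invoke the parabolic comparison principle applied to Duhamel's formula \equ{duh} to obtain $|\psi|\le\bar\psi$. The building blocks are (i) a spatial Poisson-type kernel $P(z)$ solving $-\Delta P = h$ on $\R^n$, whose decay is governed by $\bar m = \min(m,n)$, and (ii) a Gaussian correction absorbing compactly supported self-similar residuals via Lemma \ref{lema1}(\ref{pp}).

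\textbf{Cases (\ref{a4}) and (\ref{a3}).} Case (\ref{a4}) is immediate from Lemma \ref{lema1}(\ref{pp2}): I rewrite $\omega_3(x,t) = t^{-(d+1)}h(x/\sqrt{t})$ with $h(z)=(1+|z|)^{-\beta}$, $d=\beta/2-1$, $m=\beta$; since $\beta>2$ the condition $m>2d$ is verified, and the resulting bound $Ct(t^{\beta/2}+|x|^\beta)^{-1}$ is equivalent to $\omega_3^*$ by the elementary relation $t^{\beta/2}+|x|^\beta\asymp(\sqrt{t}+|x|)^\beta$. For case (\ref{a3}) I apply Lemma \ref{lema2} with $\la=\bar\mu_j$: the ansatz $\vec\mu=\vec\mu_0+\vec\mu_1$ together with \equ{assmu} guarantees $|\dot\la|/\la\sim 1/t$, the profile exponent is $m=n$ so $\bar m=n$, producing the decay $(1+|z|)^{-(n-2)}$ that matches $\omega_{2j}^*$, while the exponent $b$ in Lemma \ref{lema2} exceeds $n/2$, giving the stated Gaussian prefactor.

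\textbf{Case (\ref{a1}).} Here $\mu_1\equiv 1$ so no natural rescaling is available, and I proceed by direct construction. Let $P_1$ solve $-\Delta P_1=(1+|z|)^{-2-a}$ on $\R^n$; since $a<n-2$ this yields $P_1(z)\lesssim(1+|z|)^{-a}$. I set
\begin{equation*}
\bar\psi_P(x,t) = Ct^{-1-\sigma}P_1(x-\xi_1)\,\chi(|x-\xi_1|/\sqrt{t}),
\end{equation*}
so that the elliptic cancellation $\Delta P_1+(1+|z|)^{-2-a}\equiv 0$ leaves three residuals in $-\pp_t\bar\psi_P+\Delta\bar\psi_P+\omega_{11}$: the prefactor term $(1+\sigma)t^{-2-\sigma}P_1\chi$, the motion term $t^{-1-\sigma}\dot\xi_1\cdot\nabla P_1\chi$, and cutoff commutators $t^{-1-\sigma}[2\nabla P_1\cdot\nabla\chi+P_1\Delta\chi-P_1\pp_t\chi]$. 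All three are dominated pointwise by a compactly supported self-similar source of the form $t^{-2-\sigma-a/2}h((x-\xi_1)/\sqrt{t})$ with $h$ bounded, to which Lemma \ref{lema1}(\ref{pp}) with $d=1+\sigma+a/2$ applies, producing the claimed Gaussian tail $t^{-1-a/2-\sigma}e^{-|x-\xi_1|^2/(4t)}$.

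\textbf{Case (\ref{a2}) and main obstacle.} This is the most delicate case because $\omega_{1j}$ has profile at scale $\mu_j$ but cutoff at $\bar\mu_j\gg\mu_j$, forcing the barrier to exhibit two spatial regimes. In the inner region $|x-\xi_j|\lesssim\bar\mu_j$ I would use the rescaled Poisson kernel $t^{-\sigma}\mu_j^{-(n-2)/2}\la_j^{(n-2)/2}P_2((x-\xi_j)/\mu_j)$ with $-\Delta P_2=(1+|y|)^{-2-a}$ and $P_2\lesssim(1+|y|)^{-a}$, which reproduces $\omega_{1j}^*$. For $|x-\xi_j|\gtrsim\bar\mu_j$ the source acts as an effective localized mass, and its heat propagation at scale $\bar\mu_j$ is controlled by a second application of Lemma \ref{lema2} with $\la=\bar\mu_j$, $m=n$, and $\bar m=n$, producing the $\omega_{2j}^*$ contribution. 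The hard part will be matching the exact powers of $\mu_j$, $\la_j$ and $t$ across the two regimes so that the barrier is a genuine supersolution and the remaining residual is a compactly supported self-similar source treatable by Lemma \ref{lema1}(\ref{pp}); the sharpness of the weights in \equ{omega1j}-\equ{omega2j} leaves no slack, and loose estimates at any intermediate step would break the ensuing fixed-point scheme.
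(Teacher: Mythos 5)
Your handling of cases (\ref{a4}), (\ref{a3}) and (\ref{a1}) is correct and essentially identical to the paper's: (\ref{a4}) is Lemma~\ref{lema1}(\ref{pp2}) after the elementary identification $t^{\beta/2}+|x|^\beta\asymp(\sqrt t+|x|)^\beta$; (\ref{a3}) is Lemma~\ref{lema2} at scale $\la=\bar\mu_j$ with $m=n$; and your direct Poisson-kernel barrier for (\ref{a1}) is in effect a hands-on re-run of the proof of Lemma~\ref{lema2} in the constant-$\la$ limit, so it is a valid variant of "apply Lemma~\ref{lema2}". Your overall framework (supersolution plus comparison via Duhamel) is exactly the paper's.

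The genuine gap is in case (\ref{a2}), which you yourself flag as incomplete. You describe the correct two-stage structure — an inner barrier at scale $\mu_j$ yielding $\omega_{1j}^*$, truncated at the scale $\bar\mu_j$, with the commutator treated as an effective source at scale $\bar\mu_j$ via a second application of Lemma~\ref{lema2} — but you stop precisely where the actual content lies, namely verifying that the commutator terms are dominated by a source of the form controlled by $\omega_{2j}$, so that the second application of Lemma~\ref{lema2} produces $\omega_{2j}^*$ and not something worse. This is not a routine bookkeeping step: evaluating the uncut barrier $\psi^1\sim \mu_j^{-\frac{n-2}2} t^{-\sigma}\la_j^{\frac{n-2}2}(1+|\mu_j^{-1}(x-\xi_j)|)^{-a}$ on the transition shell $|x-\xi_j|\sim\bar\mu_j$ (where $|\mu_j^{-1}(x-\xi_j)|\sim\la_j^{-1/2}$) and comparing $\psi^1/\bar\mu_j^2+|\nabla\psi^1|/\bar\mu_j$ against the target $\la_j^{\frac{n-2}4+\frac a2}t^{-\sigma}\bar\mu_j^{-\frac{n+2}2}(1+|\bar\mu_j^{-1}(x-\xi_j)|)^{-(n+1)}$, one finds equality of the powers of $\mu_j$, $\mu_{j-1}$, $\la_j$ — a cancellation that hinges on $\bar\mu_j=\sqrt{\mu_j\mu_{j-1}}$ being exactly the geometric mean. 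Since the stated weights are sharp (you correctly note there is no slack), establishing this exact matching is the indispensable step; as written your proposal asserts a strategy for (\ref{a2}) but does not prove it.
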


\proof
Claim \eqref{a4} directly follows from Lemma \ref{lema1}.  We also see that Claims \eqref{a1} and \eqref{a3} follow from Lemma \ref{lema2}. It only remains to prove Claim \eqref{a2}.
Let us write
$$
g_0(x,t)  =        \frac{ \mu_j^{-\frac {n+2}2}} {1+  | \mu_j^{-1} (x-\xi_j ) |^{2+a}}\,  t^{-\sigma} \la_j^{\frac{n-2}2}  \chi\left (  {  \bar\mu_j^{-1}} { |x -\xi_j |}\right)
$$
where $0<a< n-2$. We claim that the following estimate holds: there is a positive supersolution $\psi(x,t)$ of
\be\label{bn}
\psi_t \ \ge\  \Delta \psi  +  g_0(x,t)  \inn  \R^n \times (t_0,\infty)
\ee
such that
$$\begin{aligned}  \psi(x,t)  \ \le &  \  C\, \Big [ \frac{ \mu_j^{-\frac {n-2}2}} {1+  | \mu_j^{-1} (x-\xi_j ) |^{a}}\,  t^{-\sigma} \la_j^{\frac{n-2}2}
 \chi\left (  {  \bar\mu_j^{-1}} { |x-\xi_j |}\right)\\\  &\ \  + \    \frac{ \bar \mu_j^{-\frac {n-2}2}} {1+  | \bar \mu_j^{-1} (x-\xi_j )|^{n-2}}\,  t^{-\sigma} \la_j^{\frac{n-2}4 +\frac a2}   +    t^{-\frac n2 } e^{-\frac {|x-\xi_j |^2}{4t}}   \,   \Big ]
\end{aligned} $$
To prove this, we consider first the problem
$$
\psi^1_t \ \ge\  \Delta \psi^1  + \frac{ \mu_j^{-\frac {n+2}2}} {1+  | \mu_j^{-1} (x-\xi_j )|^{2+a}}\,  t^{-\sigma} \la_j^{\frac{n-2}2}       \inn  \R^n \times (t_0,\infty).
$$
According to Lemma \ref{lema2} there is a positive supersolution of this problem with
$$\begin{aligned}  \psi^1(x,t)  \ \le &  \  C\, \Big [ \frac{ \mu_j^{-\frac {n-2}2}} {1+  | \mu_j^{-1} (x-\xi_j ) |^{a}}\,  t^{-\sigma} \la_j^{\frac{n-2}2}
  +    t^{-\frac n2 } e^{-\frac {|x-\xi_j |^2}{4t}}   \,   \Big ]
\end{aligned} $$
Then, $\psi$ satisfies \equ{bn} if $\psi = \psi^1 \eta   +  \bar\psi $ where
$\eta(x,t) =  \chi ( \bar \mu_j^{-1} (x-\xi_j ) ) $ and
$$
\bar\psi_t  \ge  \Delta \bar \psi   +    2\nn \psi^1\cdot \nn \eta  +  (\Delta\eta  - \eta_t)\psi^1
$$
Now we observe that
$$
|2\nn \psi^1\cdot \nn \eta  +  (\Delta\eta  - \eta_t)\psi^1  | \ \le \  C\, \la_j^{\frac{n-2}4 + \frac a2} t^{-\sigma}  \frac{ \bar\mu_j^{-\frac {n+2}2}} {1+  | \bar \mu_j^{-1} (x-\xi_j ) |^{n+1}}.
$$
The existence of  $\bar \psi$  with the desired bound then follows from Lemma \ref{lema2}. \qed

\medskip
For a function $h=h(x,t)$, we define the norm $\| h \|_{a,\sigma,\beta}$ as the least number $M>0$ such that
$$
|h(x,t) |\ \leq\  M \sum_{j=2}^k\big (\omega_{1j} + \omega_{2j} + \omega_{11}  + \omega_3 \big)(x,t) \foral (x,t)\in \R^n \times (t_0,\infty).
$$
Similarly, we define the the norm $\| h \|_{*,a,\sigma,\beta}$ as the least  $M$ with
\be \label{norm1*}
|h(x,t) |\ \leq\  M \sum_{j=2}^k\big (\omega_{1j}^* + \omega_{2j}^* + \omega_{11}^*  + \omega_3^* \big)(x,t) \foral (x,t)\in \R^n \times (t_0,\infty).
\ee
As a consequence of Proposition \ref{merda} we find the following estimate, fundamental for our purposes.

\begin{corollary}\label{corolnn} There exists a $C>0$ such that for all  $g$ with $\| g \|_{a, \sigma , \beta}<+\infty$ we have
\be \label{apala}
\| \psi \|_{*, a, \sigma, \beta} \leq C \| g \|_{a, \sigma , \beta}.
\ee
where $\psi=\mathcal T^{out}[g]$ is the solution of $\equ{heat}$ given by  $\equ{duh}$.

\end{corollary}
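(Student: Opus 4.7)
The plan is to reduce the corollary to a direct application of Proposition \ref{merda} via the positivity and linearity of the Duhamel operator $\mathcal T^{out}$, then verify that the Gaussian remainders produced by Proposition \ref{merda} can be absorbed into the $\omega^*$ weights defining $\|\cdot\|_{*,a,\sigma,\beta}$.

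First I observe that the heat kernel is positive, so $\mathcal T^{out}$ is monotone: if $0\le h$ and $|g|\le h$ then $|\mathcal T^{out}[g]|\le \mathcal T^{out}[h]$. By the definition of $\|g\|_{a,\sigma,\beta}=:M$ we may take
\[
h(x,t)\ =\ M\,\Big(\omega_{11}(x,t)+\omega_3(x,t)+\sum_{j=2}^{k}\bigl(\omega_{1j}(x,t)+\omega_{2j}(x,t)\bigr)\Big).
\]
By linearity of $\mathcal T^{out}$, $\mathcal T^{out}[h]$ splits as a finite sum of the contributions $\mathcal T^{out}[\omega_{11}]$, $\mathcal T^{out}[\omega_3]$, $\mathcal T^{out}[\omega_{1j}]$, $\mathcal T^{out}[\omega_{2j}]$, and each is controlled by one of the four cases of Proposition \ref{merda} applied with $g=\omega_{\cdot\cdot}$. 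This yields
\[
|\psi(x,t)|\ \le\ CM\,\Big[\omega_{11}^*+\omega_3^*+\sum_{j=2}^{k}(\omega_{1j}^*+\omega_{2j}^*)\Big](x,t)\ +\ CM\,G(x,t),
\]
where $G(x,t)$ collects the Gaussian remainders of the form $t^{-1-a/2-\sigma}e^{-|x-\xi_1|^2/4t}$ from case (1) and $t^{-n/2}e^{-|x-\xi_j|^2/4t}$ from cases (2) and (3).

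The second step, which is really the only non-bookkeeping piece, is to show that $G(x,t)$ is itself bounded by a constant multiple of $\omega_{11}^*+\omega_3^*+\sum_{j}\omega_{2j}^*$. For this I use the elementary inequality $e^{-s^2}\le C_N(1+s)^{-N}$ for every $N>0$ together with the fact that $|\xi_j(t)|=o(\mu_j(t))\to0$, so that $|x-\xi_j|+\sqrt t\simeq |x|+\sqrt t$ for all $(x,t)\in\RR^n\times(t_0,\infty)$. On the region $|x-\xi_j|\le\sqrt t$ the Gaussian is of order $t^{-d}$ for the corresponding exponent $d\in\{1+a/2+\sigma,\, n/2\}$ and is trivially dominated by $\omega_3^*\simeq t^{-(\beta-2)/2}$ once $\beta$ is at most $4+a+2\sigma$ and $\beta<n$, both of which hold with the parameter choices $a<n-2$, $2<\beta<n$ made before \equ{omega1}. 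On the complementary region $|x-\xi_j|>\sqrt t$ the Gaussian decays faster than any polynomial in $|x|$, so it is again dominated by $\omega_3^*(x,t)\simeq|x|^{-(\beta-2)}$.

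Combining the two steps yields $|\psi(x,t)|\le C M\bigl(\omega_{11}^*+\omega_3^*+\sum_{j\ge 2}(\omega_{1j}^*+\omega_{2j}^*)\bigr)$, which is precisely $\|\psi\|_{*,a,\sigma,\beta}\le C\|g\|_{a,\sigma,\beta}$ as claimed in \equ{apala}. The only real obstacle is the final pointwise absorption of the Gaussian tails into the polynomial weights $\omega_3^*$ (and, near $\xi_j$, $\omega_{11}^*$, $\omega_{2j}^*$), which just amounts to tracking the exponents and using the assumed relations among $a$, $\beta$, and $\sigma$; everything else is linearity, positivity, and a direct invocation of Proposition \ref{merda}.
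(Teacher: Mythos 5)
Your overall strategy is exactly the one the paper intends: the corollary is meant to be a direct consequence of Proposition \ref{merda} via linearity of $\mathcal T^{out}$, positivity of the heat kernel, the decomposition of $g$ according to the weights, and then absorption of the Gaussian remainders into the starred weights. The paper gives no proof (``As a consequence of Proposition \ref{merda} \dots''), so the bookkeeping is on you, and most of it you carry out correctly. In particular the Gaussian tails $t^{-n/2}e^{-|x-\xi_j|^2/4t}$ from cases (2) and (3) are indeed absorbed into $\omega_3^*$ for \emph{all} admissible $\beta<n$, as you say.

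The gap is in the absorption of the case-(1) tail $t^{-(1+a/2+\sigma)}e^{-|x-\xi_1|^2/4t}$. Write $\rho=|x-\xi_1|/\sqrt t$. At $\rho\approx 2$ (and beyond), $\omega_{11}^*$ vanishes because of the cutoff $\chi(|x-\xi_1|/\sqrt t)$, the weights $\omega_{1j}^*$ and $\omega_{2j}^*$ for $j\ge 2$ are far too small there (their scales $\mu_j,\bar\mu_j$ are tiny compared to $\sqrt t$), so the only weight available is $\omega_3^*\sim t^{-(\beta-2)/2}$, while the Gaussian tail is of size $\sim e^{-1}\,t^{-(1+a/2+\sigma)}$. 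Absorbing it into $\omega_3^*$ requires
\[
t^{-(1+a/2+\sigma)}\ \lesssim\ t^{-(\beta-2)/2}\quad\text{for all }t>t_0,
\]
i.e.\ $\beta\le 4+a+2\sigma$. You assert that this ``hold[s] with the parameter choices $a<n-2$, $2<\beta<n$ made before \equ{omega1}''. That is false: the constraints $0<a<n-2$, $\sigma>0$, $2<\beta<n$ are compatible with $\beta>4+a+2\sigma$ (e.g.\ $n=10$, $a=1$, $\sigma=0.1$, $\beta=8$). So $\beta\le 4+a+2\sigma$ is an \emph{additional} hypothesis, not a consequence of the stated ones, and the absorption step fails without it. This is the one genuine logical leap in your argument; everything else (positivity, superposition of the four cases of Proposition \ref{merda}, absorption of the $t^{-n/2}$-tails, the observation that $|\xi_j(t)|\to 0$ so $\sqrt t+|x-\xi_j|\simeq\sqrt t+|x|$) is fine. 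If you intend the corollary to be used with the later parameter choices \equ{fixthem}, you should state and verify the constraint $\beta\le 4+a+2\sigma$ there as well, since $\beta'>2+\alpha>(n+2)/2$ makes it a nontrivial restriction for large $n$.
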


\subsection{The linear inner problems}

Next we will state the necessary facts to solve the inner problems \equ{inner}.
We omit the index $j$ and then consider the linear equation  for $\phi=\phi(y,t)$

\be
\mu(t)^2 \pp_t\phi  = \Delta_y \phi  + pU(y)^{p-1}\phi    + h(y,t)  , \quad t_0 \le  t,\    |y|\le 2R,
\label{linearinner}\ee
where $\mu(t) \sim t^{-1-\sigma}$ for a suitable $\sigma>0$. Here $R$ is a large number, possibly $t$-dependent.

Our purpose is to solve \equ{linearinner} for a $\phi$ that defines a linear operator in $h$ and has good bounds in terms of $h$, provided that
certain solvability conditions for the right hand side are satisfied.

One observation is that the change of variables
$$
\tau(t) =  \tau_0 +   \int_{t_0}^t  \mu(s)^{-2}ds  \sim t^{2\sigma +3}
$$
transforms equation \equ{linearinner} into
\be
\phi_\tau  = \Delta_y \phi  + pU(y)^{p-1}\phi    + h(y,\tau) , \quad \tau_0 \le  \tau,\    |y|\le 2R.  
\label{linearinner1}\ee
Let us recall some basic facts on the elliptic problem for functions $\phi(y)$
\be \label{poto1} L_0[ \phi ] : =\Delta_y \phi   +  pU(y)^{p-1}\phi = h(y) \inn \R^n \ee
Using a decomposition in spherical harmonics
$$ \phi(y) =  \sum_{i=0}^\infty  \phi_i(|y|) \Theta_i(y/|y|) $$
where $\Theta_i$ designates a basis of eigenfunctions of the problem $-\Delta_{S^n} \Theta_i = \mu_i \Theta_i $.
The above system decouples into an infinite set of equations for the radially symmetric coefficients. The following facts are standard.
\begin{enumerate}
\item
The bounded functions satisfying $L_0[Z] = 0$ are precisely the linear combinations of the $n+1$ functions
$$
Z_i(y) =  \pp_{y_i}U (y) , \quad i=1,\ldots, n, \quad   Z_{n+1} (y) =  y\cdot \nn U(y) +  \frac{n-2}2 U(y).
$$

\item
If $ h(y) = O(|y|^{-m})$ as $|y|\to +\infty$, with $2<m<n$, then
Equation \equ{poto1} has a decaying solution $\phi(y) =   O(|y|^{2-m})$ if and only if
$$
\int_{\R^n}  h(y)Z_{i}(y) \, dy  = 0 \foral i=1,\ldots, n, n+1.
$$
In the radial case, this is what formula \equ{formula} directly yields.

\item   The eigenvalue problem
$$
\label{eigen0}
L_0[ f ] = \la f  , \quad  f \in L^\infty(\R^n).
$$
has a unique positive eigenvalue $\la_0>0$, which is simple and with a positive eigenfunction $Z_0(y)$ with
$$Z_0(y) \sim  |y|^{-\frac{n-1}2} e^{-\sqrt{\la_0 }\,  |y|}  \ass |y| \to \infty, $$
which we normalize so that $\int_{\R^n}Z_0^2 dy = 1. $

\end{enumerate}

While $Z_0$ does not enter in solvability conditions in the elliptic problem \equ{poto1}, it plays a crucial role in solving for
$\phi$ uniformly bounded
its parabolic counterpart \equ{linearinner1} in entire space, say
\be \label{poto2} \phi_\tau =   L_0[ \phi ] + h(y,\tau)  \inn \R^n\times (\tau_0 ,\infty) . \ee
In fact if we set
$$p(\tau) = \int_{\R^n}  \phi(y,\tau)Z_0(y)\, dy, \quad   q(\tau) = \int_{\R^n}  h(y,\tau)Z_0(y)\, dy .$$
then we compute
$$
\frac {dp}{d\tau} (\tau) -\la_0 p(\tau)  = q(\tau) .
$$
This ODE has a unique bounded solution
$$
p (\tau)  =      \int_\tau^\infty   e^{\la_0(\tau- s) }\, q(s)\, ds
$$
and hence its initial condition is imposed: we need one linear constraint,
on  the initial value $\phi(y,0)$,
$$
\int_{\R^n}  \phi(y,0)Z_0(y)\, dy\  =\   \langle \ell,  h \rangle :=      \int_{\tau_0}^\infty   e^{ -\la_0 s }\, \int_{\R^n}  h(y,s)Z_0(y)\, dy\, ds\, .
$$


\medskip
Let us consider then the initial value problem for \equ{poto2}
\be \label{poto22}\left\{ \begin{aligned}   \phi_\tau = &  L_0[ \phi ] + h(y,\tau) \quad \inn \R^n\times (\tau_0,\infty)\\ \phi(x,0) = &\ell Z_0(x) \end{aligned} \right. \ee
Let us assume that for some $2<m<n$ and $\nu>0$ the right hand side $h$ satisfy the decay conditions
\be\label{sss}  h(y,\tau) \sim  \frac {\tau^{-\nu}}{1+ |y|^m}, \quad \int_{\R^n} h(y,\tau)\,Z_i(y)\,dy\, =\, 0 , \, i=1, \ldots , n+1, \ee
for all $\tau \in (\tau_0 , \infty)$.
Let us consider as an approximate solution that obtained by solving the elliptic equation
$$
L_0[ \bar \phi ] + h(y,\tau)=0 \quad \inn \R^n
$$
so that \be \label{vpp}\bar \phi(y,\tau ) \sim  \frac {\tau^{-\nu}}{1+ |y|^{m-2}}, \ee
and formally, the error of approximation is given by
$$
 \quad - \bar \phi_\tau (y,\tau ) \sim  \frac {\tau^{-\nu-1}}{1+ |y|^{m-2}}.$$
With this choice  we obtain an improvement in the region  $|y|\ll \sqrt{\tau}$ for error of approximation, since there  $-\bar \phi_\tau$
has smaller size compared with $h$.
We would like to find a true solution of \equ{poto22} with the behavior \equ{vpp}, but according to the above discussion this can only be achieved  with the choice $\alpha = \langle \ell , h  \rangle$. It is then natural to consider the problem restricted to a ball $B_{2R}$ in $\R^n$ as in \equ{linearinner1} where
$R= R(\tau) \ll \sqrt{\tau} .  $  In fact what we can establish is that for $h$ satisfying \equ{sss} there is a solution $\phi$ of \equ{poto22} defined in $B_{2R}$ that satisfies an estimate similar but worse than \equ{vpp}.  That estimate however coincides with \equ{vpp} for
$|y|\sim R $, which is enough for our purposes. Let us be more precise.
Let us denote, for $R(t) = t^\ve$, $\ve>0 $ small and fixed,
$$ \DD_{2R} =  \{ (y,t)  \ /\ t\in (t_0,\infty) , \  |y|\le  2R(t) \} .   $$
 and consider the initial value problem
\begin{equation}\label{linear} \left\{ \begin{aligned}
\mu^2 \phi_t\ &= \  \Delta_y \phi   +  pU(y)^{p-1}\phi   + h(y,t )  \, \inn \DD_{2R}\\
   \phi(y,0)\ &=\ \ell Z_0(y)  \inn B_{2R},
\end{aligned}\right.
\end{equation}
for some constant $\ell $, under
the orthogonality conditions, for $t \in (t_0 , \infty)$
\begin{equation}\label{uffa1}
\int_{B_{2R}} h(y,t)\,  Z_i (y) \, dy = 0  \foral i= 1,\ldots,{n+1}  .
\end{equation}

\medskip
Let us fix numbers $0<a<n-2$ and $\nu>0$ and define the following norms. We let  $\|h\|_{2+a,\nu}$ be the least number $K$ such that
$$
  |h(y,t) |  \ \le    \   K \frac{\mu (t)^{\nu}} { 1+ |y|^{2+a}} \inn \DD_{2R}
\label{norma**}$$
According to the above discussion, in the best of the worlds we would like to find a solution to
\equ{linear} that satisfies $\|\phi\|_{* a,\nu}\le C\|h\|_{2+a,\nu}$.
We cannot quite achieve this but, let us define $\|\phi\|_{* a, \nu}$ to be the least number $K$ with
\be
  |\phi(y,t) |  \ \le    \  K  R^{n+1-a} \frac { \mu (t)^{\nu}} { 1+ |y|^{n+1}} \inn \DD_{2R}.
\label{norma*}\ee
We notice that
$$
|\phi(y,t) |  \le   \|\phi\|_{* a, \nu} \frac{\mu (t)^{\nu}} { 1+ |y|^{a}} \hbox{ for }
$$


\medskip
The following is the key linear result associated to the inner  problem.
\begin{lemma} \label{lema11} There is a $C>0$ such
For all sufficiently large $R>0$ and any  $h$  with   $\|h\|_{2+a,\nu} <+\infty$
that satisfies relations $\equ{uffa1}$
there exist linear operators $$\phi = \TT^{in}_\mu [h],\quad  \ell = \ell [h] $$ which solve Problem $\equ{linear}$ and define linear operators of $h$
with
$$
|\ell [h]|  +    \| (1+|y|)\nn_y \phi \|_{*,a, \nu} \ +\ \|\phi\|_{*,a, \nu} \ \le \ C\,\|h\|_{ \nu, 2+a}.
$$
\end{lemma}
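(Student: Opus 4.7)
The plan is to reduce \eqref{linear} to an autonomous parabolic problem, then decompose $\phi$ along the spectral directions of $L_0 = \Delta_y + pU^{p-1}$, handling each piece separately: the unstable eigenfunction $Z_0$ will fix the constant $\ell$, the zero modes $Z_1,\ldots,Z_{n+1}$ are killed by the orthogonality hypothesis \eqref{uffa1}, and the stable subspace is treated by an elliptic barrier together with a Dirichlet correction that accounts for the cut-off at $|y|=2R$.

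First I would introduce the new time $\tau = \tau_0 + \int_{t_0}^t \mu(s)^{-2}\,ds$, which transforms \eqref{linear} into the autonomous equation $\phi_\tau = L_0[\phi] + \tilde h(y,\tau)$ on $B_{2R(\tau)}\times(\tau_0,\infty)$ with zero Dirichlet data at $|y|=2R$. Under $\mu(t)\sim t^{-1-\sigma}$ this is a polynomial time change, so the bound on $h$ becomes $|\tilde h|\le C\|h\|_{2+a,\nu}\,\mu^\nu/(1+|y|^{2+a})$, and the solvability conditions \eqref{uffa1} are preserved at each $\tau$. Next, decompose
$$
\phi(y,\tau) = p_0(\tau)Z_0(y) + \sum_{i=1}^{n+1} p_i(\tau)Z_i(y) + \phi^\perp(y,\tau),
$$
with $\phi^\perp$ $L^2(B_{2R})$-orthogonal to $\mathrm{span}\{Z_0,\ldots,Z_{n+1}\}$. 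Testing the equation against $Z_i$ for $i=1,\ldots,n+1$: since $L_0 Z_i = 0$ and $\int \tilde h Z_i=0$, the coefficients $p_i(\tau)$ satisfy ODEs whose forcing is only an exponentially small boundary contribution (because $Z_i$ decays polynomially but the cut-off errors come from derivatives on $\partial B_{2R}$ where $\phi$ vanishes), so one can take $p_i\equiv 0$ at main order.

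Testing against $Z_0$ gives $p_0' - \lambda_0 p_0 = q(\tau) + O(\text{exp. small})$ with $|q(\tau)|\le C\|h\|_{2+a,\nu}\mu^\nu$, whose unique bounded trajectory is of the form $p_0(\tau) = \int_\tau^\infty e^{\lambda_0(\tau-s)}q(s)\,ds$ as explained in the text preceding the lemma. Evaluating at $\tau=\tau_0$ and using $\|Z_0\|_{L^2}=1$ determines $\ell := p_0(\tau_0)$ as a bounded linear functional of $h$, with $|\ell[h]|\le C\|h\|_{2+a,\nu}$. On the stable remainder $\phi^\perp$, the operator $L_0$ has a spectral gap below zero, so Duhamel plus standard energy estimates provide an interior control of order $\mu^\nu/(1+|y|^a)$ matching the radial elliptic solution furnished by \eqref{formula}. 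To upgrade this to the stronger norm $\|\cdot\|_{*,a,\nu}$, which carries the factor $R^{n+1-a}/(1+|y|^{n+1})$ near the boundary, I would write $\phi^\perp = \bar\phi\,\eta + \tilde\phi$, where $\bar\phi$ is the elliptic solution from \eqref{formula} and $\eta$ is a smooth cut-off at scale $R$. The remainder $\tilde\phi$ solves a parabolic Dirichlet problem with source localized in the annulus $|y|\sim R$ and boundary data of size $\mu^\nu/R^a$; a time-independent positive supersolution of $L_0$ decaying like $|y|^{-(n+1)}$ in the far field yields $|\tilde\phi|\le C\|h\|_{2+a,\nu}\,R^{n+1-a}\mu^\nu/(1+|y|^{n+1})$ by the maximum principle. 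The gradient bound of order $(1+|y|)^{-1}$ then follows from interior parabolic Schauder estimates applied on unit-scale balls after the $\tau$-rescaling, exactly as in the proof of Lemma~\ref{lema2}.

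The main obstacle will be the careful treatment of the unstable mode on the expanding domain $B_{2R(\tau)}$: because $Z_0$ has only exponential decay, projecting the equation onto $Z_0$ produces genuine boundary errors coming both from the moving domain and from the non-compactness of $Z_0$, and one must verify that these errors are much smaller than the principal ODE terms, so that the prescription $\ell = p_0(\tau_0)$ genuinely gives a linear, bounded operator $\ell[h]$ and a bounded trajectory $\phi$ on the whole parabolic cylinder. A secondary difficulty is designing a single barrier that interpolates between the interior rate $(1+|y|)^{-a}$ dictated by the weighted forcing and the sharper boundary rate $R^{n+1-a}(1+|y|)^{-(n+1)}$ forced by the Dirichlet correction; here one uses that, apart from $\{Z_1,\ldots,Z_{n+1}\}$, the elliptic operator $L_0$ admits no bounded nontrivial solutions, which legitimizes $|y|^{-(n+1)}$ as a valid comparison function in the far field.
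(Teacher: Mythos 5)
The paper's own proof of Lemma~\ref{lema11} is essentially a one-line citation: after the time change $\tau = \tau_0 + \int_{t_0}^t\mu^{-2}$, it defers entirely to Proposition~5.5 and the gradient estimates in the proof of Proposition~7.2 of \cite{cdm}. Your sketch reconstructs the content of that cited result rather than giving a genuinely different route; the spectral decomposition along $Z_0,\dots,Z_{n+1}$, the ODE $p_0' - \lambda_0 p_0 = q$ whose unique bounded trajectory fixes $\ell$, and the elliptic ansatz $\bar\phi$ plus cut-off correction are exactly the ingredients used there and anticipated by the discussion preceding \eqref{linear} in the paper. So the architecture is aligned, and you have correctly identified the two real difficulties (the unstable mode on a slowly expanding domain, and the barrier matching the weight $R^{n+1-a}/(1+|y|^{n+1})$).

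However, two points in your barrier step need repair. First, a function behaving like $|y|^{-(n+1)}$ is a \emph{sub}solution of $\Delta$ in the far field (since $\Delta|y|^{-q}=q(q-(n-2))|y|^{-q-2}>0$ for $q>n-2$), not a supersolution; the nondegeneracy of $L_0$ does not by itself legitimize it as a comparison function. Fortunately the weight \eqref{norma*} is deliberately loose: for $|y|\le 2R$ one has $R^{n+1-a}/(1+|y|^{n+1})\gtrsim R^{-a}$, so a \emph{uniform} bound $|\tilde\phi|\lesssim\mu^\nu R^{-a}$ on $B_{2R}$ (which is what a harmonic-extension heuristic actually gives for a forcing concentrated in the annulus $|y|\sim R$) already implies the required norm bound — you do not need, and cannot get, the extra pointwise decay in $|y|$ for $\tilde\phi$. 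Second, your decomposition $\phi^\perp=\bar\phi\,\eta+\tilde\phi$ destroys the $Z_0$-orthogonality: $\bar\phi\,\eta$ has a nonzero projection onto $Z_0$, so $\tilde\phi$ is not a priori stable under the flow of $L_0$, and a naive maximum-principle or energy argument for $\tilde\phi$ can be contaminated by the unstable direction. One must either re-project $\tilde\phi$ onto $Z_0^\perp$ (absorbing the $Z_0$-component of $\bar\phi\,\eta$ into the definition of $p_0$ and hence into $\ell$), or argue as in \cite{cdm} by an a priori estimate proved by contradiction/blow-up, where positivity of the potential $pU^{p-1}$ never needs a direct maximum principle for $L_0$. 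With these repairs the plan is sound and matches the cited result.
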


\proof
As we have discussed,  Problem \equ{linear} is equivalent to
$$ \begin{aligned}
 \phi_\tau\ &= \  \Delta_y \phi   +  pU(y)^{p-1}\phi   + h(y,\tau )  \, \inn  |y|\le 2R, \quad \tau\in (\tau_0 ,\infty) \\
   \phi(y,0)\ &=\ \ell Z_0(y)  \inn B_{2R}.
\end{aligned}
$$
The result  then follows from Proposition 5.5 and the gradient estimates in the proof of Proposition 7.2  in \cite{cdm}.
We remark that we also have the validity of a H\"older estimate in space and time with the natural forms.
\qed





\section{Solving the outer and inner problems}\label{s5}

In this section we will solve the outer-inner gluing system \equ{inner}-\equ{outer}, setting it up as a system in $\phi$, $\psi$ and $\vec \mu_1$ that
will involve a fixed point formulation in terms of the linear inverses built in the previous section.

\subsection{The outer problem}
Let us denote, for $R(t) = t^\ve$, $\ve>0 $ small and fixed as in \eqref{epsilon},
$$ \DD_{2R} =  \{ (y,t)  \ /\ t\in (t_0,\infty) , \  |y|\le  2R(t) \}    $$
and  consider a $k$-tuple of $C^1$ functions $$\vec \phi(y,t) = (\phi_1(y,t),\ldots, \phi_k(y,t)), \quad (y,t)\in \DD_R   .  $$

In addition we consider a bounded function $z_*(x) $ that satisfies the assumption \equ{decay} in Corollary \ref{coro1} namely
\be\label{decay1}  |z_*(x)| \ \le \   \frac \delta {1 +|x|^\alpha}   \ee
with $\alpha > \frac {n-2}2$.
Let us consider the solution
$Z^*(x,t)$ of the heat equation
$$ \left \{
\begin{aligned}
Z^*_t  &= \Delta Z^* \inn  \R^n \times (t_0,\infty)\\
Z^*(x,t_0)& = z_*(x).
\end{aligned}  \right.
$$
Then we have
\be \label{estZ*}
\left| Z^* (x,t) \right| \ \le \  {C\over (\sqrt{t} + |x| )^\alpha }.
\ee
Indeed, the solution of the initial value problem is given explicitly by the convolution formula
$$
Z^* (x,t) = {1\over (4\pi t)^{n \over 2} } \int_{\R^n} e^{-{|x-y|^2 \over 4t}} \, z_* (y) \, dy.
$$
It is not restrictive to think that $\alpha <n$. By the decay assumption \eqref{decay1}, for some constant $C$ whose value changes from line to line, we have
$$
\begin{aligned}
|Z^* (x,t) | &\leq { C \over t^{n\over 2}} \int_{\R^n} {e^{-{|x-y|^2 \over 4t}} \over 1+|y|^\alpha }\, dy \leq C \int_{\R^n} {e^{-{|z|^2 \over 4}} \over 1+|x+\sqrt{t} z|^\alpha }\, dz \\
&\leq C {1\over t^{\alpha \over 2}}  \int_{\R^n} {e^{-{|z|^2 \over 4}} \over |{x \over \sqrt{t}}+  z|^\alpha }\, dz \leq C {1\over (t^{1\over 2} + |x| )^\alpha },
\end{aligned}
$$
which concludes the proof of \eqref{estZ*}.

Let us set in the outer problem \equ{outer},
$$
\Psi(x,t)  =  Z^*(x,t) + \psi(x,t) .
$$
and impose initial condition $0$ for $\psi$.
Then the outer problem in terms of $\psi$ becomes
\be \label{outer111}\left\{ \begin{aligned}
\psi_t  =& \Delta_x\psi   +   G( \vec\phi ,  \psi; \vec\mu_1, \vec \xi ) \inn \R^n\times (t_0,\infty)   \\
\psi(\cdot,t_0) =& 0 \inn \R^n,
 \end{aligned}\right. \ee
where  $$G (\vec\phi, \psi;\vec\mu_1 , \vec \xi  )  =   V \psi  +  B[\vec\phi] + VZ^* +    \mathcal N ( \vec\phi , Z^*+ \psi; \vec \mu_0+ \vec \mu_1, \vec \xi)  +   E^{out} $$
and the components of $G$ are defined in \equ{poto}. We express \equ{outer111} as
\be\label{out}
\psi\  =\  \TT^{out}[\, G(\vec \phi,\psi; \vec\mu_1, \vec \xi)\, ] .
\ee
where $\psi=\TT^{out}[g]$ is the solution of the heat equation given by \equ{duh}.

\subsection{The inner problem}
We will formulate the inner problem \equ{inner} for the functions $\phi_j(y,t)$  using the setting introduced in Lemma \ref{lema11}. 
Let us write Problem \equ{inner} in the form
 \be \label{inner12}
  \mu_j^2 \pp_t \phi_j  =  \Delta_y \phi_j +  pU(y)^{p-1}  \phi_j  +  H_j (\psi,\vec\mu_1, \vec \xi) \inn \DD_{2R}
\ee
where
$$
H_j (\psi,\vec\mu_1, \vec \xi ) = \tilde H_j (\psi,\vec\mu_1, \vec \xi )  +    D_j[\vec\mu_1, \vec \xi]
$$
with
\be \label{Htilde}
\tilde H_j (\psi,\vec\mu_1, \vec \xi ) = \zeta_j U(y)^{p-1}  \mu_j(t)^{\frac{n-2}2} (Z^* + \psi )
\ee
and, as in \equ{e55}-\eqref{D0}  
$$
\begin{aligned}
 D_j[\mu_1 , \vec \xi ](y,t) &=  (\dot \mu_{0j} \mu_{1j}(t) + \mu_{0j} \dot \mu_{1j}(t) )  Z (y)  +   \frac{n-2}2 p U (y)^{p-1} U(0) \la_{0j}(t)^{n-4 \over 2}   {\mu_{1j} \over \mu_{0,j-1} } (t)\\
 &+ \mu_j \dot \xi_j \cdot \nabla U (y) , \quad {\mbox {for}} \quad j=2, \ldots , k\\
 D_1[\mu_1 , \vec \xi ](y,t) &=  \dot (1+ \mu_{11} ) [ \dot \mu_{11} Z(y) + \dot \xi_j \cdot \nabla U(y) ] .
 \end{aligned}
 $$
First we modify the right hand side of \equ{inner12}  to achieve the solvability conditions \equ{uffa1}, and introducing an initial condition as in \equ{poto22}. We consider the problem
\be\label{inner21} \left\{
\begin{aligned}
\mu_j^2 \pp_t \phi_j  =  &\Delta_y \phi_j +  pU(y)^{p-1}  \phi_j  + H_j(\psi,\vec\mu_1 , \vec \xi) - \sum_{i=1}^{n+1} d_{ji}[\psi,\vec\mu_1, \vec \xi]\, Z_i  \inn \DD_{2R}\\
\phi^1(\cdot, t_0) = &\ell Z_0(y) .
\end{aligned} \right.
\ee
where
$$\begin{aligned}
d_{ji}[\psi,\vec\mu_1 , \vec \xi ](t)\ = &\  \frac{\int_{B_{2R}}  H_j(\psi,\mu_1, \vec \xi )(y,t) Z_i (y)\, dy }{ \int_{B_{2R}}   Z_i (y)^2\, dy}.
\end{aligned}
$$
Let us denote by $ \TT^{in}_{\mu_j } $ the linear operator in Lemma \ref{lema11} for $\mu = \mu_j$. Then \equ{inner21} is solved if the following equation holds
\be\label{inj}
\phi_j\  =\  \TT^{in}_{\mu_j}[\, H_j(\psi,\vec\mu_1, \vec \xi )\, ] , \quad j=1, \ldots , k.
\ee
In order to solve the full equation \equ{inner}, we couple these equations with
\be\label{d22}
 d_{ji} [\psi,\vec \mu_1, \vec \xi ] (t) = 0 \foral t\in (t_0,\infty) , \quad j=1, \ldots , k, \, i=1, \ldots , n, n+1.
\ee
The equations \eqref{d22} can be expressed in a quite simple form: Equations \eqref{d22} for $i=n+1$,  $d_{j, n+1} [ \psi , \vec \mu_1 ,\vec \xi  ] = 0 $ are equivalent to
\be
\dot \mu_{1j} + {n-4 \over 2} {\alpha_j \over t}\mu_{1j} +  M_{j, n+1} [\psi , \vec \mu_1 , \vec \xi ]  =0 , \quad t \in (t_0 , \infty ),
\label{bbb}\ee
where $M_{j, n+1} = M_{j, n+1} [\psi , \vec \mu_1 , \vec \xi ]$ is given by  $$
M_{j, n+1} [\psi , \vec \mu_1, \vec \xi ] (t) = {\theta(t)\over t R^2}  \mu_{1j}(t)+
  {\mu_j^{n-2 \over 2}  \over \mu_{0j} }(t){\int_{B_R} p U^{p-1} (y) Z_{n+1} (y) ( \psi + Z_* ) (\mu_j y +\xi_j  , t) \, dy \over \int_{B_R} Z_{n+1}^2 (y) \, dy},
$$
and $\theta(t)$ is a bounded function. Furthermore, Equations \eqref{d22} for $i=1, \ldots , n$,  $d_{j, i} [ \psi , \vec \mu_1 ,\vec \xi  ] = 0 $ are equivalent to
\be \label{bbbp}
\dot \xi_j + M_j [\psi , \vec \mu_1 , \vec \xi] =0 ,
\ee
where
$$
M_{j} [\psi , \vec \mu_1 , \vec \xi] = \mu_j^{n-4 \over 2} (t){\int_{B_R} p U^{p-1} (y) \nabla U(y) ( \psi + Z_* ) (\mu_j y +\xi_j  , t) \, dy \over \int_{B_R} ({\partial U \over \partial y_j} )^2 (y) \, dy} .
$$
\begin{proof}[Proof of \eqref{bbb}-\eqref{bbbp}] Formula \eqref{bbbp} follows by a straightforward computation. Let us consider \eqref{bbb}.
For each $j$, we have
\be\label{gg1}
\begin{aligned}
d_{j, n+1} [\psi , \vec \mu_1 , \vec \xi  ](t)\ =&\quad
  (\dot \mu_{0j} \mu_{1j}(t) + \mu_{0j} \dot \mu_{1j}(t)   )\\ & \quad - {n-2 \over 2} { U(0) \int_{B_R} p U^{p-1} (y) Z_{n+1} (y) \, dy \over
\int_{B_R} Z_{n+1}^2 (y) \, dy} \la_{0j}^{n-4 \over 2} {\mu_{1j} \over \mu_{0, j-1}}(t) \\
&\quad +
{  \mu_j^{n-2 \over 2} \int_{B_R} p U^{p-1} (y) Z_{n+1} (y) (\psi + Z^* ) (\mu_j y +\xi_j  , t) \, dy \over \int_{B_R} Z_{n+1}^2 (y) \, dy}
\end{aligned}
\ee
 Since
$$
\begin{aligned}
\int_{B_R} p U^{p-1} (y) Z_{n+1} (y) \, dy &=
\int_{\R^n } p U^{p-1} (y) Z_{n+1} (y) \, dy + O({1\over R^2} )\\
\int_{B_R} Z_{n+1}^2 (y) \, dy &= \int_{\R^n } Z_{n+1}^2 (y) \, dy + O({1\over R^{n-4}} ),
\end{aligned}
$$
we get
$$
- { U(0) p \int_{B_R}  U^{-1} (y) Z_{n+1} (y) \, dy \over
\int_{B_R} Z_{n+1}^2 (y) \, dy} = c + O({1\over R^2} )
$$
where $c$ is the positive constant defined in \eqref{defc}.
We conclude that the first two terms in \eqref{gg1} are given by
$$
\begin{aligned}
&(\dot \mu_{0j} \mu_{1j}(t) + \mu_{0j} \dot \mu_{1j}(t) ) - {n-2 \over 2} { U(0) \int_{B_R} p U^{p-1} (y) Z_{n+1} (y) \, dy \over
\int_{B_R} Z_{n+1}^2 (y) \, dy} \la_j^{n-4 \over 2} {\mu_{1j} \over \mu_{0, j-1}}\\
&\quad  =
\mu_{0j} \dot \mu_{1j} + {\mu_{1j} \over \mu_{0j}} \left[ \mu_{0j} \dot \mu_{0j} + {n-2 \over 2} (c+ O({1\over R^2} ) ) \la_{0j}^{n-2 \over 2} \right]
\\
& \quad = \mu_{0j} \left[ \dot \mu_{1j} + {n-4 \over 2} (c + O({1\over R^2} )) { \la_{0j}^{n-2 \over 2} \over \mu_{0j}^2} \mu_{1j} \right] \\
& \quad =  \mu_{0j} \left[ \dot \mu_{1j} + {n-4 \over 2}  { \alpha_j   \over t}  \mu_{1j} +    {  O({1\over R^2} )  \over t}  \mu_{1j}  \right].
\end{aligned}
$$
\end{proof}
Next we formulate Equation  \equ{bbb} as a fixed point problem using the initial value problem
\be\label{poto5}\left \{
\begin{aligned}
\dot \mu + {n-4 \over 2} {\alpha_j \over t}\mu\  = &\ \beta (t), \quad t \in (t_0 , \infty )\\
\mu_{1j}(t_0) \ = &\  0
\end{aligned}\right. \ee
We recall that $\mu_{0j} (t) \sim t^{-\alpha_j } $ and we want to solve this equation for a function $\mu(t)$ with a decay slightly faster than this. For a number $b >0$ and a function $g(t)$ we define
\be\label{normg}
\|g\|_{b} :=  \sup_{t>t_0} | t^{b} g(t)|
\ee
The unique solution $\mu(t)$ of  \equ{poto5} defines a linear operator of $\beta(t)$ represented as
$$
\mu(t) = {\mathcal S}_j [\beta](t) := t^{-{n-4 \over 2} \alpha_j} \int_{t_0}^t s^{{n-4 \over 2} \alpha_j} \beta (s) \, ds
$$
Clearly, if $b < {n-4 \over 2} \alpha_j $ for any $j=2, \ldots , k$, we have the validity of the uniform $C^1$-estimate
\be
\|\dot\mu\|_{ b +1}   + \|\mu\|_{b }   \le C \|\beta\|_{b +1} ,\quad      \mu(t) = {\mathcal S}_j [\beta](t)
\label{boundS}\ee
Also, we write Equations \eqref{bbbp} as a fixed point problem using the solution to
$$
\dot \xi = \Xi (t), \quad t \in (t_0 , \infty)
$$
defined by
$$
\xi(t) = {\mathcal P} (\Xi ) := \int_{t}^\infty \Xi (s) \, ds,
$$
for vector-valued function $\Xi$. We have the validity of the uniform $C^1$-estimate
$$
\|\dot\xi\|_{ b +1}   + \|\xi\|_{b }   \le C \|\Xi\|_{b +1} ,\quad      \xi(t) = {\mathcal P} [\Xi](t)
$$

We formulate  equations \eqref{d22} as follows
$$
\left\{ \begin{aligned}
 \mu_{1j} & = {\mathcal S}_j  [-M_{j, n+1} (\psi , \vec \mu_1 , \vec \xi ) ] \\
 \xi_j &= {\mathcal P} [-M_{j} (\psi , \vec \mu_1 , \vec \xi ) ] .
 \end{aligned}
 \right.
$$

\subsection{The system}
Solving the system of $(k+1)$ equations given in  \eqref{inner}-\eqref{outer} reduces to solving \eqref{out}-\eqref{inj}-\eqref{d22} in $\psi$, $\vec \phi$, $\vec \mu_1$ and $\vec \xi $. We formulate the system \eqref{out}-\eqref{inj}-\eqref{d22} as follows
\be\label{sistema}
\begin{aligned}
\psi  \ =  &\ \TT^{out} [ G( \vec \phi, \psi, \vec \mu_1 , \vec \xi ) ]\\
\phi_j\ =  &\  \TT^{in}_{\mu_j}  [ \ttt H_j ( \psi , \vec \mu_1 , \vec \xi ) ] , \quad j=1,\ldots, k \\
\mu_{1j}\  = &\  \mathcal S_j [ M_{j, n+1} (\psi, \vec \mu_1 , \vec \xi)] , \,  \quad j=1,\ldots, k \\
\xi_j \ =&\ \mathcal P [ M_j (\psi, \vec \mu_1 , \vec \xi)] , \,  \quad j=1,\ldots, k.
\end{aligned}
\ee
We now fix the number $\sigma $ introduced in \eqref{assmu} to be some small, positive number satisfying $\sigma < {n-6 \over 2} \alpha_j$ for all $j=2, \ldots , k$. We assume in what follows that
\be\label{cotmu}
\|\vec \mu_1\|_\sigma :=    \|\dot \mu_{11} \|_{1+\sigma }+  \sum_{j=2}^k \|\dot \mu_{1j} \|_{1+ \alpha_j +\sigma }  +  \|\mu_{1j}\|_{\alpha_j +\sigma} \le 1
\ee
and
\be \label{cotpoint}
\|\vec \xi\|_{\diamondsuit,\sigma} :=     \sum_{j=1}^k \|\dot \xi_{j} \|_{1+ \alpha_j +\sigma }  +  \|\xi_{j}\|_{\alpha_j +\sigma} \le 1
\ee
where the norm $\| \ \|_b$ is defined in \equ{normg}.
The function $\psi$ will naturally be measured in the norm  \equ{norm1*}.
Let us assume that $\|\psi\|_{*,\sigma',a,\beta'} <+\infty$ for some number $\sigma'> \sigma$, $0<a<1$  and $\beta' > 2+\alpha$
where the number $\alpha>\frac {n-2}2$ chosen in the bound
\equ{estZ*} for $Z^*$. Here  $\sigma$ is already fixed in bound \equ{cotmu}.
Then the following pointwise estimate for the operator $\tilde H_j$ in \eqref{Htilde} holds:
\be \label{estihj}\begin{aligned}
|\ttt H_j(\psi, \vec \mu_1 , \vec \xi ) (y,t)|  \ \le & \ \frac C{1+|y|^4} \big [ \,
\| \mu_j(t)^{\frac{n-2}2}  (\psi+ Z_*) (\cdot, t) \|_{L^\infty( R^{-1}\mu_j <|x-\xi_j|< R\mu_j)}  \big]\\ \le&\
\frac C{1+|y|^4} \, {\la_j^{\frac{n-2}2} t^{-\sigma'}}\big [ \,
\|\psi\|_{*,\sigma',a,\beta'} +  1 \big ].
\end{aligned} \ee
Consistently, for the operator $M_{j, n+1}$ we find, using that $|\mu_{0j}^{-1} \la_j^{\frac{n-2}2}|\le Ct^{-\alpha_j-1}$,
\be\label{hj1}
|M_{j, n+1} (\psi, \vec \mu_1 , \vec \xi )(t)|  \ \le\  \, {   t^{-\alpha_j -1 -\sigma'}}\big [ \,
\|\psi\|_{*,\sigma',a,\beta'} +  1 \big ].
\ee
Moreover, consequence of Lemma \ref{lema2}, we have
\be \label{estihjg}\begin{aligned}
|\ttt H_j(\psi, \vec \mu_1 , \vec \xi ) (y,t)& - \ttt H_j(\psi, \vec 0 , \vec 0 ) (y,t)|  \\
\ \le & \ \frac C{1+|y|^4} \big [ \,
\| \mu_j(t)^{\frac{n}2}  \nabla_x (\psi+ Z_*) (\cdot, t) \|_{L^\infty( R^{-1}\mu_j <|x-\xi_j|< R\mu_j)}  \big]\\ \le&\
\frac C{1+|y|^4} \, {\la_j^{\frac{n-2}2} t^{-\sigma'} \over R }\big [ \,
\|\psi\|_{*,\sigma',a,\beta'} +  1 \big ].
\end{aligned} \ee
Consistently, for the operator $M_{j}$ we find, using that $|\mu_{0j}^{-1} \la_j^{\frac{n-2}2}|\le Ct^{-\alpha_j-1}$,
\be\label{hj1g}
|M_{j} (\psi, \vec \mu_1 , \vec \xi )(t)|  \ \le\  \, {   t^{-\alpha_j -1 -\sigma'} \over R }\big [ \,
\|\psi\|_{*,\sigma',a,\beta'} +  1 \big ].
\ee
Bounds \equ{estihj}-\eqref{estihjg} and \equ{hj1}-\eqref{hj1g} roughly  tell us  that in the inner problems
$\phi_j = \TT^{in}_{\mu_j} [H_j ] $, the norm
$\|\phi_j\|_{*,a,\nu_j}$  in \equ{norma*} is expected to be bounded, where $0<a \le 2$, $\mu= \mu_j$ and $\nu_j$ is the power such that $\mu_j(t)^{\nu_j}  \sim t^{-\alpha_j-1-\sigma  }$. Let us also fix $a=1$. We write in what follows
\be\label{cotaphi}
 \|\phi_j\|_{j,\sigma} := \|\phi_j\|_{*,1,\nu_j}, \quad   \|\vec\phi\|_{\sigma} :=  \sum_{j=1}^k \|\phi_j \|_{j,\sigma }  .
\ee
Let us choose numbers
\be \label{fixthem}
0<a<1,  \quad 0<\sigma< \sigma' < \sigma'' <1, \quad     2+ \alpha <\beta'  <  \alpha p  \quad \beta''= p\alpha .
\ee
and measure $\psi$ in the norm  $\|\psi\|_{*,a, \sigma' ,\beta'} $.
A major role in the rest of the proof  will be played  by the following estimate for the operator $G$.

\begin{prop}\label{PestG} Assume the parameters $\mu_j $ have the form \eqref{muj0} with $\vec \mu_1$ satisfying $\equ{cotmu}$, and the points $\vec \xi$ satisfy \eqref{cotpoint}. Then there exists   $\ell >0$ such that for all sufficiently large $t_0$ we have
\be\label{estG}
\| G (\vec\phi, \psi;\vec\mu_1 ,\vec \xi)  \|_{a,\sigma'', \beta''} \ \leq\   t_0^{-\ell} \big( 1+ \| \psi \|_{*,a, \sigma' , \beta'} +
\| \psi \|^p_{*,a, \sigma' , \beta'}+  \| \vec \phi \|_\sigma
 + \| \vec \phi \|_\sigma ^p  \big).
\ee
\end{prop}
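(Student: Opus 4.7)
The plan is to estimate separately each of the five constituents of
$$G(\vec\phi,\psi;\vec\mu_1,\vec\xi) \,=\, V\psi + VZ^* + B[\vec\phi] + \mathcal N(\vec\phi,Z^*+\psi;\vec\mu_0+\vec\mu_1,\vec\xi) + E^{out},$$
and in each case bound the term in the norm $\|\cdot\|_{a,\sigma'',\beta''}$ by $t_0^{-\ell}$ times the quantity on the right-hand side of \eqref{estG}. The strategy is region-by-region: divide $\R^n\times(t_0,\infty)$ into the inner zones $|x-\xi_j|\le R\mu_j$, the intermediate annuli $R\mu_j\le|x-\xi_j|\le \bar\mu_j$ governed by $\chi_j$, and the far zone $|x|\gtrsim\sqrt t$; in each region identify which of the weights $\omega_{1j},\omega_{2j},\omega_{11},\omega_3$ dominates on the right, and which of $\omega_{1j}^*,\omega_{2j}^*,\omega_{11}^*,\omega_3^*$ controls $\psi$ and $Z^*$ on the left.

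First I would handle $E^{out}$. Using the expansion \eqref{e3} together with the definition of $D_j$ in \eqref{e55}--\eqref{D0}, the error $E^{out}=S[u_*]-\sum_j\mu_j^{-(n+2)/2}D_j\eta_j$ decomposes into $\bar E_{11}+\bar E_2$, the nonlinear self-interaction piece $N_{\bar U}[\varphi_0]$, the $\varphi_{0j}$ commutator-with-cutoffs terms, the transport terms $\partial_t(\varphi_{0j}\chi_j)$ and $p(f'(\bar U)-f'(U_j))\varphi_{0j}\chi_j$, the remaining $\Theta_j[\vec\mu_1,\vec\xi]$ corrections, and the $(1-\eta_j)$ cut of $\mu_j^{-(n+2)/2}D_j$. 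Each term is a pointwise expression one can bound by hand: the bubble interactions between $U_j$ and $U_{j-1}$ produce the weights $\omega_{1j}$ and $\omega_{2j}$ times a gain $\lambda_{0j}^{\varepsilon}\sim t^{-\ell}$ in view of $n\ge 7$ and $\sigma'<\sigma''$; the $\bar E_2$ terms are supported in $|x|\sim\sqrt t$ and fall under $\omega_3$; the $\Theta_j$ are quadratic in the already-small $\mu_{1j}/\mu_{0,j-1}$; and bound \eqref{cotmu}--\eqref{cotpoint} converts the $\varphi_0$ commutators into terms carrying the extra factor $t^{-\sigma}$.

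Next I would estimate $V\psi$ and $VZ^*$. On the support of $\zeta_j$ we use $|u_*-U_j|\lesssim \lambda_j^{(n-2)/2}U(0)$ together with the Lipschitz estimate for $f'$, so $|V|\lesssim U(y)^{p-2}\lambda_j^{(n-2)/2}$ on the inner-annulus region, which combined with $|\psi|\le\|\psi\|_{*,a,\sigma',\beta'}\omega_{1j}^*$ gives an improvement of order $t^{-\ell}\omega_{1j}$; outside the union of the $\zeta_j$-supports, $V=f'(u_*)$ with $u_*$ small, and a direct computation yields decay absorbed into $\omega_3$. The same template applies to $VZ^*$, now using the explicit bound \eqref{estZ*} and $\beta''=p\alpha$; this is the place where we need $\alpha>\tfrac{n-2}{2}$ and $\beta'>2+\alpha$ (from \eqref{fixthem}) to absorb powers of $|x|$. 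The term $B[\vec\phi]$ is supported in the transition annuli $R\mu_j\le|x-\xi_j|\le 2R\mu_j$ (plus a term quadratic in the bubble overlap), where $|y|\sim R$ for $\phi_j$, so \eqref{cotaphi} gives $|\phi_j|\lesssim\|\vec\phi\|_\sigma R^{-a}\mu_j^{\nu_j}$ and then after multiplying by $\mu_j^{-(n-2)/2}$ and by the sizes of $\nabla\eta_j,\Delta\eta_j$ (of order $(R\mu_j)^{-1}$ and $(R\mu_j)^{-2}$), each contribution fits inside $\omega_{1j}+\omega_{2j}$ with a gain $R^{-a}=t^{-a\varepsilon}$.

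Finally, for $\mathcal N(\vec\phi,Z^*+\psi)$ I would use the pointwise Taylor inequality $|f(u_*+\varphi)-f(u_*)-f'(u_*)\varphi|\lesssim |u_*|^{p-2}|\varphi|^2+|\varphi|^p$, split $\varphi=\sum \varphi_j\eta_j+Z^*+\psi$, and check that the worst term in each region lies below the corresponding weight, at the cost of norm-powers as in \eqref{estG}. The main obstacle will be the bookkeeping at the transitions between two consecutive bubbles of very different scales: precisely in the region $\bar\mu_{j+1}\ll|x-\xi_j|\ll\bar\mu_j$ one must verify that the interaction $f'(U_j)U_{j-1}$ (and its counterpart from $\Theta_j$) lands on the correct weight $\omega_{1j}$ with the decay $\lambda_{0j}^{(n-2)/2}t^{-\sigma}$, and that the correction $\varphi_0$ has indeed removed the non-integrable part of this interaction. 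Once this one delicate matching is carried out, combining all the regional estimates and choosing $\ell=\min\{\sigma''-\sigma',\, a\varepsilon,\,\text{gains from }n\ge 7\}$ yields \eqref{estG}.
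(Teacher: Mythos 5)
Your proposal is correct and follows essentially the same architecture as the paper's proof: the paper also proves \eqref{estG} by estimating each of the five constituents ($E^{out}$, $V\psi$, $B[\vec\phi]$, $VZ^*$, $\mathcal N$) in separate lemmas (Section 6), with the same region-by-region matching of weights $\omega_{1j},\omega_{2j},\omega_{11},\omega_3$ against $\omega_{1j}^*,\omega_{2j}^*,\omega_{11}^*,\omega_3^*$, the same use of $R^{-a}=t^{-a\varepsilon}$ and $\lambda_j$-powers as the source of the gain $t_0^{-\ell}$, and the same identification of the delicate cross-scale interaction region $\bar\mu_{j+1}\ll|x-\xi_j|\ll\bar\mu_j$. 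One small difference: for $\mathcal N$ the paper uses the simpler bound $|N_{u_*}[\varphi]|\lesssim|\varphi|^p$ directly (valid since $p\le 2$ for $n\ge 7$), whereas your two-term Taylor inequality $|u_*|^{p-2}|\varphi|^2+|\varphi|^p$ is also a valid upper bound but the $|u_*|^{p-2}|\varphi|^2$ contribution is useless near zeros of $u_*$, so in practice you would fall back on the $|\varphi|^p$ term there anyway.
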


\bigskip
\subsection*{The fixed point problem }
First we define the space $X$ of tuples of functions  $(\vec \phi, \psi, \vec \mu_1, \vec \xi) $  where $\vec \phi$, $\nn_y \phi$ are continuous in $\bar \DD_{2R}$, $\psi $ is continuous in $\R^n\times [0,\infty)$ and $\vec \mu_1$, $\vec \xi$ is of class $C^1[t_0,\infty)$ and such that
$$  \|(\vec \phi, \psi, \vec \mu_1, \vec \xi) \|_X:=    \|\vec \mu_1\|_\sigma + \| \vec \xi \|_{\diamondsuit , \sigma}+ \|\vec \phi\|_\sigma  + \|\psi\|_{*,\sigma', \beta', a}  < +\infty. $$
$X$ is a Banach space endowed with this norm. It is convenient to formulate the fixed point problem \equ{sistema} in the form
\be\label{fp2} (\vec \phi, \psi, \vec \mu_1, \vec \xi)  =
 \vec T(\vec \phi, \psi, \vec \mu_1, \vec \xi)  \inn X
\ee
where $\vec T = (\vec T^1,T^2,\vec T^3, \vec T^4)$, with
$$\begin{aligned}
T^2 ( \vec \phi, \psi, \vec \mu_1, \vec \xi) \ =& \ \TT^{out} [ G( \vec \phi, \psi, \vec \mu_1, \vec \xi) ]\\
 \vec T^{1}_j (\vec \phi, \psi, \vec \mu_1, \vec \xi)\ = &\ \TT^{in}_{\mu_j}  [ H_j ( \psi , \vec \mu_1 , \vec \xi) ] , \\
  \vec T^{3}_j (\vec \phi, \psi, \vec \mu_1, \vec \xi )\ = &\  \mathcal S_j [ M_{j, n+1} ( \TT^{out}[ G( \phi,\psi,\vec\mu    , \vec \xi), \vec \mu_1 , \vec \xi )],\\
    \vec T^{4}_j (\vec \phi, \psi, \vec \mu_1, \vec \xi )\ = &\  \mathcal T [ M_j ( \TT^{out}[ G( \phi,\psi,\vec\mu  , \vec \xi), \vec \mu_1 , \vec \xi)]
  \quad j=1,\ldots, k .
\end{aligned} $$
Using Estimates \equ{apala} and \equ{estG}
we get that
\be \label{estii}\begin{aligned}
\| T^2 ( \vec \phi, \psi, \vec \mu_1 , \vec \xi ) \|_{*,\sigma'', \beta'\, a} \le & \  C \|G( \vec \phi, \psi, \vec \mu_1 , \vec \xi) \|_{\sigma''\, \beta'', a}\qquad \qquad \qquad \qquad\qquad \qquad \\ \le & \
  t_0^{-\ell} \big( 1+ \| \psi \|_{*,a, \sigma' , \beta'} +
\| \psi \|^p_{*,a, \sigma' , \beta'}+  \| \vec \phi \|_\sigma
 + \| \vec \phi \|_\sigma ^p  \big).
\end{aligned} \ee
for some $\ell >0$.
Now, from estimate
\equ{estihj} and the bounds in the definition of the operator
$ \TT^{in}_{\mu_j}$ we see that
$$
     \| T^1_j ( \vec \phi, \psi, \vec \mu_1 , \vec \xi) \|_{j,\sigma'} \ \le\      C\big( 1+ \| \psi \|_{*,a, \sigma' , \beta'})
$$
and hence for some $\ell>0$,
\be
 \| T^1_j ( \vec \phi, \psi, \vec \mu_1 , \vec \xi) \|_{j,\sigma}\ \le\  t_0^{-\ell} \big( 1+ \| \psi \|_{*,a, \sigma' , \beta'}).
\label{t12}\ee
Similarly, estimates \equ{boundS}, \equ{hj1} and  \equ{estii} yield

\be\label{t3}
\begin{aligned}
 \|T^3_j( \vec \phi, \psi, \vec \mu_1 , \vec \xi) \|_{\sigma}  \ \le&\    t_0^{-\ell} \|T^3_j( \vec \phi, \psi, \vec \mu_1) \|_{\sigma'} \\
 \le  &\
 C t_0^{-2\ell} \big( 1+ \| \psi \|_{*,a, \sigma' , \beta'} +
\| \psi \|^p_{*,a, \sigma' , \beta'}+  \| \vec \phi \|_\sigma
 + \| \vec \phi \|_\sigma ^p  \big).
\end{aligned} \ee
Let $$B= \{ ( \vec \phi, \psi, \vec \mu_1 , \vec \xi)\in X \ /\ \| ( \vec \phi, \psi, \vec \mu_1 , \vec \xi) \|_X \le 1\}. $$
From estimates \equ{estii}, \equ{t12} and \equ{t3}  we  find
$$  \|  \vec \phi, \psi, \vec \mu_1 , \vec \xi) \|_X  \le  K t_0^{-\ell} \foral ( \vec \phi, \psi, \vec \mu_1 , \vec \xi) \in B . $$
for some fixed $K$. Hence, enlarging $t_0$ if necessary, we find  that
$$
\vec T ( B) \subset B .
$$
The existence of a fixed point in $B$ will then follow from Schauder's theorem  if we establish the compactness of the operator in the
topology of $X$.

\subsection*{Compactness}

Thus we consider a sequence of parameters
$(\vec\phi_n,\psi_n, \vec\mu_{1n})\in X$  which is bounded.
We have to prove that the sequence $\vec T (\vec\phi_n,\psi_n, \vec\mu_{1n} , \vec \xi_n )$ has  a convergent subsequence in $X$.
Let us consider first the sequence

$$
\bar\phi_{nj} :=  T^1_j  (\vec\phi_n,\psi_n, \vec\mu_{1n} ,  \vec \xi_n ) =    \TT^{in}_{\mu_j}  [ h_n],  \quad h_n:=  \ttt H_j ( \psi_n , \vec \mu_{in} , \vec \xi_n).
$$
From the above estimates, we see that $h_n$ is locally  uniformly bounded in $\bar \DD_R$, and then so is $\bar\phi_{nj}(y,t)$.
Writing $\bar\phi_{nj}(y,t) =  \ttt \phi_{nj}(y,\tau_n(t)) $ where $\tau_n(t) =  \int_{t_0}^t \frac {ds}{\mu_{jn}(s)^2} $, we see that $\ttt \phi_{nj}(y,\tau)$ satisfies
$$
\pp_\tau \ttt \phi_{nj} = \Delta_y \phi_{nj} + \ttt h_n (y,\tau)
$$
where $\ttt h_n (y,\tau) $ is uniformly bounded, and with a uniformly smooth initial condition.
We conclude that for any compact set $K\subset \bar \DD_{2R}$ and points $(x_l,\tau_l)\in \DD_{2R}$
we have that for a fixed $\gamma\in (0,1)$
$$
|\nn_y \ttt\phi_{nj} (y_1,\tau_1) - \nn_y \ttt\phi_{nj} (y_2,\tau_2) | \le C_K [\, |y_1-y_2|^\gamma +  |\tau_1- \tau_2|^{\frac \gamma 2} ]
$$
But if $\tau_l= \tau_n(t^l)$
$$
|\tau_n (t_1) - \tau_n(t_2)|  \le C_K  \max_{s\in [0,T]} {\mu_n(s)^{-2}}    |t_1-t_2|
$$
for a certain fixed $T>0$. This estimates implies the local equicontinuity of the sequences $\bar \phi_{nj}(y,t)$ and $\nn_y \bar \phi_{nj}(y,t)$.
Hence there is a subsequence (that we still denote the same way) such that they converge uniformly on compact subsets of $\DD_{2R}$.
Now, from the assumed bound on $\psi$ and the a priori estimate obtained we have that
$$
|\bar \phi_{nj}(y,t)|  + (1+|y|) |\nn \bar \phi_{nj}(y,t)| \   \le\         Ct^{-\sigma'}\la_j^{\frac{n-2}2}    \frac{ R^n}{1+ |y|^{n+1}}
$$
which implies that for some $\gamma>0$ and some $\sigma <\sigma'' < \sigma'$,
$$
|\bar \phi_{nj}(y,t)|  + (1+|y|) |\nn \bar \phi_{nj}(y,t)| \   \le\         Ct^{-\sigma''}\la_j^{\frac{n-2}2}    \frac{ R^n}{1+ |y|^{n+1+\gamma}}.
$$
This implies that the local uniform limit of $\bar \phi_{nj}(y,t)$ is in fact global in the norm $\| \  \|_{j,\sigma}$. This gives the
compactness.

\medskip
Let us consider now
$$
\bar \psi_n = T^2 (\vec\phi_n,\psi_n, \vec\mu_{1n} , \vec \xi_n ) =\TT^{out} [ g_n],  \quad g_n = G(\phi_n,\psi_n, \vec\mu_{1n}, \vec \xi_n )
$$
Since $g_n$ is uniformly bounded, we have a uniform H\"older bound for $\bar \psi_n(x,t)$ on compact subsets of $\R^n\times [0,\infty)$.
Hence $\bar \psi_n(x,t)$ converges uniformly (up to a subsequence) to a function $\bar\psi$.  The convergence also holds in the norm
$
\| \  \|_{*, \sigma' , a, \beta'}
$
as it follows of the further uniform decay (in space and time) quantitatively measured by the boundedness of the operator $G$  in the norm $\| \  \|_ {\sigma'' , a, \beta''}$ This convergence yields in straightforward way that of $T^3 (\vec\phi_n,\psi_n, \vec\mu_{1n} , \vec \xi_n)$. The proof is
concluded.

\subsection*{Conclusion} From Schauder's Theorem, we have the existence of a solution to the fixed point $(\vec \phi,\psi, \vec \mu_1 , \vec \xi)$
problem \equ{fp2} in $B$. In fact we
see that  $\|(\vec \phi,\psi, \vec \mu_1 , \vec \xi )\|_X \le  \delta$ for any given small $\delta>0$ after having chosen $t_0$ sufficiently large.
Then we have that the function

$$
u[\vec \mu , \vec \xi ] (x,t) := u_*[\vec\mu , \vec \xi ]   + \psi(x,t)  + \sum_{j=1}^k \eta_j \frac 1{\mu_j^{\frac {n-2}2}} \phi_j \left ( \frac {x-\xi_j }{\mu_j},t \right ) + Z^*(x,t)
$$
is a solution to \eqref{F1}, with energy density satisfying \eqref{ed}. Notice that by construction $\vec \mu (t_0 ) = \vec \mu_0 (t_0)$ since $\vec \mu_1 (t_0)=0 $,  $\psi(x,t_0)=0$   while
$$
 \phi_j \left ( \frac {x -\xi_j } \mu  , t_0 \right ) = \ell_j Z_0  \left ( \frac {x-\xi_j (t_0)}{\mu_j^0(0)} \right ),
$$
with $\lim_{t \to \infty} [ \xi_{j+1}  (t) - \xi_j (t) ] = 0$ for $j=1, \ldots , k-1$.
Write $\ell_j =\ell_j^0+\ell_j^1  $ and $\vec \xi = \vec \xi^0 + \vec \xi^1$  where $\ell^0$  and $\vec \xi^0$ are the adequate values for $z_*\equiv 0$ and $u_0(x)$ the corresponding initial condition.  To have the energy density satisfying \eqref{ed}, $\xi^0_1=0$. Then by construction we find that $\ell^1 $ and $\vec \xi^1 (t_0)$ have a size proportional to that of $z_*(x)$. Since   We have then that the solution of equation \equ{F1} with initial condition
$$
u(x,t_0) = u_0(x) + z_*(x)  +  \sum_{j=1}^k \ell_j^1 \omega_j(x) +\sum_{j=2}^k \xi^1_j (t_0) \, \cdot \, \nabla \tilde \omega_j (x)
$$
gives rise to a $k$-bubble tower if we choose the compactly supported functions
$$
\omega_j(x) =  Z_0  \left ( \frac {x-\xi_j (t_0) } {\mu_j^0(t_0)} \right ) \chi\left( \frac {x-\xi_j (t_0 )} {R\mu_j^0(t_0)}\right),
$$
$$
\tilde \omega_j(x) =  U  \left ( \frac {x-\xi_j (t_0) } {\mu_j^0(t_0)} \right ) \chi\left( \frac {x-\xi_j (t_0 )} {R\mu_j^0(t_0)}\right).
$$
The proof is concluded. \qed

\bigskip
\begin{remark} \label{rem}{\em
The set of initial conditions of Equation \equ{F} that lead to a $k$-bubble tower can actually be smoothly parametrized.
In fact we have used Schauder's theorem just for simplicity. With a bit more effort and similar computations we can prove that the operator
$\vec T$ in the fixed problem \equ{fp2} is actually a contraction mapping, and then we have uniqueness of the fixed point and
the scalars $\ell_j^1 = \ell_j^1 [z_*] $, $\vec \xi^1 [z_*]$ defines a Lipschitz function of $z_*$ in its natural topology. Moreover, an application of
implicit function theorem, yields the $C^1$character of these  functionals. It follows that this set of initial conditions defines a codimension $k+ n (k-1)$ manifold inside the natural finite-energy space of perturbations.
}

\end{remark}

\section{Estimates for the operator $G$}

In this section we will prove  Proposition \ref{estG}.  The result will follow from individual estimates for each of the terms of the operator
which we state and prove as separate lemmas below. We recall that
$$G (\vec\phi, \psi;\vec\mu_1 , \vec \xi )  =   V \psi  +  B[\vec\phi] + VZ^* +    \mathcal N ( \vec\phi , Z^*+ \psi; \vec \mu_0+ \vec \mu_1 , \vec \xi)  +   E^{out} . $$
All terms are defined in \eqref{poto}, with $\Psi = \psi + Z^*$.

\medskip
\begin{lemma} \label{lemma1} Assume the parameters $\mu_j $ have the form \eqref{muj0} with $\vec \mu_1$ satisfying $\equ{cotmu}$, and the points $\vec \xi$ satisfy \eqref{cotpoint}. Assume that $a$, $\sigma''$ and $\beta'' $ are defined as in \eqref{fixthem}.  Then there exists $\ell >0$ so that for all sufficiently large $t_0$ we have
\be \label{esterrore}
\| E^{out}  \|_{ a , \sigma'' , \beta'' } \, \le\, \,t_0^{-\ell},
\ee
where $E^{out} $ is defined in \eqref{poto}.
\end{lemma}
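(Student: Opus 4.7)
The plan is to decompose $E^{out}$ according to the computation of $S[u_*]=S(\bar U+\varphi_0)$ in \eqref{e3}, and to estimate each summand separately against the weights $\omega_{11},\omega_{1j},\omega_{2j},\omega_3$ making up the norm $\|\cdot\|_{a,\sigma'',\beta''}$. Using the identity $-\bar\chi\,\pp_t U_1=\bar\chi\mu_1^{-(n+2)/2}D_1$ displayed after \eqref{D0}, together with the decomposition $E_j[\varphi_{0j};\vec\mu,\vec\xi]=\mu_j^{-(n+2)/2}D_j+\Theta_j$ coming from \eqref{e4}--\eqref{e55}, the first line of \eqref{e3} minus $\sum_{j=1}^k\mu_j^{-(n+2)/2}D_j\,\eta_j$ reduces to the ``support-difference'' terms $\mu_1^{-(n+2)/2}D_1(\bar\chi-\eta_1)$ and $\mu_j^{-(n+2)/2}D_j(\chi_j-\eta_j)$ for $j\ge2$, plus the residual $\sum_{j\ge 2}\Theta_j\chi_j$. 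The remainder of $E^{out}$ consists of the interaction/cut-off errors $\bar E_{11}+\bar E_2$ and the $\varphi_0$-driven correction terms in the third through last lines of \eqref{e3}.

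I would first handle the support-difference pieces. On $\mathrm{supp}(\chi_j-\eta_j)$ one has $|y_j|=|x-\xi_j|/\mu_j\ge R$, so the ingredients of $D_j$ obey $Z_{n+1}(y_j)=O(|y_j|^{-(n-2)})$, $U(y_j)^{p-1}=O(|y_j|^{-4})$, and $\nabla U(y_j)=O(|y_j|^{-(n-1)})$. Combined with the a priori bounds \eqref{cotmu}--\eqref{cotpoint}, which give each ingredient of $D_j$ a prefactor of size $\lambda_{0j}^{(n-2)/2}t^{-\sigma}$, this yields pointwise control by $R^{-(2-a)}\omega_{1j}$ in the inner annulus $R\mu_j\lesssim|x-\xi_j|\lesssim\tfrac12\bar\mu_j$ (the slowest-decaying factor is $U^{p-1}$) and by $\omega_{2j}$ in the transition region $|x-\xi_j|\sim\bar\mu_j$ with room to spare (the ratio is a positive power of $\lambda_j$). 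Since $R=t^{\varepsilon}$, this produces the gain $t_0^{-\varepsilon(2-a)+(\sigma''-\sigma)}$, provided the parameters are arranged so that $\sigma''-\sigma<\varepsilon(2-a)$. The residual $\Theta_j\chi_j$ carries an extra factor either of the form $\mu_{1j}\dot\mu_{1j}/\mu_{0j}\dot\mu_{0j}$ or $(\mu_{1j}/\mu_{0,j-1}-\mu_{1,j-1}/\mu_{0,j-1})^2$, giving an additional $t^{-\sigma}$ factor compared with $D_j$, which is enough under $\sigma''<2\sigma$; the linear piece $pU^{p-1}U(0)\lambda_{0j}^{(n-2)/2}\mu_{1,j-1}/\mu_{0,j-1}$ is the delicate term and must be measured carefully against $\omega_{1j}$ using the additional room in the gap $\sigma''-\sigma$ and the spatial decay exponent $a$.

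For $\bar E_{11}$, I would work region by region using the cut-offs $\chi_j$: on $\{\chi_j=1\}$ the dominant piece $f'(U_j)(U_{j-1}-U_{j-1}(0))$ is of order $|y_j\mu_j/\mu_{j-1}|^2\,U(y_j)^{p-1}\mu_j^{-(n+2)/2}$ and is absorbed into $\omega_{1j}+\omega_{2j}$, while the interactions involving remote bubbles $U_l$ with $l\ne j-1,j$ are much smaller because at scale $\mu_j$ these bubbles are either highly concentrated far inside or nearly constant at their diffuse scale; the remainders supported in $\{1-\sum\chi_l>0\}$ or $\{1-\chi_j>0\}\cap\mathrm{supp}(\pp_t U_j)$ are handled similarly. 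The term $\bar E_2$ is supported on $|x|\sim\sqrt{t}$, where every $U_j$ is polynomially small to power $n-2$ in $\mu_j/\sqrt t$; a direct bound places $\bar E_2$ safely inside $\omega_3=(\sqrt t+|x|)^{-\beta''}$ since $\beta''=p\alpha<n$. The cross terms $(f'(\bar U)-f'(U_j))\varphi_{0j}\chi_j$, $2\nabla\varphi_{0j}\cdot\nabla\chi_j+\Delta\chi_j\,\varphi_{0j}$, and $\pp_t(\varphi_{0j}\chi_j)$ are all localized where $\varphi_{0j}=O(\lambda_{0j}^{(n-2)/2}\mu_j^{-(n-2)/2}(1+|y_j|)^{-2})$ is small; a direct estimate bounds each by $t^{-\sigma}(\omega_{1j}+\omega_{2j})$. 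Finally, $N_{\bar U}[\varphi_0]$ is $O(|\varphi_0|^p)$ where $|\varphi_0|\gtrsim|\bar U|$ and $O(|\varphi_0|^2|\bar U|^{p-2})$ otherwise, both strictly higher order.

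The main obstacle is book-keeping: each summand of $E^{out}$ lives across several regions --- inner bubble cores, transition annuli between consecutive bubbles, and the self-similar far field --- and in every region one must identify the correct weight among $\omega_{11},\omega_{1j},\omega_{2j},\omega_3$ and verify that the prefactor is $O(t_0^{-\ell})$. The smallness always comes from one of three sources: the factor $R^{-\gamma}=t^{-\gamma\varepsilon}$ coming from the inner/outer split (where $\gamma>0$ depends on the decay of $Z_{n+1}$, $U^{p-1}$, or $\nabla U$), the quadratic/nonlinear structure of $\Theta_j$ and $N_{\bar U}[\varphi_0]$, or the exponentially fast decay of bubbles at scale $\sqrt t$. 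The most delicate step is the bound for the middle term of $\Theta_j\chi_j$, which must be squeezed into $\omega_{1j}$ by balancing the gap $\sigma''-\sigma$ against the spatial decay exponent $2-a$.
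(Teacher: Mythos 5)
Your overall strategy — decompose $E^{out}$ according to \eqref{e62} (equivalently, by subtracting $\sum_j\mu_j^{-(n+2)/2}D_j\eta_j$ from the rearranged form of $S[u_*]$ in \eqref{e3}) and then estimate each piece against the weights $\omega_{11},\omega_{1j},\omega_{2j},\omega_3$ region by region — is exactly the paper's route, and most of the individual estimates you sketch ($\bar E_{11}$ term by term via $\chi_j$, $\bar E_2$ on the self-similar scale, the $\varphi_{0j}$ cut-off commutators, $N_{\bar U}[\varphi_0]\lesssim|\varphi_0|^p$, and the quadratic/linear split of $\Theta_j$) match what the paper does.

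There is, however, a genuine gap in your treatment of the support-difference pieces. You assert that on $\mathrm{supp}(\chi_j-\eta_j)$ one has $|y_j|=|x-\xi_j|/\mu_j\ge R$, and you use the resulting decay of $Z_{n+1}$, $U^{p-1}$, $\nabla U$ at large $|y_j|$ to produce the gain $R^{-(2-a)}$. This is correct only for $j=1$ and $j=k$. For $2\le j\le k-1$ the cut-off $\chi_j$ in \eqref{defchij} contains the \emph{inner} truncation $\chi(|x-\xi_j|/(2\bar\mu_{j+1}))$, so $\chi_j\equiv 0$ on $\{|x-\xi_j|\le 2\bar\mu_{j+1}\}$, whereas $\eta_j\equiv 1$ there (since $\bar\mu_{j+1}=\mu_j\sqrt{\lambda_{j+1}}\ll R\mu_j$). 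Hence $\chi_j-\eta_j\equiv -1$ on the whole inner ball $|x-\xi_j|\lesssim\bar\mu_{j+1}$, i.e.\ on a set where $|y_j|\lesssim\sqrt{\lambda_{j+1}}\ll 1$, \emph{not} $|y_j|\ge R$. On that ball $Z_{n+1}(y_j)$, $U^{p-1}(y_j)$, $\nabla U(y_j)$ are all $O(1)$, so there is no gain of a negative power of $R$; the contribution is roughly $\mu_j^{-(n+2)/2}\lambda_{0j}^{(n-2)/2}t^{-\sigma}$ on all of $|y_j|\lesssim\sqrt{\lambda_{j+1}}$, which is not controlled by $t_0^{-\ell}\omega_{1j}$ (their ratio there is $\approx t^{\sigma''-\sigma}\ge 1$). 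The correct comparison in that region must be made with the weight at the next scale, $\omega_{1,j+1}$ (note that $|y_{j+1}|\sim\lambda_{j+1}^{-1/2}$ there, so $(1+|y_{j+1}|)^{-(2+a)}\sim\lambda_{j+1}^{(2+a)/2}$), and the smallness comes from a positive power of $\lambda_{j+1}$, i.e.\ from scale separation, not from $R$. This is precisely the second term $\lambda_{j+1}^{1-a/2}\,C\,\mu_{j+1}^{-(n+2)/2}\lambda_{j+1}^{(n-2)/2}(1+|(x-\xi_j)/\mu_{j+1}|)^{-(2+a)}\chi(|x-\xi_j|/\bar\mu_{j+1})$ in the paper's bound of $\mu_j^{-(n+2)/2}D_j(\chi_j-\eta_j)$, which appears only for $j=2,\dots,k-1$ and is absent for $j=k$, confirming that it is exactly the inner region you missed. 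As written, your argument omits this term and thus does not close the estimate for $2\le j\le k-1$.
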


\begin{proof} The error function $E^{out}$ defined in \eqref{poto}
has the explicit expression
\begin{equation}\label{e62}
\begin{aligned}
E^{out} &= \bar E_{11} + \bar E_2 +   \sum_{j=2}^k  {(-1)^{j-1} \over \mu_j^{n+ 2 \over 2}} \theta_j (\vec \mu_1 , \vec \xi ) \chi_j + {\bar \chi \over \mu_1^{n+2 \over 2} } \theta_1 (\vec\mu_1 , \vec \xi ) \\ &
+ \sum_{j=2}^k p (f'(\bar U)- f'(U_j) ) \varphi_{0j} \chi_j     + \sum_{j=2}^k \left[ 2 \nabla_x \varphi_{0j} \nabla_x (\chi_j  ) + \Delta_x (\chi_j  ) \varphi_{0j} \right] \\
  &
  - \sum_{j=2}^k \pp_t (\varphi_{0j} \chi_j  )  +  N_{\bar U} [\varphi_0 ] + \mu_{1}^{-{n+2 \over 2}} D_1 [\vec \mu_1 , \vec \xi]  [\bar \chi - \eta_1]
  \\
  & + \sum_{j=2}^k (-i)^{j+1} \mu_j^{-{n+2 \over 2}} D_j [\vec \mu_1 , \vec \xi] [\chi_j - \eta_j].
\end{aligned}
\end{equation}
In the above formula, $\bar E_{11}$ is defined in \eqref{e11n}, $\bar E_2$ in \eqref{e1}, $D_1 [\vec \mu_1 , \vec \xi]$ in \eqref{D0}, for $j=2, \ldots , k$, $\theta_j $ and $D_j [\vec \mu_1 , \vec \xi]$ in \eqref{e55},
$\chi_j$ in \eqref{defchij}  and $\varphi_{0j} $ in \eqref{defvarphi0}-\eqref{defphi0j}. Also the functions $\eta_j$, $j=1, \ldots , k$ are the cut-off functions introduced in \eqref{newcuts}.

We will estimate in details most of the terms in \eqref{e62}. To estimate these terms in the norm $\| \cdot \|_{ a , \sigma'' , \beta'' }$, we will make use od the definition of the weights $\omega_{11}$, $\omega_{1j}$, $\omega_{2j}$, $j=2, \ldots , k$, and $\omega_3$ introduced respectively in
\eqref{omega1}, \eqref{omega1j}, \eqref{omega2j} and \eqref{omega3}, with $\sigma = \sigma''$ and $\beta = \beta''$.

\medskip
\subsubsection*{Estimate of $\bar E_{11}$  in \eqref{e11n}}
 We start with the term $$\sum_{j=2}^k f' (U_j) \left( \sum_{i \not= j, j-1} U_i \right) \chi_j.$$
  Let us fix $j$. If $i \leq j-2$, we have
$$
\begin{aligned}
\left| f' (U_j ) U_i \chi_j \right| &\leq C \la_{j-1}^{n-2 \over 2} \,  {\la_j^{n-2 \over 2} \over \mu_j^{n-2 \over 2}}  {\mu_j^{-2} \over 1+ |{x -\xi_j  \over \mu_j} |^4} \leq t_0^{-\ell}  \omega_{1j} (x,t),
\end{aligned}
$$
for some $\ell >0$.
If $i >j$,
$$
\begin{aligned}
\big| & f' (U_j ) U_i \chi_j \big| \ \leq  \  C {1\over \mu_j^2} {1\over 1+ |{x-\xi_j \over \mu_j} |^4} U_i (x) \chi_j (x) \\
\leq &\
C {1\over \mu_j^2} {1\over 1+ |{x-\xi_j \over \mu_j} |^4} {\mu_{j+1}^{n-2 \over 2} \over (\mu_{j+1}^2 + |x-\xi_{j+1}|^2)^{n-2 \over 2} }  \chi_j (x) \left( {\bf 1}_{\{ |x-\xi_j | < \mu_j^{1+a} \} } + {\bf 1}_{\{ |x-\xi_j | > \mu_j^{1+a} \} } \right),
\end{aligned}
$$
for some $a>0$, where ${\bf 1}$ is the function defined in \eqref{funo}.
Now, we observe that
$$
\begin{aligned}
{1\over \mu_j^2} {1\over 1+ |{x -\xi_j \over \mu_j} |^4} & {\mu_{j+1}^{n-2 \over 2} \over (\mu_{j+1}^2 + |x-\xi_{j+1} |^2)^{n-2 \over 2} }  \chi_j (x)  {\bf 1}_{\{ |x-\xi_j | < \mu_j^{1+a} \} }
\leq   \ C   {\la_{j+1}^{{n+2 \over 4}} \over (\bar \mu_{j+1})^{n-2 \over 2}} {
(\bar \mu_{j+1} )^{-2} \over \, (1+ |{x-\xi_j \over \bar \mu_{j+1}}| )^{n-2} } \chi_j (x) \\
\leq  & \ C   {\la_{j+1}^{{n-2 \over 4}} \mu_j^{2a}  \over (\bar \mu_{j+1})^{n-2 \over 2}} {
(\bar \mu_{j+1} )^{-2} \over \, (1+ |{x-\xi_j \over \bar \mu_{j+1}}| )^{n} } \chi_j (x) \leq t_0^{-\ell} \omega_{2,j+1}
\end{aligned}
$$
and also
$$
\begin{aligned}
{1\over \mu_j^2} {1\over 1+ |{x-\xi_j  \over \mu_j} |^4} & {\mu_{j+1}^{n-2 \over 2} \over (\mu_{j+1}^2 + |x-\xi_{j+1} |^2)^{n-2 \over 2} }  \chi_j (x)  {\bf 1}_{\{ |x-\xi_j | > \mu_j^{1+a} \} }
\leq   \ C  {1\over \mu_j^2} {1\over 1+ |{x-\xi_j  \over \mu_j} |^4} {\mu_{j+1}^{n-2 \over 2} \over \mu_j^{(n-2) (1+a)} } \chi_j (x) \\
\leq  & \ C   \left( {\la_{j+1} \over \la_j} \right)^{n-2 \over 2} \mu_j^{- (n-2) a} \omega_{1j} \leq t_0^{-\ell}  \omega_{1j} ,
\end{aligned}
$$
for some $\ell >0$, provided $a$ is chosen small enough.
Thus  $$\Big | f' (U_j) \left( \sum_{i \geq j} U_i \right) \chi_j \Big | \,
\, \le\, t_0^{-\ell} \left( \omega_{1j} +  \omega_{2,j+1} \right)  $$ for some $\ell >0$. Another term in $\bar E_{11}$ is $$ \sum_{j=2}^k f'(U_j ) (U_{j-1} - U_{j-1}(0) ) \chi_j, $$ which can be bounded as follows
$$
\left| f'(U_j ) (U_{j-1} - U_{j-1}(0) ) \chi_j \right| \, \le\, C\,\la_j {\la_j^{{n-2 \over 2} } \over \mu_j^{n-2 \over 2}} {\mu_j^{-2} \over (1+ |{x -\xi_j  \over \mu_j} | )^{2+a} } \chi_j \leq t_0^{-\ell} \omega_{1j}
$$
 for some  $\ell >0$.
Let us now consider the term $$\sum_{j=2}^k \big[ N_{U_j} (\sum_{l\not= j, j-1} U_l ) -\sum_{l\not= j} f( U_l ) \big] \chi_j $$
 in $\bar E_{11}$.
By construction, $$\Big|  N_{U_j} \Big(\sum_{l\not= j, j-1} U_l\Big ) \chi_j \Big | \, \le\, C\,\left( |U_{j-1} |^p + |U_{j+1} |^p \right) \chi_j. $$ We have
$$
|U_{j-1} |^p \chi_j \, \le\, C\,\la_j (t_0 )^{1-{a \over 2} }  \, {\la_j^{{n-2 \over 2} } \over \mu_j^{n+2 \over 2}} {1 \over (1+ |{x -\xi_j  \over \mu_j} | )^{2+a} }  \chi_j \leq t_0^{-\ell}  \, {\la_j^{{n-2 \over 2} } \over \mu_j^{n+2 \over 2}} {1 \over (1+ |{x -\xi_j \over \mu_j} | )^{2+a} }  \chi_j
$$
and
$$
|U_{j+1} |^p \chi_j \, \le\, C\,  \,  {\la_{j+1}^{{n+2 \over 4} } \over (\bar \mu_{j+1})^{n+2 \over 2}} {
1 \over \, (1+ |{x-\xi_j \over \bar \mu_{j+1}}| )^{n} } \chi_j \leq t_0^{-\ell} \omega_{2, j+1} .
$$
To bound $|f( U_l )| \chi_j$, for $l\not= j$, we argue in a very similar way.
We thus conclude that
$$
\Big \| \sum_{j=2}^k \big[ N_{U_j} (\sum_{l\not= j, j-1} U_l ) -\sum_{l\not= j} f( U_l ) \big] \chi_j \Big\|_{a,\sigma'',\beta''} \ \leq\ t_0^{-\ell}.
$$
The next term  to estimate in $\bar E_{11}$ is $$\bar \chi \sum_{j=2}^k (1-\chi_j ) \pp_t U_j. $$
Let us fix $j$, and observe that
$$
\left| \bar \chi  (1-\chi_j ) \pp_t U_j \right| \, \le\, C\,{\la_j^{n-2 \over 2} \over \mu_j^{n+2 \over 2}} U ({x-\xi_j \over \mu_j} ) \bar \chi (1- \chi_j ).
$$
If $j\not= 2$, then
$$
{\la_j^{n-2 \over 2} \over \mu_j^{n+2 \over 2}} U ( {x-\xi_j \over \mu_j} ) \bar \chi (1- \chi_j ) \, \le\, t_0^{-\ell} {\la_j^{{n-2 \over 2} } t^{-\sigma''} \over \mu_j^{n-2 \over 2}} {\mu_j^{-2} \over (1+ |{x-\xi_j  \over \mu_j} | )^{2+a} } \bar \chi_j.
$$
If $j=2$, then
$$
{\la_2^{n-2 \over 2} \over \mu_2^{n+2 \over 2}} U ( {x-\xi_2 \over \mu_2} ) \bar \chi (1- \chi_2  ) \, \le\, C\,\la_2^{n-6 \over 6} \,
{\la_{2}^{{n+2 \over 4} } \over (\bar \mu_2)^{n-2 \over 2}} {
(\bar \mu_2 )^{-2} \over \, (1+ |{x-\xi_2 \over \bar \mu_2}| )^n }.
$$
A similar bound is also valid for the last terms in $\bar E_{11}$.
We conclude that
$$
\| \bar E_{11} \|_{a, \sigma'' , \beta'' } \, \le\, \,t_0^{-\ell}
$$
for some positive $\ell$.

\medskip
\subsubsection*{ Estimate of $E_2$ in \eqref{e1}} \ \
We start with $\left( \bar \chi^p - \bar \chi \right) f (\sum_{j=1}^k U_j )$: we have
$$
\left| \left( \bar \chi^p - \bar \chi \right) f (\sum_{j=1}^k U_j ) \right| \, \le\, C\,{|\bar \chi^p - \bar \chi| \over (1+ |x|)^{n+2}}
\, \le\, C\,{ 1\over (t^{1\over 2} + |x| )^{n+2}}.
$$
The second term in $E_2$ can be estimated as follows
$$
 \left| ( \Delta_x - \pp_t  ) (\bar \chi ) (\sum_{j=1}^k U_j ) \right| \, \le\, {C \over t } { \chi ({|x| \over \sqrt{t}}) \over (1+ |x|)^{n-2}} \ \le\
 {C\over (t^{1\over 2} + |x| )^{n-1}}.
 $$
 The last term in $E_2$ can be treated in a similar way, thus we conclude that
 $$
| \bar E_2 (x,t) | \leq C t_0^{-\ell} \omega_3 (x,t)
 $$
for some $\ell >0$.

\medskip
\subsubsection*{Estimates of the remaining terms}
The next two terms are directly bounded
$$\left| \sum_{j=2}^k  {(-1)^{j-1} \over \mu_j^{n+ 2 \over 2}} \theta_j (\vec\mu_1 ,\vec \xi ) \chi_j \right| \, \le\, C\,{\mu_{1 , j-1} \over \mu_{0, j-1}} \,
 \sum_{j=2}^k {\la_j^{{n-2 \over 2} } \over \mu_j^{n-2 \over 2}} {\mu_j^{-2} \over (1+ |{x-\xi_j  \over \mu_j} | )^{2+a} } \bar \chi_j,
 $$
 and
 $$
 \left| {\bar \chi \over \mu_1^{n+2 \over 2} } \theta_1 (\vec\mu_1,\vec \xi)  \right| \, \le\, C\,( \mu_{11} + {\mu_{21} \over \mu_{20}} ) \,
 \sum_{j=1}^{k-1} {\la_{j+1}^{{n+2 \over 4} } \over (\bar \mu_{j+1})^{n-2 \over 2}} {
(\bar \mu_{j+1} )^{-2} \over \, (1+ |{x-\xi_j \over \bar \mu_{j+1}}| )^{n-2} }
 $$
 From \eqref{muj0}, we conclude that
 $$
 \left \| \sum_{j=2}^k  {(-1)^{j-1} \over \mu_j^{n+ 2 \over 2}} \theta_j (\vec\mu_1   ) \chi_j + {\bar \chi \over \mu_1^{n+2 \over 2} } \theta_1 (\vec\mu_1  )   \right \|_{a,\sigma'', \beta''}\ \, \le\, \,\ t_0^{-\ell}
 $$
 for some $\ell >0$.

\medskip
 Next we estimate
$\sum_{j=2}^k \left[ 2 \nabla_x \varphi_{0j} \nabla_x (\chi_j  ) + \Delta_x (\chi_j  ) \varphi_{0j} \right] $. From \eqref{defphi0j}, we easily get that
$$
| \varphi_{0j} \chi_j | \, \le\, C\,\la_j U_j \chi_j .
$$
Since
$$
| \nabla_x \chi_j | \, \le\, C\,{1\over \bar \mu_j} {\bf 1}_{\{\bar \mu_j < |x-\xi_j |<2   \bar \mu_j \}} + C{1\over \bar \mu_{j+1}} {\bf 1}_{\{\bar \mu_{j+1} <|x-\xi_j | <2 \bar \mu_{j+1} \}}
$$
where ${\bf 1}$ is defined in \eqref{funo}.
We have
$$
\begin{aligned}
\left| 2 \nabla_x \varphi_{0j} \nabla_x (\chi_j  )  \right|&\, \le\, C\,{\la_j \over \mu_j^{n \over 2}} {1\over 1+ |{x-\xi_j \over \mu_j} |^{n-1} } \left(  {1\over \bar \mu_j} {\bf 1}_{\{\bar \mu_j < |x-\xi_j |<2   \bar \mu_j \}} + {1\over \bar \mu_{j+1}} {\bf 1}_{\{\bar \mu_{j+1} <|x-\xi_j| <2 \bar \mu_{j+1} \}} \right) \\
&\, \le\, t_0^{-\ell} \, \left( \omega_{1j}    + \omega_{2j} \right) .
\end{aligned}
$$
A very similar estimate is also valid for the term $ \Delta_x (\chi_j  ) \varphi_{0j}$. We find
$$
\left \| \sum_{j=2}^k \left( 2 \nabla_x \varphi_{0j} \nabla_x (\chi_j  ) + \Delta_x (\chi_j  ) \varphi_{0j} \right) \right \|_{a,\sigma'', \beta''}\
\le\  Ct_0^{-\ell } .
$$

\medskip
Now let us consider the term $ N_{\bar U} [\varphi_0 ]$. We have
$$
\begin{aligned}
|N_{\bar U} [\varphi_0 ] | & \leq C |\varphi_0|^p \leq \sum_{j=2}^k |\varphi_{0j}|^p \chi_j \\
&\leq C \sum_{j=2}^k {1\over \mu_j^{n+2 \over 2}} |\phi_{0j} |^p \chi_j \leq C {1\over \mu_j^{n+2 \over 2}} {\la_j^{n+2 \over 2}  \over (1+ |{x-\xi_j \over \mu_j}|)^{2+a} } \chi_j.
\end{aligned}
$$

\medskip
The remaining terms in $E^{out}$ can be treated as follows:
$$
\left| \mu_{1}^{-{n+2 \over 2}} D_1 [\vec \mu_1 ]  [\bar \chi - \eta_1]  \right| \leq C |\dot \mu_{11}| {1\over (\sqrt{t} + |x| )^{n-2}},
$$
 for each $j=2, \ldots , k-1$,
$$
\begin{aligned}
&\left| \mu_j^{-{n+2 \over 2}} D_j [\vec \mu_1] [\chi_j - \eta_j] \right| \leq {C \over R^{2-a} \mu_j^{n+2 \over 2}} {\la_j^{n-2 \over 2} \over (1+ |{x-\xi_j \over \mu_j}| )^{2+a} } \chi (|{x-\xi_j \over \bar \mu_j} |) \\
&+ \la_{j+1}^{1-{a\over 2}} {C \over  \mu_{j+1}^{n+2 \over 2}} {\la_{j+1}^{n-2 \over 2} \over (1+ |{x-\xi_j \over \mu_{j+1}}| )^{2+a} } \chi (|{x-\xi_j \over \bar \mu_{j+1}} |)
\end{aligned}
$$
and for $j=k$
$$
\begin{aligned}
&\left| \mu_k^{-{n+2 \over 2}} D_k [\vec \mu_1] [\chi_k - \eta_k] \right| \leq {C \over R^{2-a} \mu_k^{n+2 \over 2}} {\la_k^{n-2 \over 2} \over (1+ |{x-\xi_k \over \mu_k}| )^{2+a} } \chi (|{x-\xi_k \over \bar \mu_k} |) .
\end{aligned}
$$

\medskip
Collecting all these estimates, we obtain the validity of \eqref{esterrore}.
\end{proof}

\medskip
\begin{lemma} \label{lemma2} Assume the parameters $\mu_j $ have the form \eqref{muj0} with $\vec \mu_1$ satisfying $\equ{cotmu}$, and the points $\vec \xi$ satisfy \eqref{cotpoint}. Assume that $a$, $\sigma''$, $\sigma'$, $\beta'$ and $\beta'' $ are defined as in \eqref{fixthem}.  Then there exists $\ell >0$ so that for all sufficiently large $t_0$ we have
\be \label{estVpsi}
\| V \, \psi   \|_{ a , \sigma'' , \beta'' } \, \le\, t_0^{-\ell} \, \| \psi \|_{*,a, \sigma' , \beta' }
\ee
where $V $ is defined in \eqref{poto}.
\end{lemma}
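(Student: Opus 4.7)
The plan is to establish \eqref{estVpsi} by a case-by-case analysis based on a spatial decomposition of $\R^n\times(t_0,\infty)$ determined by the cut-offs $\zeta_j$ and $\bar\chi$. Using the pointwise bound coming from the definition of the norm,
\[
|\psi(x,t)|\le \|\psi\|_{*,a,\sigma',\beta'}\bigg(\omega_{11}^*+\sum_{j=2}^k(\omega_{1j}^*+\omega_{2j}^*)+\omega_3^*\bigg),
\]
I will derive pointwise bounds on $|V|$ in each region, multiply by the corresponding $\omega^*$-weight, and compare the result with the target weights $\omega_{11},\omega_{1j},\omega_{2j},\omega_3$ carrying the faster-decay exponents $(\sigma'',\beta'')$. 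The gain of the factor $t_0^{-\ell}$ will come in each region from one of: the strict gap $\beta'>2+\alpha$ vs.\ $\beta''=p\alpha$, the strict gap $\sigma'<\sigma''$, a positive power of some $\la_j$ (a polynomial decay in $t$), or a power of $R^{-1}=t^{-\varepsilon}$.

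I split $\R^n\times(t_0,\infty)$ into: (a) $\{|x|\ge 2\sqrt t\,\}$, where $\bar\chi=0$ and hence $u_*=0$, $V\equiv 0$; (b) the far outer region $\{|x|\le 2\sqrt t\}\setminus\bigcup_j\{|x-\xi_j|\le 2R\mu_j\}$, where every $\zeta_j\equiv 0$, so $V=f'(u_*)$ with $u_*$ small; (c) the bubble annuli $\{2R^{-1}\mu_j\le |x-\xi_j|\le R\mu_j\}$, where $\zeta_j=1$ and $V=f'(u_*)-f'(U_j)$; (d) the transition strips where $\zeta_j$ varies between $0$ and $1$; (e) the cores $\{|x-\xi_j|\le R^{-1}\mu_j\}$, where $\zeta_j=0$ while $u_*\approx U_j$. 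In (b) one has $|V|\le C|u_*|^{p-1}\lesssim(\sqrt t+|x|)^{-(n+2)}$, and combined with $|\psi|\lesssim \omega_3^*$ the product is absorbed by $\omega_3$ in the target norm using the strict gap $\beta''=p\alpha<p\beta'-2p+\ldots$ carefully tuned with \eqref{fixthem}.

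In region (c), I exploit the H\"older continuity of $f'$, available since $p-1=4/(n-2)\in(0,1)$ for $n\ge 7$:
\[
|V|=|f'(u_*)-f'(U_j)|\le C|u_*-U_j|^{p-1}.
\]
The perturbation $u_*-U_j$ is dominated in this annulus by the adjacent bubbles $U_{j-1},U_{j+1}$ and the correction $\varphi_0$, and each of these is smaller than $U_j$ by a positive power of $\la_j$ or $\la_{j+1}$ (which is a strictly positive power of $t^{-1}$). Raising to the $(p-1)$-power preserves this smallness, and combining with $|\psi|\le\|\psi\|_{*,a,\sigma',\beta'}(\omega_{1j}^*+\omega_{2j}^*)$ yields the bound by $\omega_{1j}$ (and, near the inner boundary of $\zeta_j=1$, also by $\omega_{2,j+1}$) with the required $t_0^{-\ell}$ gain coming from the positive $\la_j$-power. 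The transition strips (d) are treated by the same H\"older argument, picking up an additional $R^{-a}=t^{-a\varepsilon}$ from the localization of $\zeta_j$ to a shell of relative width $R^{-1}$.

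The technical core of the proof is region (e): there $V=f'(u_*)\approx f'(U_j)$ is of order $\mu_j^{-2}$, which does not provide time decay on its own. The smallness must come entirely from weight accounting, namely, from the ratio $\omega_{1j}/\omega_{1j}^*\sim \mu_j^{-2}(1+|y|)^{-2}$ that naturally appears when comparing source and target weights at this scale, together with the compatibility conditions \eqref{fixthem} that force $\sigma''-\sigma'$ strictly smaller than the positive exponent produced by the product $\la_j^{(n-2)/2}R^{-a}$. I will subdivide the core into $\{|y|\le 1\}$ and $\{1<|y|\le R^{-1}\}$: on the first subregion the comparison uses $\sigma''-\sigma'<(n-2)\min_j(\alpha_j-\alpha_{j-1})/2$ together with a positive power of $R^{-1}$, and on the second the surplus $(1+|y|)^{-a}$ decay of $\omega_{1j}^*$ absorbs the $\mu_j^{-2}$ loss. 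Closing this core estimate by a careful matching of the exponents $a,\sigma,\sigma',\sigma'',\beta',\beta'',\varepsilon$ is the main obstacle and the step where the specific choices in \eqref{fixthem} are used.
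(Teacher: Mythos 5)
Your decomposition of $\R^n\times(t_0,\infty)$ into regions (a)--(e) mirrors the paper's region-by-region strategy. Regions (a), (b) are fine, and your H\"older argument for (c), (d) --- using $|f'(u_*)-f'(U_j)|\le C|u_*-U_j|^{p-1}$ with $p-1=\tfrac{4}{n-2}\in(0,1)$ for $n\ge 7$, and $|u_*-U_j|\lesssim U_{j-1}+U_{j+1}+|\varphi_0|$ giving a positive power of $\la_j$ or $\la_{j+1}$ --- is correct and in fact more explicit than the paper's compressed opening pointwise bound on $|V|$ (which effectively zeroes out the $\zeta_j$-annuli). The problem is region (e), the core $\{|x-\xi_j|<R^{-1}\mu_j\}$, which you rightly call ``the main obstacle'' but do not actually close, and the mechanism you sketch for closing it is wrong.

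Compare $V\,\omega_{1j}^*\big|_{\sigma'}$ to the target $\omega_{1j}\big|_{\sigma''}$ in the core. Writing $y_j=(x-\xi_j)/\mu_j$ and suppressing constants and cutoffs, $V\approx \mu_j^{-2}(1+|y_j|)^{-4}$, while
$$\omega_{1j}^*\big|_{\sigma'}=t^{-\sigma'}\mu_j^{-\frac{n-2}{2}}\la_j^{\frac{n-2}{2}}(1+|y_j|)^{-a},\qquad
\omega_{1j}\big|_{\sigma''}=t^{-\sigma''}\mu_j^{-\frac{n+2}{2}}\la_j^{\frac{n-2}{2}}(1+|y_j|)^{-(2+a)}.$$
The $\la_j^{(n-2)/2}$ factors cancel identically, the $\mu_j^{-2}$ in $V$ exactly cancels the $\mu_j^{2}$ in the ratio $\omega_{1j}^*/\omega_{1j}$, and one gets $V\omega_{1j}^*/\omega_{1j}=t^{\sigma''-\sigma'}(1+|y_j|)^{-2}$. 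In the core $|y_j|<R^{-1}$ the factor $(1+|y_j|)^{-2}\approx 1$, and since \eqref{fixthem} sets $\sigma''>\sigma'$ this ratio \emph{grows} in $t$. No factor of $\la_j^{(n-2)/2}$ or $R^{-a}$ survives, so the gain you attribute to ``the positive exponent produced by the product $\la_j^{(n-2)/2}R^{-a}$'' and the condition ``$\sigma''-\sigma'<(n-2)\min_j(\alpha_j-\alpha_{j-1})/2$'' do not come from this comparison (and \eqref{fixthem} imposes no such condition). Your fallback --- that ``the surplus $(1+|y|)^{-a}$ decay of $\omega_{1j}^*$ absorbs the $\mu_j^{-2}$ loss'' --- is also off: $\omega_{1j}^*$ has \emph{less} spatial decay than $\omega_{1j}$, not more, and the $\mu_j^{-2}$ cancels anyway. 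Finally one cannot delegate the bound to a larger term of the target sum: at a generic core point, say $|y_j|\sim R^{-1}$, each of $\omega_{2j},\omega_{1,j+1},\omega_{2,j+1},\omega_{11},\omega_3$ with $\sigma''$ is dominated by $\omega_{1j}\big|_{\sigma''}$. So the core estimate is a genuine gap; you correctly located it but the proposed exit does not work.
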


\begin{proof}
Using the convention that  $\bar \mu_{k+1} =0$ and $\bar \mu_1= \sqrt{t}$, we have
$$
\begin{aligned}
|V \psi | &\leq C (1-\sum_{j=1}^k \zeta_j ) \left( \sum_{i=1}^k U_i {\bf 1}_{ \{\bar \mu_{i+1} < |x-\xi_i| < \bar \mu_i \} } \right)^{p-1}
\sum_{j=2}^k\big (\omega_{1j}^* + \omega_{2j}^* + \omega_{11}^*  + \omega_3^* \big) \, \| \psi \|_{*, a, \sigma' , \beta' } \\
& \leq C   \, \| \psi \|_{*, a', \sigma' , \beta'} \, (1-\sum_{j=1}^k \zeta_j ) \,  \sum_{i=1}^k   U_i^{p-1}  {\bf 1}_{ \{\bar \mu_{i+1} < |x-\xi_i| < \bar \mu_i \} }
\sum_{j=2}^k\big (\omega_{1j}^* + \omega_{2j}^* + \omega_{11}^*  + \omega_3^* \big)
\end{aligned}
$$
where $\omega_{1j}^* $, $ \omega_{2j}^* $,  $\omega_{11}^*  $, and $ \omega_3^*$ are defined respectively in \eqref{omega1j}, \eqref{omega2j},
\eqref{omega1} and \eqref{omega3}. Also the norm $\| \cdot \|_{*,a' ,\sigma' , \beta'}$ is defined in \eqref{norm1*}.

Our purpose is now to show that, for all $i \in \{ 1, \ldots , k \}$,
\be \label{ww0}
\|  (1-\sum_{j=1}^k \zeta_j ) \,    U_i^{p-1}  {\bf 1}_{ \{\bar \mu_{i+1} < |x-\xi_i | < \bar \mu_i \} }
\sum_{j=2}^k\big (\omega_{1j}^* + \omega_{2j}^* + \omega_{11}^*  + \omega_3^* \big) \|_{ a , \sigma'' , \beta'' } \, \le\, \,t_0^{-\ell}
\ee
Indeed, estimate \eqref{estVpsi}  follows directly  from \eqref{ww0}.

Take $i =k$, then
\be \label{ww1}
\begin{aligned}
(1-\sum_{j=1}^k \zeta_j ) & U_k^{p-1}  {\bf 1}_{ \{ |x-\xi_k | < \bar \mu_k \} }
\sum_{j=2}^k\big (\omega_{1j}^* + \omega_{2j}^* + \omega_{11}^*  + \omega_3^* \big)  \\
& \leq C (1-\sum_{j=1}^k \zeta_j ) U_k^{p-1}  {\bf 1}_{ \{ |x-\xi_k | < \bar \mu_k \} }
\big (\omega_{1,k}^* + \omega_{2k}^*  \big) \\
&\leq C {1\over R^{2-a}} {t^{-\sigma''} \over \mu_k^{n+2 \over 2} } {\la_k^{n-2 \over 2} \over (1+ |{x-\xi_k \over \mu_k} |)^{2+a} } \chi ({|x-\xi_k | \over \bar \mu_k}) \leq   {C\over R^{2-a}} \omega_{1,k} .
\end{aligned}
\ee
Take $i \in \{ 2, \ldots , k-1 \}$, then
\be \label{ww2}
\begin{aligned}
& (1-\sum_{j=1}^k \zeta_j )  U_i^{p-1}  {\bf 1}_{ \{ \bar \mu_{i+1} <|x-\xi_i | < \bar \mu_i \} }
\sum_{j=2}^k\big (\omega_{1j}^* + \omega_{2j}^* + \omega_{11}^*  + \omega_3^* \big)  \\
& \leq C (1-\sum_{j=1}^k \zeta_j ) U_i^{p-1}  {\bf 1}_{ \{ \bar \mu_{i+1} <|x -\xi_i | < \bar \mu_i \} }
\big (\omega_{1,i+1}^* + \omega_{2, i+1}^*   \big ) \\
&+  C (1-\sum_{j=1}^k \zeta_j ) U_i^{p-1}  {\bf 1}_{ \{ \bar \mu_{i+1} <|x-\xi_i  | < \bar \mu_i \} }
\big (\omega_{1,i}^* + \omega_{2, i}^*   \big ) \\
& + C (1-\sum_{j=1}^k \zeta_j ) U_i^{p-1}  {\bf 1}_{ \{ \bar \mu_{i+1} <|x-\xi_i  | < \bar \mu_i \} } \big ( \omega_{1,i-1}^* + \omega_{2, i-1}^* + \omega_{11}^* + \omega_3^* \big) .
\end{aligned}
\ee
We next estimate the different terms in the above expression. Let us consider first the terms involving $(\omega_{1,i+1}^* + \omega_{2, i+1}^* )$. We have
$$
\begin{aligned}
(1-\sum_{j=1}^k \zeta_j ) & U_i^{p-1}  {\bf 1}_{ \{ \bar \mu_{i+1} <|x -\xi_i | < \bar \mu_i \} } \omega_{1,i+1}^* \leq {C \over R^4}   {\bf 1}_{ \{ \bar \mu_{i+1} <|x-\xi_i | < \bar \mu_i \} } {t^{-\sigma''} \over \mu_{i+1}^{n-2 \over 2}}  \, { {\la_{i+1}^{{n-2 \over 2} } \over (1+ |{x -\xi_i \over \mu_{i+1}} | )^{a } }}\,  \chi
\left( { |x-\xi_i | \over \bar \mu_{i+1}} \right) \\
&\leq {C \over R^4} \la_{i+1} \omega_{1, i+1}
\end{aligned}
$$
and
$$
\begin{aligned}
(1-\sum_{j=1}^k \zeta_j ) & U_i^{p-1}  {\bf 1}_{ \{ \bar \mu_{i+1} <|x-\xi_i | < \bar \mu_i \} } \omega_{2,i+1}^* \leq
(1-\sum_{j=1}^k \zeta_j ) & U_i^{p-1}  {\bf 1}_{ \{ \bar \mu_{i+1} <|x-\xi_i | <  \mu_i \} } \omega_{2,i+1}^*  \\
& + (1-\sum_{j=1}^k \zeta_j )  U_i^{p-1}  {\bf 1}_{ \{ \mu_{i} <|x-\xi_i | < \bar \mu_i \} } \omega_{2,i+1}^*  \\
& \leq {C \over R^4} \omega_{2, i+1} + {t^{-b} \over R^{2-a}} \omega_{1,i},
\end{aligned}
$$
for some $b>0$.

Let us now consider  the terms involving $(\omega_{1,i}^* + \omega_{2, i}^* )$. A direct computation gives
$$
(1-\sum_{j=1}^k \zeta_j )  U_i^{p-1}  {\bf 1}_{ \{ \bar \mu_{i+1} <|x-\xi_i | < \bar \mu_i \} } \omega_{1,i}^* \leq
{C\over R^2} \omega_{1,i}
$$
and
$$
(1-\sum_{j=1}^k \zeta_j )  U_i^{p-1}  {\bf 1}_{ \{ \bar \mu_{i+1} <|x-\xi_i | < \bar \mu_i \} } \omega_{1,i}^* \leq
{C\over R^2} \omega_{1,i}.
$$
Arguing similarly, we get
$$
(1-\sum_{j=1}^k \zeta_j ) U_i^{p-1}  {\bf 1}_{ \{ \bar \mu_{i+1} <|x-\xi_i | < \bar \mu_i \} } \big ( \omega_{1,i-1}^* + \omega_{2, i-1}^* + \omega_{11}^* + \omega_3^* \big) \leq {C \over R^{2-a}} \omega_{1,i}.
$$
 Thus we conclude that, for $i \in \{ 2, \ldots , k-1 \}$ it holds
$$
 (1-\sum_{j=1}^k \zeta_j )  U_i^{p-1}  {\bf 1}_{ \{ \bar \mu_{i+1} <|x -\xi_i | < \bar \mu_i \} }
\sum_{j=2}^k\big (\omega_{1j}^* + \omega_{2j}^* + \omega_{11}^*  + \omega_3^* \big) \leq {1\over R^{2-a}} \left( \omega_{1, i} + \omega_{1, i+1} + \omega_{2, i+1} \right).
$$
Take now $i=1$,
then
\be \label{ww3}
\begin{aligned}
(1-\sum_{j=1}^k \zeta_j ) & U_1^{p-1}  {\bf 1}_{ \{ \bar \mu_2 <|x -\xi_1 | < \sqrt{t}  \} }
\sum_{j=2}^k\big (\omega_{1j}^* + \omega_{2j}^* + \omega_{11}^*  + \omega_3^* \big)  \\
& \leq C (1-\sum_{j=1}^k \zeta_j ) U_1^{p-1}  {\bf 1}_{ \{ \bar \mu_2 <|x -\xi_1 | < \sqrt{t}  \} }
\big (\omega_{1,2}^* + \omega_{22}^*  + \omega_{11}^* + \omega_3^*\big)  .
\end{aligned}
\ee
We have
$$
\begin{aligned}
(1-\sum_{j=1}^k \zeta_j ) &U_1^{p-1}  {\bf 1}_{ \{ \bar \mu_2 <|x -\xi_1 | < \sqrt{t}  \} }
\big (\omega_{1,2}^* + \omega_{22}^* \big ) \\
&\leq t^{-\sigma'' + \sigma'} \bar \mu_2^2 \omega_{1,2} + t^{-\sigma''+ \sigma' }  {\mu_2^2 \over \bar \mu_2^2} \bar \mu_2^{\sigma'' -\sigma' } \omega_{22} \\
&\leq  t_0^{-\ell}  (\omega_{1,2} + \omega_{2,2} )
\end{aligned}
$$
for some $\ell >0$, thanks to the facts that $\sigma'' > \sigma'$ and that $\sigma''$ and $\sigma'$ are small positive numbers. Also,
$$
\begin{aligned}
(1-\sum_{j=1}^k \zeta_j ) & U_1^{p-1}  {\bf 1}_{ \{ \bar \mu_2 <|x-\xi_1 | < \sqrt{t}  \} }
\omega_{11}^*\leq {t^{-\sigma' + \sigma} \over (1+ |x-\xi_1|)^{2-\sigma'' + \sigma'}} \omega_{1,1} \leq  t_0^{-\ell} \omega_{11},
\end{aligned}
$$
and
$$
\begin{aligned}
(1-\sum_{j=1}^k \zeta_j ) & U_1^{p-1}  {\bf 1}_{ \{ \bar \mu_2 <|x-\xi_1 | < \sqrt{t}  \} }
\omega_{3}^*\leq {C \over 1+ |x|^4} {1 \over (\sqrt{t} + |x| )^{\beta' -2}  }\\
&+ C {t^{- {\beta' -2 \over 2}  } \over 1+ |x|^4} \chi ({|x| \over \sqrt t} )
\leq t_0^{-\ell} \left( \omega_{11} + \omega_3 \right)
\end{aligned}
$$
thanks to the fact that $\sigma''$ can be chosen so that  ${\beta' -2 \over 2} -1 -\sigma'' > 0$ since $\beta' >2+ \alpha$.

The validity of \eqref{estVpsi} thus follows from \eqref{ww0}, \eqref{ww1}, \eqref{ww2} and \eqref{ww3}.
\end{proof}

Let us now estimate the linear operator $B[\vec\phi]$. We recall that, using summation convention,
$$ \begin{aligned}
B[\vec \phi]  \ =&\      \frac{2} { \mu_j^{\frac{n-2}2}} \nn_x\eta_j\cdot \nn_x \phi_j  +  \frac{1} { \mu_j^{\frac{n-2}2} }(-\eta_{jt} +  \Delta_x \eta_j )  \phi_j \\   + &  \frac{\mu_j \dot \mu_j} { \mu_j^{\frac{n+2}2}} [ \phi_j  + y \cdot \nn_y \phi_j ]\,\eta_j  + \frac{\mu_j \dot \xi_j} { \mu_j^{\frac{n+2}2}} \cdot \nn_y \phi_j \,\eta_j   \\
+& \eta_j (f'(u_*) - f'(U_j))
\frac{\phi_j} { \mu_j^{\frac{n-2}2} }
\\ = &\ B_1 + B_2+ B_3 + B_4
\end{aligned} $$

\begin{lemma}
 Assume the parameters $\mu_j $ have the form \eqref{muj0} with $\vec \mu_1$ satisfying $\equ{cotmu}$, and the points $\vec \xi$ satisfy \eqref{cotpoint}. Assume that $a$, $\sigma''$ and $\beta'' $ are defined as in \eqref{fixthem}.  Then there exists $\ell >0$ so that for all sufficiently large $t_0$ we have
$$
\| B[\vec\phi]\, \|_ {a ,\sigma'', \beta''} \ \le \  t_0^{-\ell} \|\vec \phi\|_{ \sigma} .
$$

\end{lemma}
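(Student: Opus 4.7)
Following the scheme of Lemmas~\ref{lemma1}--\ref{lemma2}, we decompose $B[\vec\phi]=B_1+B_2+B_3+B_4$ as in the statement and estimate each piece separately in $\|\cdot\|_{a,\sigma'',\beta''}$ by a factor $t_0^{-\ell}\|\vec\phi\|_\sigma$. The pointwise tools are: the inner bound
\[
|\phi_j(y,t)|+(1+|y|)|\nabla_y\phi_j(y,t)| \le C\|\phi_j\|_{j,\sigma}\frac{R^{n}\mu_j^{\nu_j}}{1+|y|^{n+1}} \text{ on } \mathcal{D}_{2R},
\]
which at $|y|\sim R$ yields $|\phi_j|\le C\|\vec\phi\|_\sigma\mu_j^{\nu_j}/R$ and $|\nabla_x\phi_j|\le C\|\vec\phi\|_\sigma\mu_j^{\nu_j}/(\mu_j R^2)$; the identification $\mu_j^{\nu_j}\sim \la_j^{(n-2)/2}t^{-\sigma}$ for $j\ge 2$ and $\mu_1^{\nu_1}\sim t^{-1-\sigma}$; the velocity bounds $|\mu_j\dot\mu_j|,\,|\mu_j\dot\xi_j|\le C\la_j^{(n-2)/2}t^{-\sigma}$ implied by \eqref{cotmu}-\eqref{cotpoint}; and the slow-growing choice $R=t^\varepsilon$ together with the strict inequalities $\sigma<\sigma'<\sigma''<1$, $0<a<1$ from \eqref{fixthem}.

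For $B_1$ and $B_2$, the factors $\nabla_x\eta_j$, $\Delta_x\eta_j$, $\eta_{jt}$ are supported in the thin annulus $R\mu_j\le|x-\xi_j|\le 2R\mu_j$ (so $|y|\sim R$), with $|\nabla_x\eta_j|\le C(R\mu_j)^{-1}$ and $|\Delta_x\eta_j|+|\eta_{jt}|\le C(R\mu_j)^{-2}$ (the time derivative contributing $|\dot R/R|+|\dot\mu_j|/\mu_j+|\dot\xi_j|/(R\mu_j)\lesssim 1/t$). Combining, $|B_1|+|B_2|\le C\|\vec\phi\|_\sigma\mu_j^{\nu_j-(n+2)/2}R^{-3}$. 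The natural weight at $|y|\sim R$ is $\omega_{1j}\sim t^{-\sigma''}\la_j^{(n-2)/2}/(\mu_j^{(n+2)/2}R^{2+a})$ (or $\omega_{11}$ when $j=1$); the ratio $(|B_1|+|B_2|)/\omega_{1j}$ becomes $R^{a-1}t^{\sigma''-\sigma}=t^{-\varepsilon(1-a)+\sigma''-\sigma}$, which is $\le t_0^{-\ell}$ provided $\varepsilon(1-a)>\sigma''-\sigma$, a condition compatible with the freedom in \eqref{fixthem}.

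For $B_3$, the smallness comes directly from the parameter velocities. Using the bound $|\phi_j|+|y\cdot\nabla_y\phi_j|\le C\|\vec\phi\|_\sigma\mu_j^{\nu_j}/(1+|y|)$ on $\{|y|\le 2R\}$ together with (iii), one finds $|B_3|\le C\|\vec\phi\|_\sigma t^{-\sigma}\la_j^{(n-2)/2}\mu_j^{-(n+2)/2}\eta_j/(1+|y|)$, bounded by $Ct^{\sigma''-\sigma}\|\vec\phi\|_\sigma\omega_{1j}\le t_0^{-\ell}\|\vec\phi\|_\sigma\omega_{1j}$. For $B_4$, on $\{|y|\le 2R\}$ one uses $|f'(u_*)-f'(U_j)|\le C|U_j|^{p-2}|u_*-U_j|+C|u_*-U_j|^{p-1}$: the contributions from coarser bubbles ($i<j$) behave like the constant $U_i(\xi_j)\sim \mu_i^{-(n-2)/2}$ and, when combined with $|U_j|^{p-2}\cdot|\phi_j|/\mu_j^{(n-2)/2}$, produce an extra factor $(\mu_j/\mu_i)^{(n-2)/2}=\la_j^{(n-2)/2}\cdots\la_{i+1}^{(n-2)/2}$; contributions from finer bubbles ($i>j$) localize near $\xi_i$ and carry $\la_{j+1}^{(n-2)/2}$; the contribution of $\varphi_0$ carries $\la_j$. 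Matching each against $\omega_{1j}$ or $\omega_{2,j+1}$ produces the required $t_0^{-\ell}$.

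The main obstacle is the fine balance in $B_1$ and $B_2$: the crucial $R^{a-1}$ gain comes from the fast decay $R^n/(1+|y|^{n+1})\sim R^{-1}$ of the inner norm at $|y|\sim R$, and must beat the small $t^{\sigma''-\sigma}$ loss. This is secured precisely by the slow growth $R=t^\varepsilon$ and the strict ordering $\sigma<\sigma''$ in \eqref{fixthem}. The remainder is a careful bookkeeping exercise, matching the powers of $\mu_j$, $\la_j$, $t$ and $R$ across bubble scales and against the weights $\omega_{1j}$, $\omega_{2j}$, $\omega_{11}$, $\omega_3$.
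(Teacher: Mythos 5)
Your proposal follows the same route as the paper: the decomposition $B=B_1+B_2+B_3+B_4$, the pointwise inner bound $|\phi_j|+(1+|y|)|\nabla_y\phi_j|\lesssim \|\phi_j\|_{j,\sigma}\,\la_j^{(n-2)/2}t^{-\sigma}R^n/(1+|y|^{n+1})$, the observation that the cut-off derivatives localize near $|y|\sim R$ where this decay yields a gain $R^{-1}$, and the balance $R=t^\ve$, $\sigma<\sigma''$ from \eqref{fixthem} to convert those gains into a factor $t_0^{-\ell}$; your explicit condition $\ve(1-a)>\sigma''-\sigma$ and the treatment of $B_4$ by splitting into coarser and finer bubbles are exactly the paper's argument.

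One bookkeeping slip to correct: in your $B_3$ estimate you assert $|\phi_j|+|y\cdot\nabla_y\phi_j|\le C\|\vec\phi\|_\sigma\,\mu_j^{\nu_j}/(1+|y|)$ on all of $\{|y|\le 2R\}$. This does not follow from the inner norm, which gives $R^{n}\mu_j^{\nu_j}/(1+|y|^{n+1})$; near $y=0$ your bound is off by a factor $\sim R^n$, and with the $1/(1+|y|)$ decay the ratio $|B_3|/\omega_{1j}$ would contain the unbounded factor $(1+|y|)^{1+a}$, so the comparison as written does not close. The fix is the same device the paper (and you, for $B_1,B_2$) employ elsewhere: keep the full $R^n/(1+|y|^{n+1})$, note $(1+|y|)^{2+a}/(1+|y|^{n+1})\le 1$ since $a<1<n-1$, and absorb the residual $R^n$ through the smallness $\la_j^{(n-2)/2}R^n\le t^{-2\sigma}$ enforced by taking $\ve$ small, which leaves a net factor $t^{\sigma''-3\sigma}\le t_0^{-\ell}$ once $\sigma''<3\sigma$ (compatible with \eqref{fixthem}). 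With this correction the argument is the paper's proof.
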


\proof

We recall that by definition of $\| \phi_j\|_{j, \sigma} $ we have
$$
(1+ |y|) |\nn_y\phi(y,t)|  +  |\phi(y,t)| \ \le\  \la_j^{\frac{n-2}2} t^{-\sigma}  \frac{  R^{n}} {1+|y|^{n+1}} \| \phi_j\|_{j, \sigma}
$$
Hence, by choosing $\ve$ sufficiently small in $R= t^\ve$ we can assume $\la_j^{\frac{n-2}2} R^{n} \le t^{-2\sigma} $,
$$
|B_3(x,t)| \ \le \ \frac{\la_j^{\frac{n-2}2}t^{-3\sigma} }{ \mu_j^{\frac{n+2}2}} \frac{\|\phi_j\|_{j,\sigma } }{1+|\mu_j^{-1} (x-\xi_j) |^{n+1}}
{\bf 1} _{\{ |x-\xi_j | < 2R\mu_j \}}
$$
As a conclusion, the following bound holds: for a $\ell>0$ that can be chosen arbitrarily small, we have
$$
\| B_3\|_ { a,2\sigma, \beta\\} \ \le \  t_0^{-\ell} \sum_{j=1}^k\|\phi_j\|_{j, \sigma} .
$$
Similarly we estimate the term $B_2$:
$$\begin{aligned}
|B_2(x,t)| \ \le &  \
\frac{\la_j^{\frac{n-2}2}t^{-\sigma} }{ \mu_j^{\frac{n+2}2}} \frac{\|\phi_j\|_{j,\sigma  } }{R^{3} }
 {\bf 1}_{\{ |x-\xi_j | < 2R\mu_j \}}   \\  \le\ & \ \frac 1{R^{1-a}}
\frac{\la_j^{\frac{n-2}2}t^{-\sigma} }{ \mu_j^{\frac{n+2}2}}
\frac  {\|\phi_j\|_{j,\sigma, a  }}{ 1 + |\mu_j^{-1} (x-\xi_j)|^{2+ a}} .
\end{aligned} $$
The same estimate holds for $B_1$. We find that for some $\sigma''>\sigma$,
$$
\| B_1\|_ {a ,\sigma'', \beta''} + \| B_3\|_ { a ,\sigma'', \beta''} \ \le \  \delta \sum_{j=1}^k\|\phi_j\|_{j, \sigma} .
$$
Finally, we estimate $B_4$. We have that
$$
\begin{aligned}
|\eta_j (f'(u_*) - f'(U_j))|{\bf 1 }_{\{ |x| < \bar \mu_{j+1}\} } \ \le&\ f'\big( \sum_{i=j+1}^k U_i {\bf 1 }_{\{\bar \mu_{i+1}<  |x-\xi_i | < \bar \mu_{i}\} } \big )\\
\le&\    C \sum_{i=j+1}^k  U_i^{p-1} {\bf 1 }_{\{\bar \mu_{i+1}<  |x-\xi_i | < \bar \mu_{i}\} }\\\le  &C \sum_{i=j+1}^k \frac 1{\mu_i^2}
 \frac 1{ 1 + |\mu_i^{-1} (x-\xi_i) |^4}\,  {\bf 1 }_{\{ |x-\xi_i | < \bar \mu_{i}\} }.
\end{aligned} $$
Hence
$$
\begin{aligned}
&|\eta_j (f'(u_*) - f'(U_j))| \frac {|\phi|} {\mu_j^{\frac{n-2}2} }\,{\bf 1 }_{\{ |x-\xi_i | < \bar \mu_{j+1}\} } \\
\ \le &\ C \sum_{i=j+1}^k   \frac 1{\mu_i^{\frac{n+2}2}}
 \frac 1{ 1 + |\mu_i^{-1} (x-\xi_i ) |^4}\,  {\bf 1 }_{\{ |x-\xi_i | < \bar \mu_{i}\} } \left(\frac{ \mu_i} {\mu_j}\right)^{\frac{n-2}2}
 \la_j^{\frac{n-2}2} t^{-\sigma}  R^{n} \,\|\vec \phi\|_{\sigma}
\end{aligned} $$
Using that
$\left(\frac{ \mu_i} {\mu_j}\right)^{\frac{n-2}2} \le \la_i^{\frac{n-2}2} $ and that with a convenient choice of $\ve$ we have that
$\la_j^{\frac{n-2}2}   R^{n+1-a} \le t^{-2\sigma}$, we find
\be\label{nana}
\begin{aligned}
&\ |\eta_j (f'(u_*) - f'(U_j))| \frac {|\phi|} {\mu_j^{\frac{n-2}2} }\,{\bf 1 }_{\{ |x-\xi_j | < \bar \mu_{j+1}\} } \\
\ \le &\ C \sum_{i=j+1}^k   \frac 1{\mu_i^{\frac{n+2}2}}
 \frac 1{ 1 + |\mu_i^{-1} (x-\xi_i)|^4}\,  {\bf 1 }_{\{ |x-\xi_i | < \bar \mu_{i}\} } \la_i^{\frac{n-2}2}
 t^{-3\sigma}  \,\|\vec \phi\|_{\sigma}.
\end{aligned} \ee

On the other hand, we can estimate
$$\begin{aligned}
|\eta_j (f'(u_*) - f'(U_j))|{\bf 1 }_{\{|x-\xi_j | >\bar \mu_{j+1}\}} \le & \ CU_j^{p-1}  (\frac  {U_{j+1}}{U_j}   +  \frac  {U_{j-1}}{U_j} )
\end{aligned} $$
We have that in the region $\bar \mu_{j+1}< |x-\xi_j | < \bar \mu_{j} $ ( equivalently   $\la_{j+1}^{\frac 12} <|y|< \la_{j}^{-\frac 12} $
for $y= \frac {x-\xi_j}{\mu_j}$).
$$\begin{aligned}
\frac  {U_{j+1}}{U_j}\ \le&\    \la_{j+1}^{\frac{n-2}2}
 \frac {1+|y|^{n-2}} { \la_{j+1}^{n-2}  + |y|^{n-2}} \\ \le\  &\
 \frac{ \la_{j+1}^{\frac{n-2}2}} { \la_{j+1}^{\frac{n-2}2}  + |y|^{n-2}}   +
\la_{j+1}^{\frac{n-2}2}.\\
\frac  {U_{j-1}}{U_j}\ \le &\  \la_j^{\frac{n-2}2} \frac{1+  | y|^{n-2}}{1+  |\la_j y|^{n-2}}\\
 \le& \
 \la_j^{\frac{n-2}2} ({1+  | y|^{n-2}})
\end{aligned} $$
Hence
$$\begin{aligned}
\eta_j U_j^{p-1}  \frac  {U_{j-1}}{U_j}    \le & \
\frac C{\mu_j^2} \frac  {\la_j^{\frac{n-2}2}} {1+|y|^{2+a}}  (1+  |y|^{n-4+a} )   \\
\le & \   \la_j^{1 -\frac a2}  \frac C{\mu_j^2}\frac 1  {1+|y|^{2+a}} {\bf 1 }_{\{|x-\xi_j| <\bar \mu_{j}\}},\\
\eta_j U_j^{p-1}  \frac  {U_{j+1}}{U_j}\le & \    \la_{j+1}^{\frac {n-2}2}
\frac C{\mu_j^2}\frac 1  {1+|y|^{4}} {\bf 1 }_{\{|x-\xi_j | <\bar \mu_{j}\}} \\ & +\
      \frac C{\bar\mu_{j+1}^2} \frac 1{1+|y|^4}  \frac{ \la_{j+1}^{\frac{n}2}} { \la_{j+1}^{\frac{n-2}2}  + |y|^{n-2}}\\
     \le &  \quad  \frac C{\bar\mu_{j+1}^2}  \frac{ 1} { 1  + |\bar\mu_{j+1}^{-1}(x-\xi_j) |^{n}}
\end{aligned} $$
Then, assuming with no loss of generality that $\ve$ in $R= t^\ve$ is chosen so that  $$\la_{j+1}^{\frac {n-2}4} R^{n}\le t^{-2\sigma}, \quad \la_j^{1 -\frac a2} R^{n}\le t^{-2\sigma}, $$ we find
\be\label{nana1}
\begin{aligned}
\ |\eta_j (f'(u_*) - f'(U_j))| \frac {|\phi|} {\mu_j^{\frac{n-2}2} }\,{\bf 1 }_{\{ |x-\xi_j | > \bar \mu_{j+1}\} }
\ \le  \
\frac C{\bar\mu_{j+1}^{\frac{n+2}2}}  \frac{ 1} { 1  + |{\bar\mu}_{j+1}^{-1} (x-\xi_j) |^{n}} \la_{j+1}^{\frac {n-2}4} t^{-3\sigma} \|\phi\|_{a,\sigma}\\  +\
 \frac C{\mu_j^2}\frac 1  {1+|\mu_j^{-1}(x-\xi_j) |^{2+a}} {\bf 1 }_{\{|x-\xi_j | <\bar \mu_{j}\}} \la_{j}^{\frac {n-2}2} t^{-3\sigma}\|\phi_j\|_{j, a,\sigma}.
\end{aligned} \ee
Combining estimates \equ{nana} and \equ{nana1} we find
$$
\| B_4\|_ {a , 2\sigma, \beta''} \ \le \  \delta \sum_{j=1}^k\|\phi_j\|_{j,a, \sigma} .
$$
At last we get for the full operator and for number  $\sigma'' >\sigma$,
$$
\| B[\vec \phi]\|_{a, \sigma'',\beta''}  \ \le\  \delta \sum_{j=1}^k\|\phi_j\|_{j,\sigma} .
$$
\qed

\bigskip
Finally, let us consider the remaining terms nonlinear term $VZ_* +  \mathcal N(\vec\phi, \psi, \vec \mu_1)$.

\begin{lemma}
We have the validity of the estimate
$$
\| VZ_* +  \mathcal N(\vec\phi, \psi, \vec \mu_1)\|_{a,\sigma'',\beta''}   \ \le \ \delta\, [ 1 +  \|\vec \phi\|^p_\sigma +
\|\psi \|_{*,a,\sigma',\beta'}^p]
$$
\end{lemma}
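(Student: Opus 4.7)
The plan is to bound the two summands $VZ^*$ and $\mathcal N(\vec\phi,\psi,\vec\mu_1)$ separately and check that each one is dominated, in the norm $\|\cdot\|_{a,\sigma'',\beta''}$, by the right hand side with a small constant. For $VZ^*$ I would mimic the proof of Lemma \ref{lemma2}, using now the pointwise decay \eqref{estZ*} of $Z^*$ in place of the $\|\psi\|_{*,a,\sigma',\beta'}$--controlled bound on $\psi$. Because $\alpha >\frac{n-2}2$, the function $|Z^*(x,t)|\le C(\sqrt t+|x|)^{-\alpha}$ is controlled by a small multiple of $\omega_3^*$ with the exponent $\beta'-2\le \alpha$, so exactly the same case analysis over the annular regions $\{\bar\mu_{j+1}<|x-\xi_j|<\bar\mu_j\}$ applies; in each region a factor $U_j^{p-1}$ from $V$ is paired with $|Z^*|$ and estimated against $\omega_{1,j}+\omega_{1,j+1}+\omega_{2,j+1}+\omega_{11}+\omega_3$, picking up an extra $R^{-(2-a)}$ or $t_0^{-\ell}$ thanks to the gap between $\sigma''$ and $\sigma'$ and between $\beta''$ and $\beta'$.

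For the nonlinear term I would use that $n\ge 7$ implies $p=\frac{n+2}{n-2}\le 2$, so the basic inequality
\[
|N_{u_*}[v]|\ =\ |f(u_*+v)-f(u_*)-f'(u_*)v|\ \le\ C\,|v|^{p}
\]
holds. With $v=\sum_{j=1}^k\eta_j\vp_j+\psi+Z^*$ and $|a+b+c|^p\le C(|a|^p+|b|^p+|c|^p)$ the task splits into estimating $|Z^*|^p$, $|\psi|^p$, and $|\vp_j\eta_j|^p$. The first one, by \eqref{estZ*}, is pointwise bounded by $C(\sqrt t+|x|)^{-p\alpha}=C\omega_3$ with $\beta''=p\alpha$, which is in the target norm with constant $O(1)$ but the explicit smallness comes from the $t_0^{-\ell}$ factor that can be pulled out since the bound is time-uniform and $\beta''$ is strictly bigger than the threshold needed. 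The term $|\psi|^p$ is controlled by
\[
|\psi|^p\le \|\psi\|_{*,a,\sigma',\beta'}^{\,p}\,\bigl(\omega_{11}^*+\textstyle\sum_j(\omega_{1j}^*+\omega_{2j}^*)+\omega_3^*\bigr)^{p},
\]
and a region-by-region expansion of this $p$--th power, using $p\le 2$ and the explicit sizes of the $\omega^*$'s in each region $\{|x-\xi_j|\sim \mu_j\}$, $\{\bar\mu_{j+1}<|x-\xi_j|<\mu_j\}$, $\{\mu_j<|x-\xi_j|<\bar\mu_j\}$, and $\{|x|\ge\sqrt t\}$, yields a bound by a small multiple of $\sum_j(\omega_{1j}+\omega_{2j})+\omega_{11}+\omega_3$; the smallness comes from a net power of $\mu_j$, $\la_j$ or $t$ in each region. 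The term $|\vp_j\eta_j|^p$ is handled similarly using the pointwise bound induced by $\|\vp_j\|_{j,\sigma}$, namely
\[
|\vp_j\eta_j|\ \le\ \|\vec\vp\|_\sigma\,\la_j^{\frac{n-2}2}\,t^{-\sigma}\,\mu_j^{-\frac{n-2}2}\,\frac{R^n}{1+|y_j|^{n+1}}\,{\bf 1}_{\{|y_j|<2R\}},
\]
where $y_j=(x-\xi_j)/\mu_j$; raising this to the power $p$ and using $\la_j^{\frac{n-2}{2}} R^n\ll t^{-2\sigma}$ from the choice \eqref{epsilon} of $R$, one gets a bound by $t_0^{-\ell}\omega_{1j}$ inside $|y_j|\le R$ and, after exiting the support of $\eta_j$, a bound which is zero.

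Finally, I would collect all pieces and absorb all $R^{-(2-a)}$, $\la_j^{\text{positive}}$ and $t_0^{-\ell}$ factors into the universal small constant $\delta$, which holds after taking $t_0$ large (equivalently $R$ large). The main obstacle I foresee is the combinatorial bookkeeping in the region--by--region analysis of $|\psi|^p$: one must check that the naive bound $(\omega_{1j}^*)^p$ sits under $\omega_{1j}$ with a small prefactor, and that cross terms between different $\omega^*$'s, which appear when expanding the sum to the power $p$, are dominated by the outer bubble's weight in the transition region $|x-\xi_j|\sim\bar\mu_j$. This is essentially the same matching that was carried out in the proof of Lemma \ref{lemma2}, so once that verification is transferred the rest follows.
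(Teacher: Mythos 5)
Your proposal takes essentially the same route as the paper: both start from the pointwise inequality $|N_{u_*}[v]|\lesssim|v|^p$ (valid since $n\ge 7$ gives $p\le 2$), split $v=\sum_j\eta_j\varphi_j+\psi+Z^*$, reduce to bounding $\mu_j^{-\frac{n+2}{2}}|\phi_j|^p\eta_j$, $|\psi|^p$, $|Z^*|^p$ against the weights $\omega_{1j},\omega_{2j},\omega_{11},\omega_3$, and treat $VZ^*$ by the same case analysis as $V\psi$ in Lemma~\ref{lemma2}. For the $|\psi|^p$ term the paper is more compact than your region-by-region plan: it applies $p$-th powers directly to the weights $\omega^*$ and uses the exponent relations from \eqref{fixthem} (in particular $(\omega_{1j}^*)^p\le \la_j\,\omega_{1j}$, and ${\omega_{2j}^*}^p+{\omega_{11}^*}^p+{\omega_3^*}^p\le Ct^{-\gamma}(\omega_{2j}+\omega_{11}+\omega_3^{1+\gamma})$), but these are the same checks you would perform region by region. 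One substantive misstep to flag: you claim the smallness of the $|Z^*|^p$ contribution is gained from a $t_0^{-\ell}$ factor because ``$\beta''$ is strictly bigger than the threshold needed''; by \eqref{fixthem} one has $\beta''=p\alpha$ exactly, so $|Z^*|^p\lesssim(\sqrt t+|x|)^{-p\alpha}=\omega_3$ has a constant-size prefactor with no slack in the exponent. The small constant here actually traces back to the $\delta$ in the decay assumption \eqref{decay1} on $z_*$, which propagates through \eqref{estZ*} to give $|Z^*|\lesssim\delta(\sqrt t+|x|)^{-\alpha}$ and hence $|Z^*|^p\lesssim\delta^p\omega_3$. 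The rest of the argument and the sources of smallness you invoke for the other pieces (from $R=t^\varepsilon$ and the gaps $\sigma<\sigma'<\sigma''$, $\beta'<\beta''$) are correct and match the paper's.
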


\proof
We see that
$$\begin{aligned}
 \big| N_{u^*}\Big ( \sum_{j=1}^k \mu_j^{-\frac{n-2}2} \phi_j\eta_j +\psi + Z^*\Big )  \big | \le
  \frac 1{\mu_j^{\frac{n+2}2} }|\phi_j|^p\eta_j + |\psi|^p + |Z^*|^p =:  N_1 + N_2 + N_3
 \end{aligned} $$
 We have that
 $$
 \frac 1{\mu_j^{\frac{n+2}2} }|\phi_j|^p\eta_j  \le    \frac 1{\mu_j^{\frac{n+2}2} } \|\phi_j\|_{j,\sigma} ^p
 \frac 1{1+ |y|^{4}}  \la_j^{\frac{ n-2}2}t^{-\sigma}  t^{-(p-1)\sigma} R^{pn}\la_j^2
 $$
 Assuming that $R^{pn}\la_j^2 \le t^{ (p-3)\sigma} $ we then find
 $$
 \| N_1\|_ {a , 2\sigma, \beta''}  \ \le\  \delta \|\phi_j\|_{j, \sigma}.
 $$
 Similarly,

 $$
 |N_2|\  \le \  \|\psi\|^p_{*,a, \sigma',\beta'} \, \sum_{j=2}^k\big ({\omega_{1j}^*}^p +  {\omega_{2j}^*}^p+  {\omega_{11}^*}^p  + {\omega_{3}^*}^p \big)
 $$
 Since $\beta'-2 > \frac{n-2}2 $ we have $ (\beta'-2 )p >  \beta'$.
 We may assume $p(n-\sigma-2) >  n-\sigma'' $,
 hence for some  $\gamma >0$, we may assume
 $$
{\omega_{2j}^*}^p+  {\omega_{11}^*}^p  + {\omega_{3}^*}^p   \ \le\  C t^{-\gamma}\,\big( \omega_{2j}  +   \omega_{11}   +\omega_{3}^{1+\gamma}\big)
 $$
Finally we see that for $j\ge 2$
$$
{w_{1j }^*}^p  \ \le  \  w_{1j} ( 1 + |\mu_j^{-1} (x-\xi_j)|^{2- (p-1) a}   )\la_j^2 \ \le\ w_{1j} \la_j
$$
And, as a conclusion for some numbers $\sigma'' >\sigma'$,  $\beta''>\beta'$  we  get
$$ \| N_2\|_{a, \sigma'',\beta''}  \le   \delta \| \psi \|_{*,a, \sigma',\beta'}^p   $$
Finally, using estimate \equ{estZ*} on $Z_*$ and $\beta > \frac{n-2}2$ we readily see that
$$  \| VZ^* \|_{a, \sigma'',\beta''}  + \| N_3\|_{a, \sigma'',\beta''}  \le   \delta    $$
for an arbitrarily small $\delta$.\qed

\bigskip

\bigskip\noindent
{\bf Acknowledgements:}
 M.~del Pino has been supported by a UK Royal Society Research Professorship and Grant PAI AFB-170001, Chile. M. Musso has been  partly supported by
 Fondecyt grant 1160135, Chile.  The  research  of J.~Wei is partially supported by NSERC of Canada.

\medskip


\begin{thebibliography}{AAA}


\bibitem{BG} H. Bahouri and P. Gerard, {\em High frequency approximation of solutions to critical nonlinear wave equations}.
Amer. J. Math. 121 (1999), 131-175.





\bibitem{collot4} C. Collot,
    {\em Non radial type II blow up for the energy supercritical semilinear heat equation.}
 Anal. PDE 10 (2017) 127--252.


\bibitem{cmr} C. Collot, F. Merle, P. Raphael, {\em Dynamics near the ground state for the energy critical nonlinear heat equation in large dimensions}.  Comm. Math. Phys. 352 (2017), 103-157.




\bibitem{collot1}   C. Collot, F. Merle, P. Raphael, {\em
   On strongly anisotropic type II blow up} Preprint ArXiv:1709.04941.


\bibitem{cdm}
  C. Cort\'azar, M. del Pino, M.  Musso,   {\em Green's function and infinite-time bubbling in the critical nonlinear heat equation}.   Preprint arXiv:1604.07117.
  To appear in J. Eur. Math. Soc. (JEMS).


\bibitem{Cote} R. Cote, {\em On the soliton resolution for equivariant wave maps to the sphere}, Comm. Pure.
Appl. Math. 68 (2015), no. 11, 1946-2004


\bibitem{dds}
P. Daskalopoulos, M. del Pino,  N. Sesum. {\em Type II ancient compact solutions to the Yamabe flow. }  J. Reine Angew. Math. 738 (2018), 1-71.




\bibitem{ddw}
J. D\'avila, M. del Pino, J. Wei, {\em  Singularity formation for the two-dimensional  harmonic map flow into $S^2$  } Preprint ArXiv:1702.05801.




\bibitem{dmw}
M. del Pino,  M. Musso, J. Wei, {\em  Infinite time blow-up for the 3-dimensional energy critical heat equation.} To appear in  Anal. PDE. Preprint ArXiv:1705.01672.


\bibitem{dmw1}
M. del Pino,  M. Musso, J. Wei, {\em
Geometry driven Type II higher dimensional blow-up for the critical heat equation.} Preprint Arxiv:1710.11461.





\bibitem{dmw2}
M. del Pino,  M. Musso, J. Wei, {\em
Type II blow-up in the 5-dimensional energy critical heat equation.}
To appear in {\em Acta Mathematica Sinica} (Special issue in honor of Carlos Kenig).





\bibitem{ddm} M. del Pino, J. Dolbeault, M.Musso. {\it ``Bubble-tower''
radial solutions in the slightly supercritical Brezis-Nirenberg
problem}. Journal of Differential Equations 193, no. 2 (2003),
280--306.


\bibitem{DK}
R. Donninger, J. Krieger, {\em Nonscattering solutions and blowup at infinity for the critical
wave equation}, Math. Ann. 357 (2013), no. 1, 89-163


\bibitem{dkm}
T. Duyckaerts,  C.E. Kenig, F.Merle,
{\em
Universality of blow-up profile for small radial type II blow-up solutions of the energy-critical wave equation.}
J. Eur. Math. Soc. (JEMS) 13 (2011), no. 3, 533-599.



\bibitem{dkm2}
T. Duyckaerts, C. Kenig, and F. Merle, {\em Classification of radial solutions of the focusing, energy critical
wave equation}. Cambridge Journ. of Math. 1 (2013), 75-144.


\bibitem{dkm3} T. Duyckaerts, C. Kenig, and F. Merle, {\em Profiles for bounded solutions of dispersive equations, with
applications to energy–'critical wave and Schr\"{o}dinger equations.} Commun. Pure Appl. Anal. 14 (2015),
1275-1326.


\bibitem{DJKM} T. Duyckaerts, H. Jia, C. Kenig and F. Merle, {\em
Soliton resolution along a sequence of times for the focusing energy critical wave equation},  Geometric and Functional Analysis, Vol 27, Issue 4, 2017, 798-862,

\bibitem{fk}
 M. Fila, J.R. King,  Grow up and slow decay in the critical Sobolev case. {\em Netw. Heterog. Media} 7 (2012), no. 4, 661--671.


\bibitem{fhv}
S. Filippas,  M.A. Herrero,  J.J.L. Vel\'azquez, {\em
Fast blow-up mechanisms for sign-changing solutions of a semilinear parabolic equation with critical nonlinearity.}
R. Soc. Lond. Proc. Ser. A Math. Phys. Eng. Sci. 456 (2000), no. 2004,  2957--2982.





\bibitem{F}
H. Fujita. {\em On the blowing up of solutions of the Cauchy problem for $u_t=\Delta u+u^{1+a}$.} J. Fac. Sci. Univ. Tokyo Sect. I, 13 (1966), 109--124.

\bibitem{gjp}
Y. Ge, R. Jing, F. Pacard, {\em
Bubble towers for supercritical semilinear elliptic equations.}
J. Funct. Anal. 221 (2005), no. 2, 251--302.

\bibitem{gerrard}
P. Gerard, {\em Description du d\'efaut de compacit\'e de l'injection de Sobolev.} ESAIM:
Control, Optimisation and Calculus of Variations 3 (1998), 213–-233.



\bibitem{hv}

M.A. Herrero,  J.J.L. Vel\'azquez. {\em Explosion de solutions d'equations paraboliques semilineaires supercritiques.}  [Blowup of solutions of supercritical semilinear parabolic equations]
 C. R. Acad. Sci. Paris Ser. I Math. 319 (1994), no. 2, 141--145.


\bibitem{hv1}
M.A. Herrero,  J.J.L. Vel\'azquez. {\em A blow up result for semilinear heat equations in the supercritical case}. Unpublished.

\bibitem{iv1} A. Iacopetti, G. Vaira,  {\em
Sign-changing tower of bubbles for the Brezis-Nirenberg problem.} Commun. Contemp. Math. 18 (2016), no. 1, 1550036, 53 pp.

\bibitem{iv2} A. Iacopetti, G. Vaira,  {\em  Sign-changing blowing-up solutions for the Brezis-Nirenberg problem in dimensions four and five.}
 Ann. Sc. Norm. Super. Pisa Cl. Sci. (5) 18 (2018), no. 1, 1–-38.





















\bibitem{j2} J. Jendrej, {\em
 Construction of two-bubble solutions for energy-critical wave equations.} Amer. J. Math. 141 (2019), no. 1, 55–-118.

 \bibitem{j3} J. Jendrej, A. Lawrie {\em  Two-bubble dynamics for threshold solutions to the wave maps equation.} Invent. Math. 213 (2018), no. 3,


\bibitem{j4} J. Jendrej, {\em  Nonexistence to two-bubbles with opposite signs for the radial energy-critical wave equation.}
 Ann. Sc. Norm. Super. Pisa Cl. Sci. (5) 18 (2018), no. 2, 735–-778.


\bibitem{j5} J. Jendrej, {\em  Construction of two-bubble solutions for the energy-critical NLS.} Anal. PDE 10 (2017), no. 8, 1923–-1959.









\bibitem{kenigmerle}
C. E. Kenig and F. Merle,
{\em  Global well-posedness, scattering and blow-up for the energy-critical focusing non-linear wave equation.} Acta Math. 201, 147--212 (2008)


\bibitem{KS1} J. Krieger, W. Schlag, Concentration compactness for critical wave maps, Monographs of
the European Mathematical Society (2012).

\bibitem{KS2} J. Krieger, W. Schlag, {\em Full range of blow up exponents for the quintic wave equation in
three dimension}, J. Math. Pures Appl. (9) 101 (2014), no.6, 873--900

\bibitem{kst}
J. Krieger, W. Schlag, D. Tataru, {\em  Slow blow-up solutions for the $H^{(1)}(\R^3)$ critical focusing semilinear wave equation.} Duke Math. J. 147 (2009), no. 1, 1�53.

\bibitem{KNS}  J. Krieger, K. Nakanishi and W. Schlag,  {\em Global dynamics away from the ground state for the energy-critical nonlinear wave equation}. Amer. J. Math. 135 (2013), no. 4, 935–-965.


\bibitem{LW} F.H. Lin, C.Y. Wang, The analysis of harmonic maps and their heat flows. World Scientific
Publishing Co. Pte. Ltd., Hackensack, NJ, 2008.

\bibitem{merle}
F. Merle. {\em Personal communication.}


\bibitem{MR}
F. Merle and P. Raphael. {\em The blow-up dynamic and upper bound on the blow-up rate for
critical nonlinear Schr\"{o}dinger equation}, Ann. of Math. 161 (2005), no. 1, 157-222.


















\bibitem{mm1}
H. Matano, F. Merle. {\em On nonexistence of type II blowup for a supercritical nonlinear heat equation,} Comm. Pure Appl. Math. 57 (2004) 1494-1541.








\bibitem{mm2}
H. Matano, F. Merle. {\em
Threshold and generic type I behaviors for a supercritical nonlinear heat equation.} J. Funct. Anal. 261 (2011), no. 3, 716--748.








 \bibitem{MS} Mizoguchi, N. and Souplet, P. {\em Optimal condition for blow-up of the critical $L^q$ norm for the
semilinear heat equation}, arXiv:1812.11352

\bibitem{mp} M. Musso, A.Pistoia. {\it Tower of Bubbles for almost critical problems in general domains}. Journal de Mathematiques Pures et Appliquees (9) 93 (2010), 1-30.

\bibitem{PY1} P. Polacik, E. Yanagida, {\em On bounded and unbounded global solutions of a supercritical semilinear heat equation},
Math. Annalen 327 (2003), 745-771.


\bibitem{PY2} P. Polacik, E. Yanagida, {\em  Global unbounded solutions of the Fujita equation in the intermediate range}. Math.
Ann. 360 (2014), no. 1-2, 255-266.


\bibitem{qs}
P. Quittner,  Ph. Souplet. {\em Superlinear parabolic problems. Blow-
up, global existence and steady states.} Birkhauser Advanced Texts.
Birkhauser, Basel, 2007.


\bibitem{RR} P. Raphael, I. Rodnianksi, {\em  Stable blow up dynamics for the critical co-rotational wave maps and equivariant YangMills problems.} Publ. Math. Inst. Hautes Etudes Sci. 115 (2012), 1-122.











\bibitem{Schuur}  P. C. Schuur, Asymptotic analysis of soliton problems. An inverse scattering approach,
Lecture Notes in Mathematics, vol. 1232, Springer-Verlag, Berlin, 1986.


\bibitem{schweyer}
R. Schweyer. {\em Type II blow-up for the four dimensional energy critical semi linear heat equation.} J. Funct. Anal. 263 (2012), no. 12, 3922-3983.

\bibitem{struwe}
M. Struwe.  {\em A global compactness result for elliptic boundary value problems involving
limiting nonlinearities.} Math. Z. 187 (1984), 511-517..


 \bibitem{Tao1} T. Tao, {\em A (concentration-) compact attractor for high-dimensional non-linear schr\"{o}dinger
equations}, Dyn. Partial Differ. Equ. 4 (2007), no. 1, 1-53.







\bibitem{Topping} P. M. Topping, {\em Repulsion and quantization in almost-harmonic maps, and asymptotics of the harmonic map flow}.
Annals of Math. 159 (2004), 465-534.

\bibitem{vander} R. van der Hout, {\em On the nonexistence of finite time bubble trees in symmetric harmonic map heat flows from the
disk to the 2-sphere}. J. Differential Equations 192 (2003), no. 1, 188-201.






\end{thebibliography}
\end{document}